\documentclass{article}
\usepackage[a4paper, total={6.5in, 9.5in}]{geometry}
\let\originalleft\left
\let\originalright\right
\renewcommand{\left}{\mathopen{}\mathclose\bgroup\originalleft}
\renewcommand{\right}{\aftergroup\egroup\originalright}
\usepackage{amsfonts}
\usepackage{amsmath}
\DeclareMathOperator{\Tr}{Tr}
\usepackage{amsthm}
\usepackage{caption}
\usepackage{mathtools}
\usepackage{subcaption}
\newtheorem{theorem}{Theorem}[section]
\newtheorem{remark}[theorem]{Remark}
\newcommand{\footremember}[2]{\footnote{#2}\newcounter{#1}\setcounter{#1}{\value{footnote}}}
\newcommand{\footrecall}[1]{\footnotemark[\value{#1}]}

\begin{document}
\title{A Stochastic Model of Intracellular Calcium Concentrations}
\author{Tom{\'a}s Caraballo\footremember{1}{Department of Mathematics, Wenzhou University, Wenzhou, Zhejiang Province, Wenzhou, 325035, P.R.China.} \footremember{2}{Departamento de Ecuaciones Diferenciales y An{\'a}lisis Num{\'e}rico, Universidad de Sevilla, c/ Tarfia s/n, Seville, 41012, Spain.} \and Macarena G{\'o}mez-M{\'a}rmol\footrecall{2} \and Ignacio Rold{\'a}n\footrecall{2} \footremember{3}{Corresponding author (iroldan@us.es).}}
\date{\today}
\maketitle
\begin{abstract}
In this paper, we study a slow-fast model of Intracellular Calcium Concentrations (ICC) within a stochastic framework, where stochastic perturbations are introduced to capture the intrinsic randomness of such biological systems. We first analyse the three-dimensional single-cell model, based on the FitzHugh-Nagumo structure, and subsequently extend the analysis to a coupled two-cell six-dimensional setting. We prove existence and uniqueness of solutions and establish positivity of the calcium variable. We obtain explicit mean-square stability estimates around deterministic equilibria and quantify how they depend on the noise intensity. Numerical simulations for both the single-cell and coupled systems illustrate relevant qualitative behaviours and reveal additional dynamical features arising in the stochastic framework.
\end{abstract}
\paragraph{Subjclass:} 60H10, 65C30, 92C20.
\paragraph{Keywords:} Intracellular Calcium Concentration, Stochastic Differential Equations, Numerical Analysis.

\section*{Introduction}
At present, the scientific community is increasingly concerned with modelling realistic phenomena through differential equations. Many real-world processes, such as population dynamics, chemical reactions, and neuronal activity, can be described within this framework. In particular, in neuroscience, variability is an intrinsic feature of the dynamics and cannot be fully understood within purely deterministic models; see \cite{Faisal2010, Laing2010}. This naturally motivates the incorporation of stochastic effects into the modelling process.

A classical way to introduce randomness into deterministic systems is through the addition of white noise, leading to stochastic differential equations (SDEs). Since the foundational works of It{\^o} \cite{Ito1951_1, Ito1951_2}, SDEs have become a standard tool for modelling systems subject to random perturbations; see for example \cite{Hairer2010, Lord2014}. Existence and uniqueness results for SDEs can be found in classical references such as \cite{Karatzas1991, ksendal2003}. In contrast to ordinary differential equations, solutions of SDEs are stochastic processes, and their analysis requires probabilistic tools and suitable notions of stochastic convergence.

Other types of perturbations of excitable slow-fast systems may also be considered in order to obtain more realistic descriptions. For instance, the introduction of time delay or random perturbations within a deterministic framework has been investigated in \cite{Rold2026}. In that setting, randomness is incorporated through random differential equations, whereas in the present work we adopt a stochastic formulation based on stochastic differential equations, which leads to substantially different analytical and numerical challenges.

From the numerical point of view, the approximation of SDEs differs substantially from the deterministic case due to stochastic integrals and stochastic differential terms. Classical methods include the Euler-Maruyama and Milstein schemes \cite{Cambanis1996, Maruyama1955}, as well as higher-order integrators based on the It{\^o}--Taylor expansion \cite{Ewald2005}, see also \cite{Milstein2004, Ruemelin1982}. These methods provide satisfactory results whenever the underlying problems are not stiff. However, for stiff problems, as in the deterministic setting, higher-order methods with good stability properties become necessary. Constructing strong order two methods requires additional analysis, since stochastic orders are often reduced compared to their deterministic counterparts \cite{Kloeden1992, Mao2007, Talay1990}.

Slow-fast systems form a particular class of stiff problems; see, for instance, \cite{Hairer2010}. In the stochastic setting, time-scale separation combined with noise may modify stability properties and lead to qualitatively different dynamical behaviour; see, for instance, \cite{Kuehn2015}. It is therefore necessary to analyse the behaviour of their stochastic solutions, distinguishing between cases with stable and unstable equilibria; see \cite{Berglund2003, Berglund2006, Berglund2010, Berglund2012_1, Galtier2012}. Even when the deterministic system possesses a globally asymptotically stable equilibrium, stochastic perturbations may lead to solutions fluctuating around that equilibrium in an average sense; see representative asymptotic results in \cite{Berrhazi2018, Caraballo2018}.

A well-known deterministic model for excitable systems is the FitzHugh-Nagumo model, a simplified version of the Hodgkin-Huxley system; see for example \cite{Guckenheimer2009, Hastings1975, Roc2000}. It describes the behaviour of excitable cells such as neurons. A partial differential equation version is studied in \cite{Carter2015}. Stochastic perturbations of FitzHugh-Nagumo-type systems have been widely investigated, including models with additive noise \cite{Bonaccorsi2008, Yamakou2019}, moment analysis \cite{Tuckwell1998}, stochastic partial differential equation formulations \cite{Eichinger2022, Tuckwell2008}, and models with different noise sources acting on distinct variables \cite{Berglund2012_2}.

Based on the FitzHugh-Nagumo structure, a three-dimensional slow-fast model for Intracellular Calcium Concentrations (ICC) in single neurons was introduced in \cite{Krupa2013}. This model extends the classical excitable dynamics by incorporating an additional variable describing calcium regulation while preserving the slow-fast structure. In \cite{Bandera2022, Bandera2026, Fern2020} can be found deterministic extensions and numerical studies, including coupled-cell variants.

The aim of this paper is to study stochastic versions of the ICC model obtained by adding white-noise perturbations that preserve its slow-fast structure. We first analyse the three-dimensional single-cell model and then extend the analysis to a coupled two-cell six-dimensional system. We establish existence and uniqueness of solutions, analyse qualitative properties of the stochastic system, including positivity of the calcium variable and its behaviour near deterministic equilibria, and investigate numerically the impact of stochastic forcing on the dynamics.

To deal with stiffness and multiscale effects, we employ a stochastic version of the TR-BDF2 method, whose deterministic counterpart was originally introduced and analysed in \cite{Bank1985, Hairer2010, Hosea1996}. The stochastic extension developed in \cite{Caraballo2026} attains strong order two while preserving the favourable stability features of the deterministic method. The numerical implementation follows standard MATLAB-based approaches for stiff problems \cite{Alzubaidi2010, Laing2010}.

The remainder of the paper is organised as follows. Section~\ref{sec2} presents the deterministic ICC model and recalls its main properties. Section~\ref{sec3} introduces the stochastic formulation and establishes the main theoretical results concerning well-posedness, positivity, and asymptotic behaviour of the stochastic solutions. Section~\ref{sec4} is devoted to numerical simulations illustrating the theoretical findings and highlighting qualitative behaviours induced by stochastic perturbations. Finally, Section~\ref{sec5} extends the analysis to the coupled two-cell system and illustrates the new dynamical features induced by stochastic perturbations.

\section{Problem Formulation}\label{sec2}
In this section we recall the single-cell deterministic ICC model (three-dimensional), which will serve as the reference configuration for the stochastic analysis carried out later. This single-cell model will also provide the building block for the coupled two-cell (six-dimensional) extension introduced in a subsequent section. We also introduce the notation used throughout the paper.

The ICC model is a slow-fast system based on the well-known FitzHugh-Nagumo model (FHN). Following \cite{Krupa2013}, the model reads:
\begin{equation}
\left\{\begin{array}{l}
\displaystyle \dot{x} = \tau\left(b_{0}y + b_{1}x - x^{3} - \phi_{f}\left(z\right)\right),\\
\displaystyle \dot{y} = \tau\varepsilon k\left(a_{0}x + a_{1}y + a_{2}\right),\\
\displaystyle \dot{z} = \tau\varepsilon\left(\phi_{r}\left(x\right) - \frac{z - z_{b}}{\tau_{z}}\right),\\
\displaystyle \left(x\left(0\right), y\left(0\right), z\left(0\right)\right)^{\top} \text{ is given},
\end{array}\right.
\label{ICC}
\end{equation}
\noindent where
\begin{equation*}
\begin{array}{ll}
\displaystyle \phi_{f}\left(z\right) = \frac{\mu z}{z + z_{0}}, & \displaystyle \phi_{r}\left(x\right) = \frac{\lambda}{1 + \exp\left(- \rho\left(x - x_{on}\right)\right)}.
\end{array}
\end{equation*}

Here, the overdot stands for the derivative with respect to $t$, and $T > 0$ is the final time, i.e., $t \in \left[0, T\right]$. Parameter $\tau > 0$ is introduced in \cite{Krupa2013} and it rescales the time interval. Parameter $\varepsilon$ is taken as $0 < \varepsilon \ll 1$ and therefore it separates the timescales between the fast variable $x$ and the slow ones $y$ and $z$. Variable $x$ represents the cell electrical activity, variable $y$ represents recovery and variable $z$, the intracellular calcium level, acts as a feedback onto the dynamics of $x$ through the Hill function $\phi_{f}\left(z\right)$, which is bounded by $\mu > 0$. Parameters $z_{0}, k, \lambda, \rho, x_{on}$ and $\tau_{z}$ are assumed to be strictly positive, with $k$ being a parameter that may introduce heterogeneity in multi-cell settings. In the single-cell model considered here, we set $k = 1$. The dynamics of variable $z$ is mostly given by $x$ through the sigmoidal function $\phi_{r}\left(x\right)$, which takes values between 0 and $\lambda$. When $\phi_{r}\left(x\right)$ is inactive (close to 0), $z$ decreases to a quasi steady state close to the baseline of the intracellular calcium level, represented by $z_{b} > 0$. This occurs with an exponential decay rate, determined by the ratio $\frac{\tau\varepsilon}{\tau_{z}}$.

On the other hand, parameters $b_{1} > 0$ and $b_{0} < 0$ determine the cubic structure of the $x$-nullcline, while $a_{0} > 0$ (typically 1), $- 1 \ll a_{1} < 0$ and $a_{2} > 0$ define the $y$-nullcline. This cubic function admits a local minimum at $x = - x_{f} < 0$ and a local maximum at $x = x_{f} > 0$. We take $b_{1} = 4$ and $b_{0} = - 1$, as in \cite{Bandera2022, Krupa2013}.

Note that we are interested in the selection of parameters that yields a unique equilibrium $\left(x^{\ast}, y^{\ast}, z^{\ast}\right)$ of the ICC model \eqref{ICC}. From now on we will suppose that we are in this framework. Therefore, we keep the values used in \cite{Bandera2022}:
\begin{equation*}
\begin{array}{lllllll}
\displaystyle \tau = 37, & \displaystyle b_{0} = - 1, & \displaystyle b_{1} = 4, & \displaystyle z_{0} = 5, & \displaystyle \varepsilon = 0.06, & \displaystyle k = 1, & \displaystyle a_{0} = 1,\\
\displaystyle a_{1} = - 0.1, & \displaystyle a_{2} = 0.8, & \displaystyle \lambda = 1.75, & \displaystyle \rho = 4.5, & \displaystyle x_{on} = - 0.45, & \displaystyle z_{b} = 1, & \displaystyle \tau_{z} = 2,
\end{array}
\end{equation*}
\noindent and take $\mu \in \left\{2.21, 2.4, 2.5\right\}$ and the final time $T = 70$, as in \cite{Fern2020}.

Then, to describe the behavior of the solution, we define the $x$-nullcline
\begin{equation*}
S \equiv \left\{- b_{0}y = b_{1}x - x^{3} - \phi_{f}\left(z\right)\right\},
\end{equation*}
\noindent which has two fold curves
\begin{equation*}
\Gamma^{-} \equiv \left\{x = - x_{f}, - b_{0}y = - b_{1}x_{f} + x_{f}^{3} - \phi_{f}\left(z\right)\right\},
\end{equation*}
\begin{equation*}
\Gamma^{+} \equiv \left\{x = x_{f}, - b_{0}y = b_{1}x_{f} - x_{f}^{3} - \phi_{f}\left(z\right)\right\}.
\end{equation*}

Also, we shall denote $S_{l}$ as the sheet of $S$ that lies to the left of $\Gamma^{-}$, $S_{r}$ as the sheet of $S$ that lies to the right of $\Gamma^{+}$ and $S_{m}$ as the sheet of $S$ that lies in the middle of them, i.e.,
\begin{equation*}
S_{l} \equiv \left\{x < - x_{f}, - b_{0}y = b_{1}x - x^{3} - \phi_{f}\left(z\right)\right\},
\end{equation*}
\begin{equation*}
S_{r} \equiv \left\{x > x_{f}, - b_{0}y = b_{1}x - x^{3} - \phi_{f}\left(z\right)\right\},
\end{equation*}
\begin{equation*}
S_{m} \equiv \left\{x \in \left(- x_{f}, x_{f}\right), - b_{0}y = b_{1}x - x^{3} - \phi_{f}\left(z\right)\right\}.
\end{equation*}

Depending on the position of the unique equilibrium point of \eqref{ICC}, three main behaviors of the solution can be identified, depending on $\mu$ (see \cite{Krupa2013}):

\begin{enumerate}
\item The equilibrium point lies on $S_{l}$ or $S_{r}$: then, the equilibrium is stable and, after the solution reaches the sheet where the equilibrium point is, it remains permanently in the steady state, in the vicinity of the equilibrium. This case corresponds to $\mu > 2.45$ (Figure~\ref{Fig1a}, with $\mu = 2.5$).

\item The equilibrium point lies on $S_{m}$ and far from the fold curves $\Gamma^{-}$ and $\Gamma^{+}$: then, the equilibrium point is unstable and there exists an attractive relaxation limit cycle. This case corresponds to $\mu < 2.26$ (Figure~\ref{Fig1b}, with $\mu = 2.21$).

\item The equilibrium point lies on $S_{m}$ but close to a fold curve: then, the equilibrium point is unstable but the system exhibits Mixed-Mode Oscillations (MMOs). This case corresponds to $\mu \in \left[2.26, 2.45\right]$ (Figure~\ref{Fig1c}, with $\mu = 2.4$).
\end{enumerate}

\begin{figure}[ht]
\centering
\begin{subfigure}[ht]{0.45\textwidth}
\centering
\includegraphics[scale=0.30]{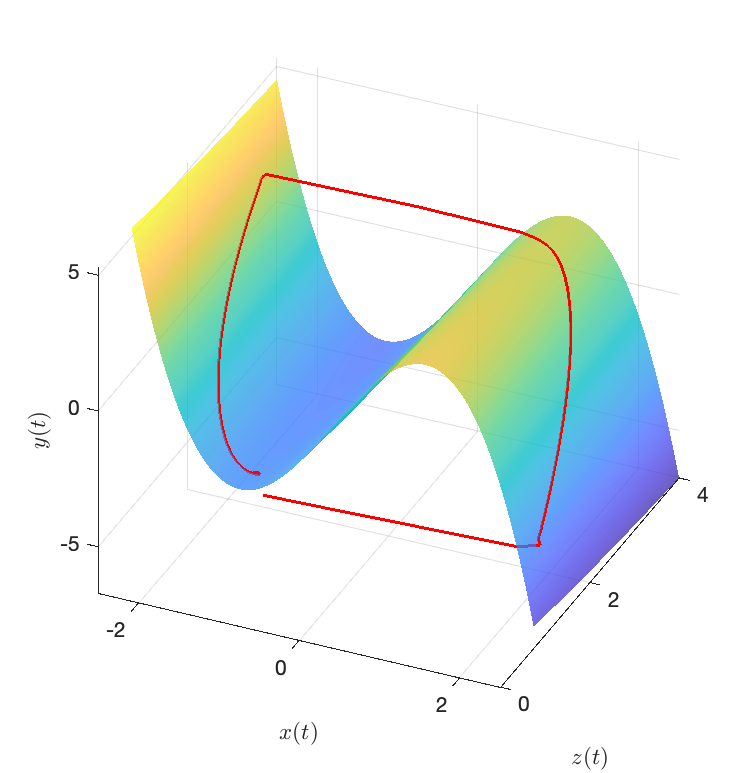}
\caption{Stable equilibrium, attractive case.}\label{Fig1a}
\end{subfigure}
\hfill
\begin{subfigure}[ht]{0.45\textwidth}
\centering
\includegraphics[scale=0.30]{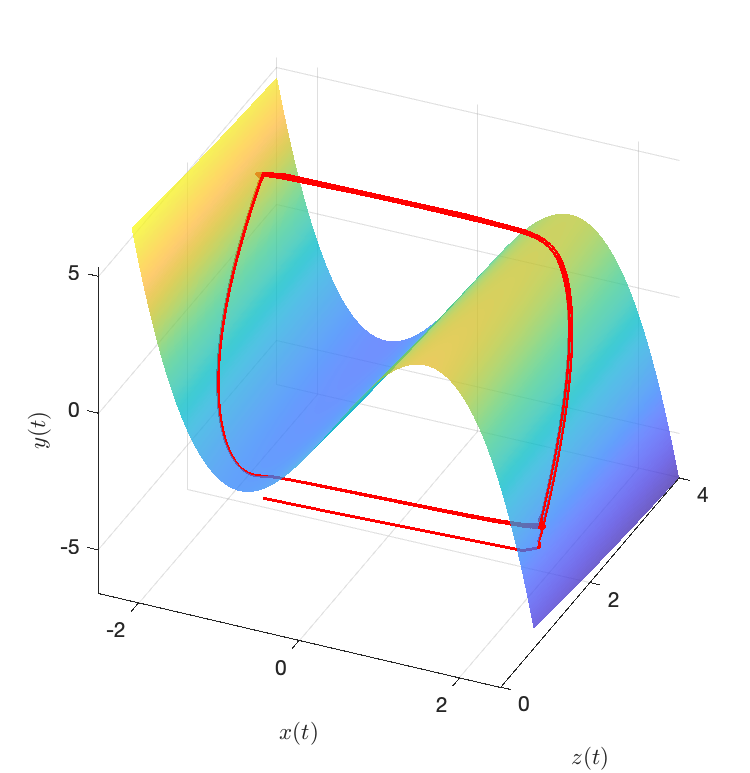}
\caption{Unstable point, relaxation limit cycle.}\label{Fig1b}
\end{subfigure}
\end{figure}
\begin{figure}\ContinuedFloat
\begin{subfigure}[ht]{0.95\textwidth}
\centering
\begin{subfigure}[ht]{0.45\textwidth}
\centering
\includegraphics[scale=0.30]{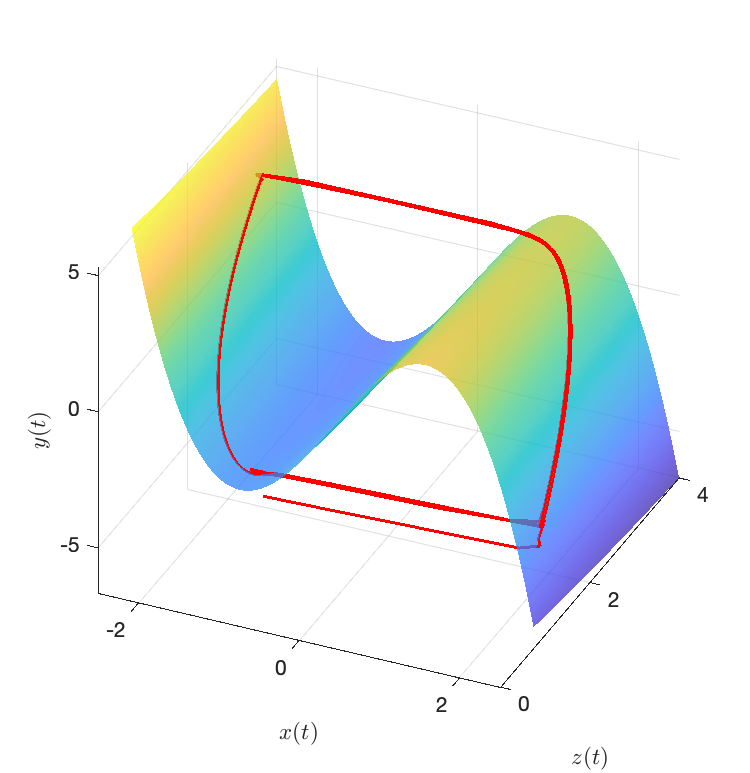}
\end{subfigure}
\hfill
\begin{subfigure}[ht]{0.45\textwidth}
\centering
\includegraphics[scale=0.30]{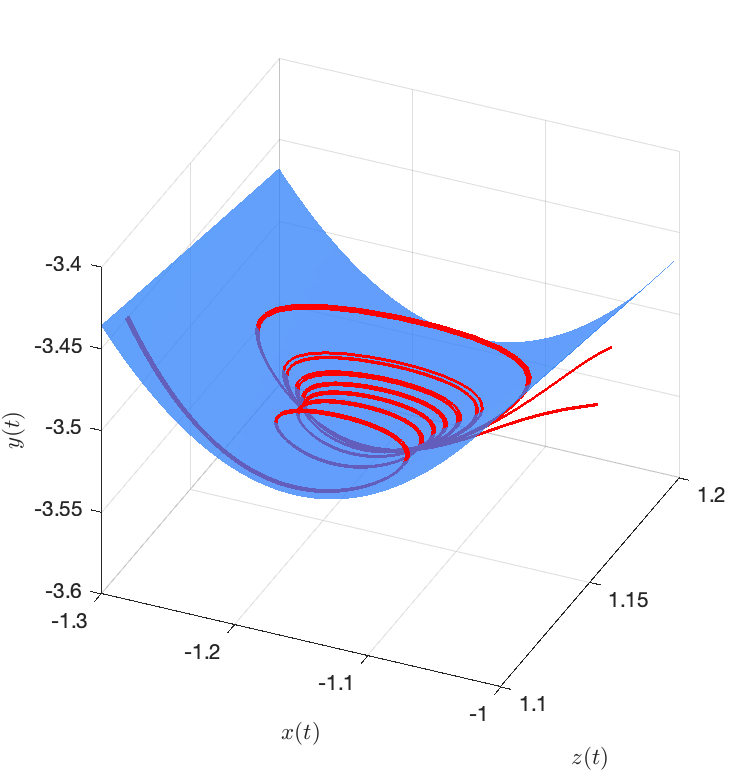}
\end{subfigure}
\caption{Unstable point, MMOs. Left: periodic orbit. Right: magnified view of the small oscillations of the orbit.}\label{Fig1c}
\end{subfigure}
\caption{Asymptotic behavior of the solution of model \eqref{ICC}.}
\end{figure}

\begin{figure}[ht]
\centering
\begin{subfigure}[ht]{0.3\textwidth}
\centering
\includegraphics[scale=0.28]{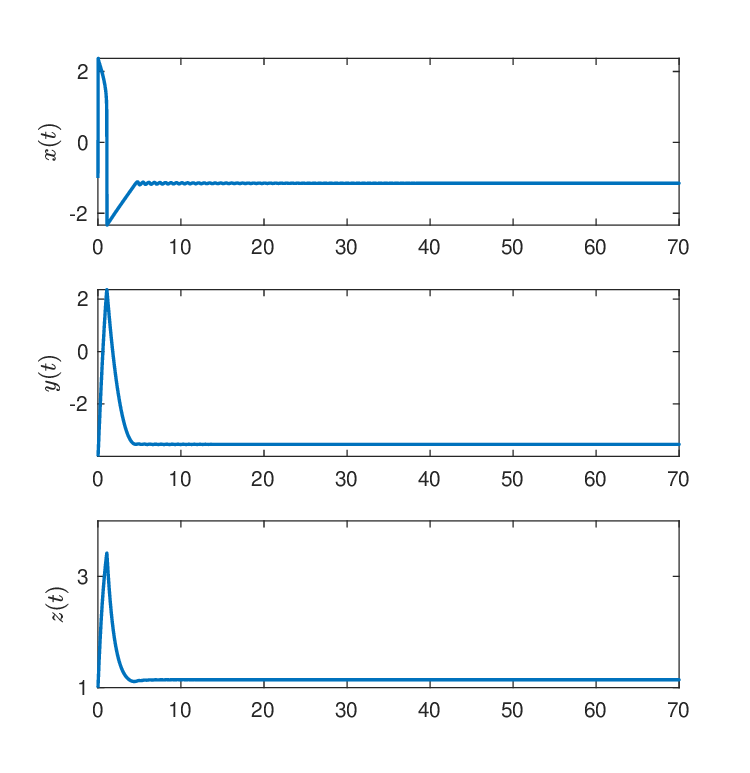}
\caption{Stable equilibrium ($\mu = 2.5$): no oscillations.}
\end{subfigure}
\hfill
\begin{subfigure}[ht]{0.3\textwidth}
\centering
\includegraphics[scale=0.28]{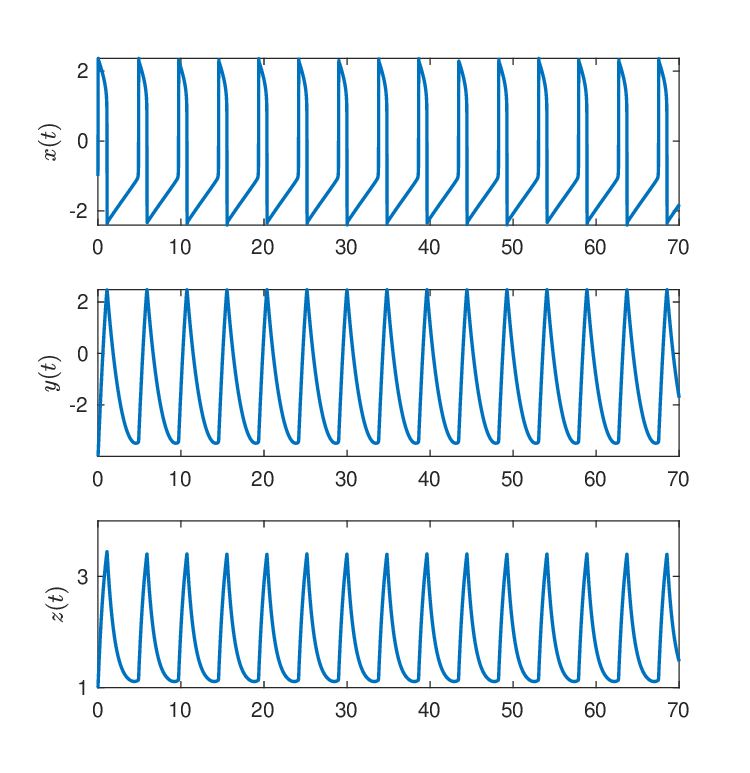}
\caption{Relaxation oscillations on a stable limit cycle ($\mu = 2.21$).}
\end{subfigure}
\hfill
\begin{subfigure}[ht]{0.3\textwidth}
\centering
\includegraphics[scale=0.28]{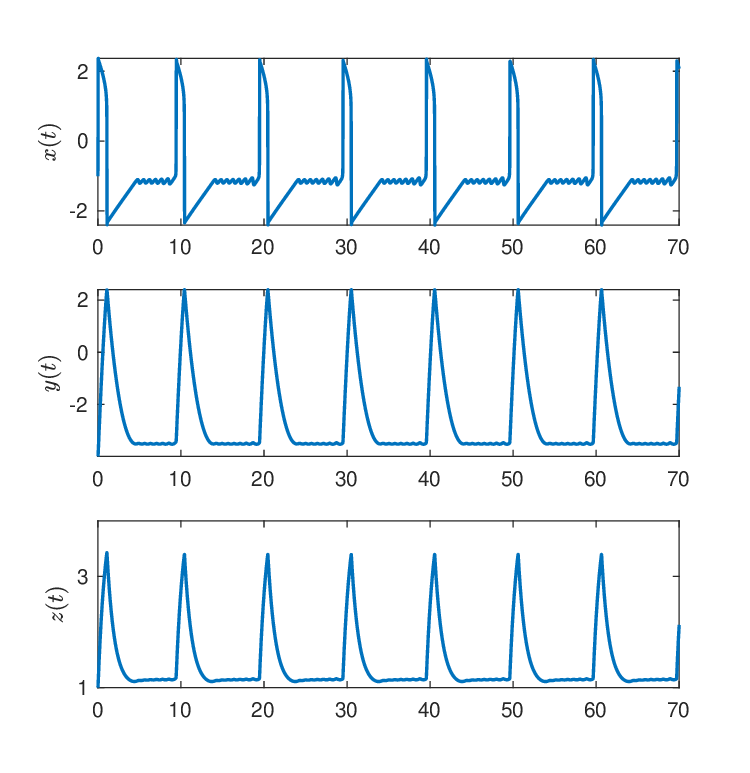}
\caption{Periodic Mixed-Mode Oscillations ($\mu = 2.4$).}
\end{subfigure}
\caption{Time evolution of $\left(x, y, z\right)^{\top}$ in the three deterministic regimes of model \eqref{ICC}.}\label{Fig1c2}
\end{figure}

We also display the time evolution of the variables $\left(x, y, z\right)^{\top}$ in the three regimes.

To conclude this section, we state the following inequality, which will be employed multiple times in the subsequent analysis:
\begin{equation}
\forall u, v \in \mathbb{R}, \forall \delta > 0, - \frac{1}{2\delta}u^{2} - \frac{\delta}{2}v^{2} \leq uv \leq \frac{1}{2\delta}u^{2} + \frac{\delta}{2}v^{2}.
\label{ine}
\end{equation}

We will later extend both the deterministic and stochastic formulations to a coupled two-cell (six-dimensional) ICC model.

\section{Stochastic Framework}\label{sec3}
In this section we introduce the stochastic framework used throughout the paper and formulate a stochastic version of the ICC model. We begin by recalling basic results on stochastic differential equations that will be required in the analysis. We then propose a stochastic perturbation of the deterministic ICC system and establish its well-posedness. Finally, we investigate the asymptotic behaviour of the stochastic solution in the case where the deterministic model possesses a stable equilibrium.

\subsection{Stochastic Setup}
Throughout this paper, we assume the usual conditions of the stochastic framework, see for example \cite{Kloeden1992}. Let $\left(\Omega, \mathcal{F}, \mathbb{P}\right)$ be a complete probability space with the natural filtration $\left\{\mathcal{F}_{t}\right\}_{t \geq 0}$. Consider a $d$-dimensional It{\^o} stochastic differential equation (SDE)
\begin{equation}
\left\{\begin{array}{ll}
\displaystyle \mathrm{d}U_{t} = f\left(t, U_{t}\right)\mathrm{d}t + g\left(t, U_{t}\right)\mathrm{d}W_{t}, & \displaystyle t \in \left[0, T\right],\\
\displaystyle U_{0} \text{ is given}, & \displaystyle
\end{array}\right.
\label{sde}
\end{equation}
\noindent where $T$ is a positive constant, the drift $f : \left[0, T\right] \times \mathbb{R}^{d} \to \mathbb{R}^{d}$ and the diffusion $g : \left[0, T\right] \times \mathbb{R}^{d} \to \mathbb{R}^{d \times m}$, and $U_{0}$ is the initial data. The process $W_{t}$ is a $m$-dimensional Wiener process adapted to $\left\{\mathcal{F}_{t}\right\}_{t \geq 0}$, and the initial data $U_{0}$ is independent of it, satisfying $\mathbb{E}\left[\left\|U_{0}\right\|^{2}\right] < \infty$, where $\left\|\cdot\right\|$ denotes the Euclidean norm for vectors and the Frobenius norm for matrices.

Let $\mathcal{M}^{2}\left(\left[0, T\right]; \mathbb{R}^{d}\right)$ denote the family of all $d$-valued measurable $\left\{\mathcal{F}_{t}\right\}$-adapted processes $X_{t}$ such that
\begin{equation*}
\mathbb{E}\left[\int_{0}^{T}\left\|X_{s}\right\|^{2}\mathrm{d}s\right] < \infty.
\end{equation*}

We now present an existence and uniqueness result for the problem \eqref{sde} under some regularity hypotheses on the data:

\begin{theorem}\label{eu}
Suppose that

\item[1. Measurability.] The functions $f$ and $g$ are jointly measurable in $\left(t, x\right)$.

\item[2. Local Lipschitz continuity.] For every integer $n \geq 1$, there exists $L_{n} > 0$ such that, for all $x, y \in \mathbb{R}^{d}$ and $t \in \left[0, T\right]$, with $\left\|x\right\| \leq n$ and $\left\|y\right\| \leq n$,
\begin{equation*}
\left\|f\left(t, x\right) - f\left(t, y\right)\right\| + \left\|g\left(t, x\right) - g\left(t, y\right)\right\| \leq L_{n}\left\|x - y\right\|.
\end{equation*}

\item[3. Growth bound.] There exists $K > 0$ such that, for all $x \in \mathbb{R}^{d}$ and $t \in \left[0, T\right]$,
\begin{equation*}
x^{\top}f\left(t, x\right) + \frac{1}{2}\left\|g\left(t, x\right)\right\|^{2} \leq K\left(1 + \left\|x\right\|^{2}\right).
\end{equation*}

Then, the SDE \eqref{sde} admits a unique solution $U_{t} \in \mathcal{M}^{2}\left(\left[0, T\right]; \mathbb{R}^{d}\right)$.
\end{theorem}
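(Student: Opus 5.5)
The plan is the classical localisation argument (Khasminskii/Mao): solve a globally Lipschitz truncated problem, then remove the truncation using the one-sided growth bound in hypothesis 3.

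\textbf{Step 1: truncation.} For each integer $n \geq 1$ define $f_{n}(t,x) = f\bigl(t, \pi_{n}(x)\bigr)$ and $g_{n}(t,x) = g\bigl(t, \pi_{n}(x)\bigr)$, where $\pi_{n}(x) = x$ if $\|x\| \leq n$ and $\pi_{n}(x) = n x/\|x\|$ otherwise. Since $\pi_{n}$ is $1$-Lipschitz, hypothesis 2 makes $f_{n}, g_{n}$ globally Lipschitz with constant $L_{n}$. They are also bounded, provided $\sup_{t}\|f(t,0)\|$ and $\sup_{t}\|g(t,0)\|$ are finite; I take this implicitly, as it holds for the autonomous drift and diffusion used later. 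The standard Picard iteration in $\mathcal{M}^{2}$ then gives a unique continuous adapted solution $U^{n}_{t}$ of the truncated SDE. This uses the Itô isometry, Doob's maximal inequality and Gronwall's inequality.

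\textbf{Step 2: consistency.} Let $\tau_{n} = \inf\{t \in [0,T] : \|U^{n}_{t}\| \geq n\}$. For $m > n$ the truncated coefficients agree on the ball of radius $n$. Running the uniqueness argument up to $\tau_{n} \wedge t$ with the local constant $L_{n}$ shows $U^{m}_{t} = U^{n}_{t}$ for $t \leq \tau_{n}$ almost surely. Hence $\tau_{n}$ is nondecreasing, and we may set $U_{t} = U^{n}_{t}$ on $\{t \leq \tau_{n}\}$. This defines a solution up to the explosion time $\tau_{\infty} = \lim_{n}\tau_{n}$. The same stopping argument gives pathwise uniqueness: any two solutions coincide up to their first exit from each ball.

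\textbf{Step 3: non-explosion.} This is the main obstacle, and it is where hypothesis 3 enters. Apply Itô's formula to $V(x) = 1 + \|x\|^{2}$ along $U^{n}$ up to $t \wedge \tau_{n}$:
\begin{equation*}
\mathrm{d}V = \bigl(2U^{\top}f + \|g\|^{2}\bigr)\mathrm{d}t + 2U^{\top}g\,\mathrm{d}W \leq 2K\,V\,\mathrm{d}t + \mathrm{d}M_{t}.
\end{equation*}
Before $\tau_{n}$ the process stays in the ball of radius $n$, so the stopped local martingale $M$ is a true martingale and has zero expectation. Gronwall's inequality then gives
\begin{equation*}
\mathbb{E}\bigl[1 + \|U_{t \wedge \tau_{n}}\|^{2}\bigr] \leq \bigl(1 + \mathbb{E}\|U_{0}\|^{2}\bigr)e^{2KT}.
\end{equation*}
On $\{\tau_{n} \leq T\}$ we have $\|U_{\tau_{n}}\| = n$ by continuity, except in the trivial case $\|U_{0}\| \geq n$, whose probability tends to $0$. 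Therefore
\begin{equation*}
n^{2}\,\mathbb{P}(\tau_{n} \leq T) \leq \bigl(1 + \mathbb{E}\|U_{0}\|^{2}\bigr)e^{2KT},
\end{equation*}
so $\mathbb{P}(\tau_{\infty} \leq T) = 0$ and the solution is global on $[0,T]$.

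\textbf{Step 4: square integrability.} Letting $n \to \infty$ in the stopped bound and applying Fatou's lemma gives $\sup_{t \leq T}\mathbb{E}\|U_{t}\|^{2} < \infty$. Integrating in time yields $U \in \mathcal{M}^{2}([0,T];\mathbb{R}^{d})$. Uniqueness in this class follows from Step 2.
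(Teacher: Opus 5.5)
Your proposal is correct and is essentially the standard truncation argument for local Lipschitz plus monotone coefficients from Mao's monograph, which is all the paper relies on: it gives no proof of its own and simply cites \cite{Mao2007}. The extra assumption you make, that $\sup_{t}\|f(t,0)\|$ and $\sup_{t}\|g(t,0)\|$ are finite, is not needed, since hypothesis 3 at $x=0$ gives $\|g(t,0)\|^{2}\leq 2K$, and hypotheses 2 and 3 at $x=v$ with $v = f(t,0)/\|f(t,0)\|$ (when $f(t,0)\neq 0$) give $\|f(t,0)\| \leq v^{\top}f(t,v)+L_{1}\leq 2K+L_{1}$, so your truncated coefficients are bounded for general non-autonomous $f$ and $g$ as well.
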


The proof of this result can be found in \cite{Mao2007}.

We now state It{\^o}'s formula for a scalar function. Let $V \in \mathcal{C}^{1, 2}$ be a function which is continuously differentiable with respect to the first variable and twice continuously differentiable with respect to the second one. Then, the function $V\left(t, U_{t}\right)$, where $U_{t}$ is the solution of \eqref{sde}, satisfies the following SDE:
\begin{equation}
\mathrm{d}V\left(t, U_{t}\right) = LV\left(t, U_{t}\right)\mathrm{d}t + \nabla V^{\top}g\left(t, U_{t}\right)\mathrm{d}W_{t},
\label{Ito}
\end{equation}
\noindent where we define the operator $L$ acting on $V$ as
\begin{equation*}
LV = \frac{\partial V}{\partial t} + \nabla V^{\top}f + \frac{1}{2}\Tr\left(g^{\top}Hg\right),
\end{equation*}
\noindent where $\nabla V$ is the gradient of $V$ with respect to the second variable, $H$ is its Hessian matrix with respect to the second variable and $f$ and $g$ are the functions of \eqref{sde}.

\subsection{Stochastic ICC model}
In this subsection we introduce a stochastic version of the ICC model and establish its main theoretical properties. We first show that the third variable remains positive, consistently with its interpretation as a concentration. We then prove existence and uniqueness of solutions to the stochastic system. Finally, we analyse the asymptotic behaviour of the stochastic solution in the case where the deterministic model admits a unique stable equilibrium.

To make model \eqref{ICC} more realistic, it is natural to incorporate randomness in the parameters corresponding to physiological properties of the cells in their respective equations. Since the first variable represents the cell electrical activity and the second describes recovery dynamics, it is natural to incorporate randomness into their respective equations. In contrast, the third variable is mainly driven by the first one and models a concentration process; therefore, no diffusion term is included in its equation.

Let $\xi_{t} = \left(X_{t}, Y_{t}, Z_{t}\right)^{\top}$. We consider the following stochastic differential equation:
\begin{equation}
\left\{\begin{array}{ll}
\displaystyle \mathrm{d}X_{t} = \tau\left(b_{0}Y_{t} + b_{1}X_{t} - X_{t}^{3} - \phi_{f}\left(Z_{t}\right)\right)\mathrm{d}t + g_{1}\left(\xi_{t}\right)\mathrm{d}W_{t}^{1} & \displaystyle = f_{1}\left(\xi_{t}\right)\mathrm{d}t + g_{1}\left(\xi_{t}\right)\mathrm{d}W_{t}^{1},\\
\displaystyle \mathrm{d}Y_{t} = \tau\varepsilon k\left(a_{0}X_{t} + a_{1}Y_{t} + a_{2}\right)\mathrm{d}t + \sqrt{\varepsilon}g_{2}\left(\xi_{t}\right)\mathrm{d}W_{t}^{2} & \displaystyle = f_{2}\left(\xi_{t}\right)\mathrm{d}t + \sqrt{\varepsilon}g_{2}\left(\xi_{t}\right)\mathrm{d}W_{t}^{2},\\
\displaystyle \mathrm{d}Z_{t} = \tau\varepsilon\left(\phi_{r}\left(X_{t}\right) - \frac{Z_{t} - z_{b}}{\tau_{z}}\right)\mathrm{d}t & \displaystyle = f_{3}\left(\xi_{t}\right)\mathrm{d}t,\\
\displaystyle \xi_{0} \text{ is given}, & \displaystyle
\end{array}\right.
\label{SICC}
\end{equation}
\noindent where $W_{t}^{1}$ and $W_{t}^{2}$ are independent scalar Wiener processes, and the functions $g_{1}$ and $g_{2}$ describe the intensity of the stochastic perturbations and are assumed to satisfy suitable regularity conditions.

Since the slow drift term moves the variable $Y_{t}$ by order $\varepsilon\Delta t$ over a small time interval $\Delta t$, the variance of the stochastic perturbation acting on $Y_{t}$ is chosen to scale as $\varepsilon\Delta t$. This explains the factor $\sqrt{\varepsilon}$ in the diffusion term. Consequently, system \eqref{SICC} preserves the slow-fast structure of the deterministic model.

The proposed model arises from perturbing the ICC system directly, but it can also be formally obtained by introducing white-noise perturbations into the linear parameters of the first two equations. In particular, taking $b_{1}$ as $b_{1} + \frac{\sigma_{1}}{\tau}\frac{\mathrm{d}W_{t}^{1}}{\mathrm{d}t}$, $b_{0}$ as $b_{0} + \frac{\sigma_{2}}{\tau}\frac{\mathrm{d}W_{t}^{1}}{\mathrm{d}t}$, $\varepsilon a_{0}$ as $\varepsilon a_{0} + \sqrt{\varepsilon}\frac{\sigma_{3}}{\tau k}\frac{\mathrm{d}W_{t}^{2}}{\mathrm{d}t}$ and $\varepsilon a_{1}$ as $\varepsilon a_{1} + \sqrt{\varepsilon}\frac{\sigma_{4}}{\tau k}\frac{\mathrm{d}W_{t}^{2}}{\mathrm{d}t}$ formally leads to the following stochastic system:
\begin{equation}
\left\{\begin{array}{l}
\displaystyle \mathrm{d}X_{t} = \tau\left(b_{0}Y_{t} + b_{1}X_{t} - X_{t}^{3} - \phi_{f}\left(Z_{t}\right)\right)\mathrm{d}t + \left(\sigma_{1}X_{t} + \sigma_{2}Y_{t}\right)\mathrm{d}W_{t}^{1},\\
\displaystyle \mathrm{d}Y_{t} = \tau\varepsilon k\left(a_{0}X_{t} + a_{1}Y_{t} + a_{2}\right)\mathrm{d}t + \sqrt{\varepsilon}\left(\sigma_{3}X_{t} + \sigma_{4}Y_{t}\right)\mathrm{d}W_{t}^{2},\\
\displaystyle \mathrm{d}Z_{t} = \tau\varepsilon\left(\phi_{r}\left(X_{t}\right) - \frac{Z_{t} - z_{b}}{\tau_{z}}\right)\mathrm{d}t,\\
\displaystyle \xi_{0} \text{ is given}.
\end{array}\right.
\label{lineal}
\end{equation}

As a first theoretical result, we show that the stochastic model \eqref{SICC} preserves the positivity of the slow variable $Z_{t}$ whenever the initial condition is positive.

\begin{theorem}[Positivity of the variable $Z_{t}$]\label{posi}
Let $Z_{0} \in \mathbb{R}_{+}$. Then, the third component $Z_{t}$ associated with model \eqref{SICC} satisfies $Z_{t} \in \mathbb{R}_{+}$ for all $t \in \left[0, T\right]$.
\end{theorem}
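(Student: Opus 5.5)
Since the $Z$-equation in \eqref{SICC} has no diffusion term, $Z_{t}$ is, pathwise, a $C^{1}$ function of time solving a linear ODE with a random, continuous forcing. The plan is to solve that ODE explicitly and read positivity off the formula, with no stochastic calculus needed.

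Fix $\omega \in \Omega$ such that the solution path $t \mapsto \xi_{t}(\omega)$ is continuous on $[0,T]$. Existence of a continuous solution is assumed, e.g.\ via Theorem~\ref{eu}, or up to an explosion time if needed; the argument below works on any interval where the solution is defined. For such $\omega$ the third equation reads
\begin{equation*}
\frac{\mathrm{d}Z_{t}}{\mathrm{d}t} + \frac{\tau\varepsilon}{\tau_{z}}Z_{t} = \tau\varepsilon\left(\phi_{r}\left(X_{t}\right) + \frac{z_{b}}{\tau_{z}}\right).
\end{equation*}
The right-hand side is continuous in $t$, because $X_{t}$ is continuous and $\phi_{r}$ is smooth. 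Write $\alpha = \tau\varepsilon/\tau_{z} > 0$. Multiplying by the integrating factor $e^{\alpha t}$ and integrating gives the variation-of-constants formula
\begin{equation*}
Z_{t} = e^{-\alpha t}Z_{0} + \tau\varepsilon\int_{0}^{t} e^{-\alpha\left(t-s\right)}\left(\phi_{r}\left(X_{s}\right) + \frac{z_{b}}{\tau_{z}}\right)\mathrm{d}s .
\end{equation*}

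We then conclude by signs. We have $Z_{0} > 0$, $\phi_{r}(x) \in (0,\lambda)$ for all $x$, and $z_{b}, \tau_{z}, \tau, \varepsilon > 0$. Hence both terms are nonnegative and the first is strictly positive, so $Z_{t} \geq e^{-\alpha t}Z_{0} > 0$ for all $t \in [0,T]$. As a by-product we also get the upper bound $Z_{t} \leq Z_{0} + \tau_{z}\lambda + z_{b}$, which may be useful later for well-posedness. If $Z_{0}$ is random, the same argument applies to almost every $\omega$ on the event $\{Z_{0} > 0\}$.

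The only subtle point is logical ordering: positivity is proved before global existence. I would handle this by working on $[0, \tau_{n} \wedge T]$, where $\tau_{n}$ is the exit time from the ball of radius $n$; a unique local solution exists there by local Lipschitz continuity. The formula holds on each such interval, so positivity holds up to the explosion time, which later turns out to exceed $T$ almost surely. Note also that $\phi_{f}(z) = \mu z/(z + z_{0})$ is singular at $z = -z_{0}$. Knowing that $Z$ stays positive, and so away from that singularity, is what makes the drift locally Lipschitz along trajectories; this is why the result is stated first.
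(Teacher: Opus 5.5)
Your proof is correct and follows the paper's argument: the paper also solves the diffusion-free $Z$-equation pathwise by the variation-of-constants formula and reads positivity off the signs of $Z_{0}$, $\phi_{r}$, $z_{b}/\tau_{z}$ and the exponential factors. Your remarks on logical ordering go beyond the paper and are sensible, with one small caveat: for $n \geq z_{0}$ the ball of radius $n$ contains points with $z = -z_{0}$, where $\phi_{f}$ is singular, so your localizing sets should also keep $z$ away from $-z_{0}$ (or you should extend $\phi_{f}$ smoothly to $z \le 0$) for the local Lipschitz claim to hold.
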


\begin{proof}
\noindent Let us consider the differential equation of the variable $Z_{t}$:
\begin{equation*}
\mathrm{d}Z_{t} = \tau\varepsilon\left(\phi_{r}\left(X_{t}\right) - \frac{Z_{t} - z_{b}}{\tau_{z}}\right)\mathrm{d}t,
\end{equation*}
\noindent which is deterministic.

\noindent Solving the differential equation, we obtain
\begin{equation}
Z_{t} = \left(\int_{0}^{t}\tau\varepsilon\left(\phi_{r}\left(X_{s}\right) + \frac{z_{b}}{\tau_{z}}\right)e^{\frac{\tau\varepsilon}{\tau_{z}}s}\mathrm{d}s + Z_{0}\right)e^{-\frac{\tau\varepsilon}{\tau_{z}}t}.
\label{Zt}
\end{equation}

\noindent Note that for all $x \in \mathbb{R}$
\begin{equation*}
\phi_{r}\left(x\right) = \frac{\lambda}{1 + \exp\left(- \rho\left(x - x_{on}\right)\right)} > 0.
\end{equation*}

\noindent Consequently, everything on the right-hand side of \eqref{Zt} is positive, thus concluding the proof.
\end{proof}

We now state an existence and uniqueness result for the stochastic model. In what follows, we denote $\mathcal{R} \coloneqq \mathbb{R}^{2} \times \mathbb{R}_{+}$.

\begin{theorem}[Existence and uniqueness]\label{EUICC}
Assume that the functions $g_{1}$ and $g_{2}$ are locally Lipschitz and satisfy the global linear growth condition
\begin{equation*}
\frac{1}{2}g_{1}\left(\xi\right)^{2} + \frac{1}{2}\varepsilon g_{2}\left(\xi\right)^{2} \leq K\left(1 + \left\|\xi\right\|^{2}\right),
\end{equation*}
\noindent for all $\xi \in \mathcal{R}$, for some constant $K > 0$.

Then, for any given initial value $\xi_{0} \in \mathcal{R}$, the stochastic ICC model \eqref{SICC} admits a unique strong solution $\xi_{t} \in \mathcal{M}^{2}\left(\left[0, T\right]; \mathcal{R}\right)$ for all $t \in \left[0, T\right]$.
\end{theorem}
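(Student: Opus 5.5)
The plan is to apply Theorem~\ref{eu} with $d=3$, $m=2$, drift $f=(f_1,f_2,f_3)^\top$ and diffusion matrix
\begin{equation*}
g(\xi)=\begin{pmatrix} g_1(\xi) & 0\\ 0 & \sqrt{\varepsilon}\,g_2(\xi)\\ 0 & 0\end{pmatrix},
\end{equation*}
and then use Theorem~\ref{posi} to keep the solution in $\mathcal{R}$.

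One technical point comes first. The drift contains $\phi_f(z)=\mu z/(z+z_0)$, which is singular at $z=-z_0$, so it is not locally Lipschitz on all of $\mathbb{R}^3$. I would therefore work with a modified system in which $\phi_f$ is replaced by $\tilde\phi_f(z)=\phi_f(\max\{z,0\})$. Likewise $g_1,g_2$ are extended from $\mathcal{R}$ to $\mathbb{R}^3$ by composing with $(x,y,z)\mapsto(x,y,\max\{z,0\})$. These extensions are globally defined and Lipschitz-compatible, and they agree with the original coefficients on $\mathcal{R}$. Measurability (hypothesis 1) is immediate because all coefficients are continuous.

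For local Lipschitz continuity (hypothesis 2), each drift component is handled separately.
\begin{itemize}
\item $f_1$ is a polynomial in $(x,y)$ plus $\tilde\phi_f$. The function $\tilde\phi_f$ is globally Lipschitz with constant $\mu/z_0$.
\item $f_2$ is affine.
\item $f_3$ involves $\phi_r$, which is smooth with bounded derivative $\lambda\rho/4$, plus a linear term.
\end{itemize}
The diffusion is locally Lipschitz by assumption, and the truncation $z\mapsto\max\{z,0\}$ is $1$-Lipschitz, so the extended coefficients stay locally Lipschitz.

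The main work is the growth bound (hypothesis 3). Since the diffusion part is exactly the assumed condition, it suffices to show $\xi^\top f(\xi)\le K'(1+\|\xi\|^2)$. The terms are bounded as follows:
\begin{align*}
xf_1 &= \tau\bigl(b_0xy+b_1x^2-x^4-x\tilde\phi_f(z)\bigr),\\
yf_2 &= \tau\varepsilon k\bigl(a_0xy+a_1y^2+a_2y\bigr),\\
zf_3 &= \tau\varepsilon\bigl(z\phi_r(x)-(z^2-z_bz)/\tau_z\bigr).
\end{align*}
\begin{itemize}
\item The $-x^4$ term is nonpositive and is simply dropped.
\item The cross terms $xy$ are bounded by $\tfrac12(x^2+y^2)$ via \eqref{ine}.
\item Since $0\le\tilde\phi_f\le\mu$, we get $|x\tilde\phi_f|\le\tfrac12(x^2+\mu^2)$.
\item Since $0<\phi_r<\lambda$, we get $z\phi_r\le\tfrac12(z^2+\lambda^2)$.
\item The remaining linear terms $a_2y$ and $z_bz/\tau_z$ are bounded by \eqref{ine} with $\delta=1$.
\item The terms $a_1y^2$ and $-z^2/\tau_z$ are nonpositive.
\end{itemize}
Collecting everything gives an explicit constant $K'$ depending only on the parameters. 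Then $K+K'$ serves as the constant in Theorem~\ref{eu}, which yields a unique solution of the modified SDE in $\mathcal{M}^2([0,T];\mathbb{R}^3)$.

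Finally, the third component of the modified system satisfies the same linear ODE as in Theorem~\ref{posi}, because $f_3$ was not modified and carries no noise. Hence the formula \eqref{Zt} applies verbatim, and $Z_t>0$ for all $t$ whenever $Z_0>0$. On $\mathcal{R}$ the modified coefficients coincide with the original ones, so the solution is a strong solution of \eqref{SICC} taking values in $\mathcal{R}$. Uniqueness transfers as well: any solution of \eqref{SICC} in $\mathcal{R}$ is also a solution of the modified system.

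I expect the main obstacle to be the domain issue, namely that $\phi_f$ and the assumptions on $g_1,g_2$ are only given on $\mathcal{R}$ while Theorem~\ref{eu} is stated on $\mathbb{R}^d$. The extension-plus-positivity argument above is how I would resolve it. The growth estimate itself is routine.
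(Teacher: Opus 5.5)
Your proposal is correct and follows the paper's route: verify the hypotheses of Theorem~\ref{eu}. Local Lipschitz continuity comes from the bounded derivatives of $\phi_r$ and $\phi_f$, and the monotone condition comes from dropping $-\tau X_t^{4}$ and $-\frac{\tau\varepsilon}{\tau_z}Z_t^{2}$, bounding the cross terms via \eqref{ine}, and using $\phi_f\le\mu$, $\phi_r\le\lambda$.

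Your extra step goes beyond the paper. You extend the coefficients to $\mathbb{R}^3$ through $z\mapsto\max\{z,0\}$ and then use Theorem~\ref{posi} to return to $\mathcal{R}$. This makes rigorous a point the paper passes over, since the paper applies a result stated on $\mathbb{R}^d$ but only checks the coefficients for $z>0$.

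One small caveat: if $\mathbb{R}_+$ is read as $(0,\infty)$, then $g_1,g_2$ need not be defined at $z=0$. In that case, truncate at a level $0<\eta<Z_0e^{-\tau\varepsilon T/\tau_z}$ instead. This works because \eqref{Zt} gives $Z_t\ge Z_0e^{-\tau\varepsilon T/\tau_z}$ on $[0,T]$.
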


\begin{proof}
\noindent In order to apply Theorem~\ref{eu}, we first verify that its assumptions are satisfied.

\noindent The local Lipschitz condition holds, since all the terms in the drift are polynomial functions, except for $\phi_{r}\left(x\right) = \frac{\lambda}{1 + \exp\left(- \rho\left(x - x_{on}\right)\right)}$, which is continuous and differentiable with bounded derivative, and $\phi_{f}\left(z\right) = \frac{\mu z}{z + z_{0}}$, which is continuous and differentiable with bounded derivative in our domain, that is, for $z > 0$. Moreover, by hypothesis, the diffusion coefficients $g_{1}$ and $g_{2}$ are locally Lipschitz with respect to the state variable.

\noindent On the other hand, we study the monotone condition. Let
\begin{equation*}
\xi_{t}^{\top}\begin{pmatrix}
\displaystyle f_{1}\left(\xi_{t}\right)\\
\displaystyle f_{2}\left(\xi_{t}\right)\\
\displaystyle f_{3}\left(\xi_{t}\right)
\end{pmatrix} + \frac{1}{2}g_{1}\left(\xi_{t}\right)^{2} + \frac{1}{2}\varepsilon g_{2}\left(\xi_{t}\right)^{2}.
\end{equation*}

\noindent We study the first product:
\begin{equation*}
\xi_{t}^{\top}\begin{pmatrix}
\displaystyle f_{1}\left(\xi_{t}\right)\\
\displaystyle f_{2}\left(\xi_{t}\right)\\
\displaystyle f_{3}\left(\xi_{t}\right)
\end{pmatrix} \leq \underbrace{A_{1}Y_{t} + A_{2}Z_{t}}_{I} + \underbrace{B_{1}X_{t}^{2} + B_{2}Y_{t}^{2}}_{II} + \underbrace{C_{1}X_{t}Y_{t} + C_{2}X_{t}\phi_{f}\left(Z_{t}\right)}_{III} + \underbrace{\left(- \frac{\tau\varepsilon}{\tau_{z}}Z_{t}^{2} - \tau X_{t}^{4}\right)}_{IV},
\end{equation*}
\noindent where

\begin{itemize}
\item $A_{1} = \tau\varepsilon ka_{2}$.

\item $A_{2} = \tau\varepsilon\left(\phi_{r}\left(X_{t}\right) + \frac{z_{b}}{\tau_{z}}\right)$.

\item $B_{1} = \tau b_{1}$.

\item $B_{2} = - \tau\varepsilon ka_{1}$.

\item $C_{1} = \tau\left(\left|b_{0}\right| + \varepsilon ka_{0}\right)$.

\item $C_{2} = - \tau$.
\end{itemize}

\noindent We study term by term. To $I$, we apply the right inequality of \eqref{ine} and use the boundedness of the function $\phi_{r}\left(x\right) = \frac{\lambda}{1 + \exp\left(- \rho\left(x - x_{on}\right)\right)} \leq \lambda$, for all $x \in \mathbb{R}$:
\begin{equation*}
I \leq \frac{A_{1}}{2} + \frac{A_{1}}{2}Y_{t}^{2} + \frac{\tau\varepsilon\left(\lambda + \frac{z_{b}}{\tau_{z}}\right)}{2} + \frac{\tau\varepsilon\left(\lambda + \frac{z_{b}}{\tau_{z}}\right)}{2}Z_{t}^{2}.
\end{equation*}

\noindent Note that $II$ is already in the required form. Let us study $III$. Note that $C_{1} > 0$. Then, we use again the right inequality of \eqref{ine} to obtain the following:
\begin{equation*}
C_{1}X_{t}Y_{t} \leq \frac{C_{1}}{2}X_{t}^{2} + \frac{C_{1}}{2}Y_{t}^{2}.
\end{equation*}

\noindent On the other hand, since $C_{2} < 0$, we use again the left inequality of \eqref{ine}, along with the bound $\phi_{f}\left(z\right) = \frac{\mu z}{z + z_{0}} \leq \mu$, for all $z > 0$, to obtain
\begin{equation*}
C_{2}X_{t}\phi_{f}\left(Z_{t}\right) \leq - \frac{C_{2}}{2}X_{t}^{2} - \frac{C_{2}}{2}\phi_{f}\left(Z_{t}\right)^{2} \leq - \frac{C_{2}}{2}X_{t}^{2} - \frac{C_{2}}{2}\mu^{2}.
\end{equation*}

\noindent Finally, we use the fact that $IV < 0$ to obtain
\begin{equation*}
\xi_{t}^{\top}\begin{pmatrix}
\displaystyle f_{1}\left(\xi_{t}\right)\\
\displaystyle f_{2}\left(\xi_{t}\right)\\
\displaystyle f_{3}\left(\xi_{t}\right)
\end{pmatrix} \leq \frac{A_{1}}{2} + \frac{\tau\varepsilon\left(\lambda + \frac{z_{b}}{\tau_{z}}\right)}{2} - \frac{C_{2}}{2}\mu^{2} + \left(\frac{C_{1}}{2} - \frac{C_{2}}{2}\right)X_{t}^{2} + \left(\frac{A_{1}}{2} + \frac{C_{1}}{2}\right)Y_{t}^{2} + \frac{\tau\varepsilon\left(\lambda + \frac{z_{b}}{\tau_{z}}\right)}{2}Z_{t}^{2}.
\end{equation*}

\noindent Using the global linear growth bound on the hypothesis,
\begin{equation*}
\xi_{t}^{\top}\begin{pmatrix}
\displaystyle f_{1}\left(\xi_{t}\right)\\
\displaystyle f_{2}\left(\xi_{t}\right)\\
\displaystyle f_{3}\left(\xi_{t}\right)
\end{pmatrix} + \frac{1}{2}g_{1}\left(\xi_{t}\right)^{2} + \frac{1}{2}\varepsilon g_{2}\left(\xi_{t}\right)^{2} \leq K^{\ast}\left(1 + \left\|\xi_{t}\right\|^{2}\right),
\end{equation*}
\noindent for some constant $K^{\ast} > 0$, which yields the monotone condition. Then, by Theorem~\ref{eu}, we conclude that there exists a unique solution $\xi_{t} \in \mathcal{M}^{2}\left(\left[0, T\right]; \mathcal{R}\right)$ to problem \eqref{SICC}.
\end{proof}

\begin{remark}
Existence and uniqueness also hold for the linear stochastic system \eqref{lineal}. Indeed, its drift and diffusion coefficients are locally Lipschitz, and the diffusion terms satisfy a global linear growth condition. In particular, one may take
\begin{equation*}
K = \max\left\{\sigma_{1}^{2} + \varepsilon\sigma_{3}^{2}, \sigma_{2}^{2} + \varepsilon\sigma_{4}^{2}\right\},
\end{equation*}
so that the assumptions of Theorem~\ref{EUICC} are fulfilled.
\end{remark}

We now investigate the long-time behaviour of the stochastic solution $\left(X_{t}, Y_{t}, Z_{t}\right)^{\top}$ in the case where the deterministic system admits a unique globally asymptotically stable equilibrium $\left(x^{\ast}, y^{\ast}, z^{\ast}\right)^{\top}$.

In the deterministic setting, global asymptotic stability implies convergence of all trajectories towards the equilibrium. In the stochastic framework, however, the deterministic equilibrium is generally not an equilibrium of the perturbed system. Therefore, instead of convergence to a fixed point, one seeks estimates describing the behaviour of the stochastic solution relative to the deterministic equilibrium.

The following result provides a global estimate of the average oscillations of the stochastic trajectories around the deterministic equilibrium and is one of the main results of the paper. It can be interpreted as a quantitative measure of stability in mean, and follows an approach similar to that developed in \cite{Berrhazi2018, Caraballo2018}.

\begin{theorem}\label{th}
Assume that \eqref{ICC} has a unique globally asymptotically stable equilibrium $\left(x^{\ast}, y^{\ast}, z^{\ast}\right)^{\top}$ and functions $g_{1}$ and $g_{2}$ of the stochastic model \eqref{SICC} satisfy the following global linear growth bounds:
\begin{equation*}
\frac{1}{2}g_{1}\left(\xi\right)^{2} + \frac{1}{2}\varepsilon g_{2}\left(\xi\right)^{2} \leq K\left(1 + \left\|\xi\right\|^{2}\right),
\end{equation*}
\noindent for all $\xi \in \mathcal{R}$, with
\begin{equation*}
K < \min\left\{\frac{\tau b_{0}a_{1}}{2a_{0}\beta}, \frac{\tau\varepsilon}{2\tau_{z}\beta}\right\},
\end{equation*}
\noindent where $\beta \coloneqq \max\left\{1, - \frac{b_{0}}{\varepsilon ka_{0}}\right\} > 0$. Then, for any given initial condition $\xi_{0} \in \mathcal{R}$, there exists a positive constant $C$ such that the solution $\xi_{t}$ of model \eqref{SICC} satisfies
\begin{equation}
\limsup_{t \to \infty}\frac{1}{t}\mathbb{E}\left[\int_{0}^{t}\left(\left(X_{s} - x^{\ast}\right)^{2} + \left(Y_{s} - y^{\ast}\right)^{2} + \left(Z_{s} - z^{\ast}\right)^{2}\right)\mathrm{d}s\right] \leq C.
\label{condi}
\end{equation}
\end{theorem}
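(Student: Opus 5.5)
The plan is a Lyapunov argument with a weighted quadratic function centred at the deterministic equilibrium, in the spirit of \cite{Berrhazi2018, Caraballo2018}. Write $u = X_{t} - x^{\ast}$, $v = Y_{t} - y^{\ast}$, $w = Z_{t} - z^{\ast}$. Since $(x^{\ast}, y^{\ast}, z^{\ast})^{\top}$ annihilates the drift of \eqref{ICC}, each $f_{i}(\xi_{t})$ can be rewritten as a difference $f_{i}(\xi_{t}) - f_{i}(\xi^{\ast})$. I would take
\begin{equation*}
V(u, v, w) = \frac{1}{2}\left(\alpha_{1}u^{2} + \alpha_{2}v^{2} + \alpha_{3}w^{2}\right), \qquad \alpha_{1} = 1, \quad \alpha_{2} = -\frac{b_{0}}{\varepsilon k a_{0}}, \quad \alpha_{3} = 1 .
\end{equation*}
These weights are chosen so that the bilinear term $\alpha_{1}\tau b_{0}uv$ from the $X$-equation cancels exactly with $\alpha_{2}\tau\varepsilon k a_{0}uv$ from the $Y$-equation. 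With this choice $\max\{\alpha_{1}, \alpha_{2}, \alpha_{3}\} = \beta$, which explains where $\beta$ enters the hypothesis.

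\textbf{Step 1: the drift part of $LV$.} Apply It\^o's formula \eqref{Ito} and estimate the drift contributions term by term.
\begin{itemize}
\item The $X$-equation gives $\tau\left(b_{1}u^{2} - u^{2}\left(X_{t}^{2} + X_{t}x^{\ast} + x^{\ast 2}\right)\right)$. Since $X^{2} + Xx^{\ast} + x^{\ast 2} \geq \frac{1}{4}u^{2}$ (up to constants depending on $x^{\ast}$), this is bounded by $\tau\left(b_{1}u^{2} - \frac{1}{4}u^{4}\right)$. That expression is at most $C_{\eta} - M u^{2}$ for any prescribed $M > 0$, because the quartic term dominates.
\item The $Y$-equation gives $\alpha_{2}\tau\varepsilon k a_{1}v^{2} = \frac{\tau b_{0}a_{1}}{a_{0}}\cdot(-1)\,v^{2}$, which is negative.
\item The $Z$-equation gives $-\frac{\tau\varepsilon}{\tau_{z}}w^{2}$.
\item The remaining cross terms are $-\tau u\left(\phi_{f}(Z_{t}) - \phi_{f}(z^{\ast})\right)$ and $\tau\varepsilon w\left(\phi_{r}(X_{t}) - \phi_{r}(x^{\ast})\right)$. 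Both $\phi_{f}$ and $\phi_{r}$ are bounded on $\mathcal{R}$ (by $\mu$ and $\lambda$, using Theorem~\ref{posi}), so \eqref{ine} turns them into $\delta(u^{2} + w^{2})$ plus a constant, with $\delta$ arbitrarily small.
\end{itemize}

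\textbf{Step 2: the diffusion part of $LV$.} The It\^o correction is $\frac{1}{2}\left(\alpha_{1}g_{1}^{2} + \alpha_{2}\varepsilon g_{2}^{2}\right) \leq \beta K\left(1 + \|\xi_{t}\|^{2}\right)$. I would then use $\|\xi\|^{2} \leq (1+\eta)\|\xi - \xi^{\ast}\|^{2} + C_{\eta}\|\xi^{\ast}\|^{2}$, which follows from \eqref{ine}.

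\textbf{Step 3: combining.} Collecting everything gives
\begin{equation*}
LV \leq C_{0} - \left(M - \beta K(1+\eta) - \delta\right)u^{2} - \left(\frac{\tau b_{0}a_{1}}{a_{0}} - \beta K(1+\eta)\right)v^{2} - \left(\frac{\tau\varepsilon}{\tau_{z}} - \beta K(1+\eta) - \delta\right)w^{2}.
\end{equation*}
The normalisation constants must be tracked so that the strict inequality on $K$ in the statement (with its factor $2$) is exactly what makes the $v$- and $w$-coefficients positive. Because that inequality is strict, one can choose $\eta$ and $\delta$ small, and $M$ large, to get $LV \leq C_{0} - c\,\|\xi_{t} - \xi^{\ast}\|^{2}$ with $c > 0$.

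\textbf{Step 4: integrating.} Integrate from $0$ to $t$ and take expectations. The stochastic integral is only a local martingale, so I would localise with stopping times $\tau_{n} = \inf\{s : \|\xi_{s}\| \geq n\}$ and pass to the limit by Fatou and monotone convergence, using Theorem~\ref{EUICC}. This yields
\begin{equation*}
c\,\mathbb{E}\int_{0}^{t}\|\xi_{s} - \xi^{\ast}\|^{2}\,\mathrm{d}s \leq \mathbb{E}V(\xi_{0}) + C_{0}t .
\end{equation*}
Dividing by $t$ and letting $t \to \infty$ gives \eqref{condi} with $C = C_{0}/c$, since $V \geq 0$.

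The main obstacle is the constant bookkeeping in Steps 2–3. The factors $\frac{1}{2}$ in $V$, the factor $2$ in the hypothesis on $K$, and the slack $(1+\eta)$ from recentring $\|\xi\|^{2}$ at $\xi^{\ast}$ must combine so that the stated strict bound on $K$ suffices. If the constants do not match with this $V$, I would rescale $V$ or absorb the $\eta$-slack into the strictness of the inequality.
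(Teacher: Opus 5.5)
Your proposal is correct and is essentially the paper's proof. It uses the same weighted Lyapunov function with $\alpha=-\frac{b_{0}}{\varepsilon k a_{0}}$ cancelling the $uv$ term, the same bound $\beta K(1+\|\xi_{t}\|^{2})$ on the It\^o correction, and the same integration in expectation followed by division by $t$.

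Your local variants are sound and somewhat simpler than the paper's. You bound the $\phi_{f}$, $\phi_{r}$ cross terms by boundedness instead of the mean value theorem. You replace the positive/negative splitting of the parabola $\tilde{A}(X_{t})$ by the exact identity $X_{t}^{2}+X_{t}x^{\ast}+(x^{\ast})^{2}=\frac{1}{4}u^{2}+\frac{3}{4}(X_{t}+x^{\ast})^{2}$, so that $-\frac{\tau}{4}u^{4}$ absorbs every quadratic term. These choices only cost additive constants that persist at $K=0$, which the theorem does not need. Your stopping-time localisation also justifies the expectation step, which the paper takes without comment.

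Your worry about the constants is unfounded. Taking $\eta=1$ reproduces the paper's $a^{2}\leq 2(a-b)^{2}+2b^{2}$, and hence exactly the coefficient $2\beta K$ that the hypothesis on $K$ is built to absorb; any $\eta<1$ only adds slack.
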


\begin{proof}
\noindent We define the following Lyapunov function for each $t \in \left[0, T\right]$:
\begin{equation*}
V\left(\xi_{t}\right) = \frac{1}{2}\left(X_{t} - x^{\ast}\right)^{2} + \frac{\alpha}{2}\left(Y_{t} - y^{\ast}\right)^{2} + \frac{1}{2}\left(Z_{t} - z^{\ast}\right)^{2},
\end{equation*}
\noindent with $\alpha$ a positive constant to be specified later.

\noindent Hereinafter we denote $\epsilon_{x, t} = X_{t} - x^{\ast}$, $\epsilon_{y, t} = Y_{t} - y^{\ast}$ and $\epsilon_{z, t} = Z_{t} - z^{\ast}$.

\noindent Using the It{\^o}'s formula \eqref{Ito}, we obtain
\begin{equation}
\mathrm{d}V = LV\mathrm{d}t + \epsilon_{x, t}g_{1}\left(\xi_{t}\right)\mathrm{d}W_{t}^{1} + \alpha\epsilon_{y, t}\sqrt{\varepsilon}g_{2}\left(\xi_{t}\right)\mathrm{d}W_{t}^{2},
\label{dV}
\end{equation}
\noindent where
\begin{equation*}
LV\left(\xi_{t}\right) = \epsilon_{x, t}f_{1}\left(\xi_{t}\right) + \alpha\epsilon_{y, t}f_{2}\left(\xi_{t}\right) + \epsilon_{z, t}f_{3}\left(\xi_{t}\right) + \frac{1}{2}g_{1}\left(\xi_{t}\right)^{2} + \frac{1}{2}\alpha\varepsilon g_{2}\left(\xi_{t}\right)^{2}.
\end{equation*}

\noindent If $\left(x^{\ast}, y^{\ast}, z^{\ast}\right)^{\top}$ is a stable equilibrium point, then $f_{i}\left(\left(x^{\ast}, y^{\ast}, z^{\ast}\right)^{\top}\right) = 0$, for $i = 1, 2, 3$, from which we can write that:
\begin{equation*}
\begin{array}{rl}
\displaystyle 0 = & \displaystyle - b_{0}y^{\ast} - b_{1}x^{\ast} + \left(x^{\ast}\right)^{3} + \frac{\mu z^{\ast}}{z^{\ast} + z_{0}},\\
\displaystyle a_{2} = & \displaystyle - a_{0}x^{\ast} - a_{1}y^{\ast},\\
\displaystyle \frac{z_{b}}{\tau_{z}} = & \displaystyle - \frac{\lambda}{1 + \exp\left(- \rho\left(x^{\ast} - x_{on}\right)\right)} + \frac{z^{\ast}}{\tau_{z}}.
\end{array}
\end{equation*}

\noindent Substituting these values into the expressions of the functions $f_{1}$, $f_{2}$ and $f_{3}$, we have:
\begin{equation}
LV\left(\xi_{t}\right) = A\left(X_{t}\right)\epsilon_{x, t}^{2} + B\epsilon_{y, t}^{2} + C\epsilon_{z, t}^{2} + D\epsilon_{x, t}\epsilon_{y, t} + E\left(Z_{t}\right)\epsilon_{x, t} + F\left(X_{t}\right)\epsilon_{z, t} + G\left(\xi_{t}\right),
\label{LV1}
\end{equation}
\noindent where, using $\epsilon_{x, t}\left(- X_{t}^{3} + \left(x^{\ast}\right)^{3}\right) = \epsilon_{x, t}^{2}\left(- X_{t}^{2} - X_{t}x^{\ast} - \left(x^{\ast}\right)^{2}\right)$,

\begin{itemize}
\item $A\left(X_{t}\right) = \tau\left(b_{1} - X_{t}^{2} - X_{t}x^{\ast} - \left(x^{\ast}\right)^{2}\right)$.

\item $B = \alpha\tau\varepsilon ka_{1}$.

\item $C = - \frac{\tau\varepsilon}{\tau_{z}}$.

\item $D = \tau\left(b_{0} + \alpha\varepsilon ka_{0}\right)$.

\item $E\left(Z_{t}\right) = - \tau\left(\phi_{f}\left(Z_{t}\right) - \phi_{f}\left(z^{\ast}\right)\right)$.

\item $F\left(X_{t}\right) = \tau\varepsilon\left(\phi_{r}\left(X_{t}\right) - \phi_{r}\left(x^{\ast}\right)\right)$.

\item $G\left(\xi_{t}\right) = \frac{1}{2}g_{1}\left(\xi_{t}\right)^{2} + \frac{1}{2}\alpha\varepsilon g_{2}\left(\xi_{t}\right)^{2}$.
\end{itemize}

\noindent Choosing $\alpha = - \frac{b_{0}}{\varepsilon ka_{0}} > 0$, then $D = 0$. Also, to bound some summands, we will use the inequalities \eqref{ine}.

\begin{itemize}
\item $E\left(Z_{t}\right)\epsilon_{x, t}$: Since $\phi_{f}$ is continuously differentiable on $\mathbb{R}_{+}$, the mean value theorem can be applied: there exist $\zeta_{z}$ between $z^{\ast}$ and $Z_{t}$ such that $E\left(Z_{t}\right) = - \tau\phi_{f}'\left(\zeta_{z}\right)\epsilon_{z, t}$, with $\phi_{f}'\left(z\right) = \frac{\mu z_{0}}{\left(z + z_{0}\right)^{2}}$. Moreover, $0 < \phi_{f}'\left(z\right) \leq \frac{\mu}{z_{0}}$, $\forall z \in \mathbb{R}_{+}$. Let $L_{1} = \frac{\tau\mu}{z_{0}} > 0$, we use \eqref{ine}: $\forall \delta_{1} > 0$, $\epsilon_{x, t}\epsilon_{z, t} \geq - \frac{1}{2\delta_{1}}\epsilon_{x, t}^{2} - \frac{\delta_{1}}{2}\epsilon_{z, t}^{2}$. Taking into account that $- \tau\phi_{f}'\left(\zeta_{z}\right)$ is negative, we get
\begin{equation}
- \tau\phi_{f}'\left(\zeta_{z}\right)\epsilon_{x, t}\epsilon_{z, t} \leq \frac{\tau\phi_{f}'\left(\zeta_{z}\right)}{2\delta_{1}}\epsilon_{x, t}^{2} + \frac{\tau\phi_{f}'\left(\zeta_{z}\right)\delta_{1}}{2}\epsilon_{z, t}^{2} \leq \frac{L_{1}}{2\delta_{1}}\epsilon_{x, t}^{2} + \frac{L_{1}\delta_{1}}{2}\epsilon_{z, t}^{2}.
\label{eqE}
\end{equation}

\item $F\left(X_{t}\right)\epsilon_{z, t}$: Since $\phi_{r}$ is continuously differentiable on $\mathbb{R}$, the mean value theorem can be applied: there exist $\zeta_{x}$ between $x^{\ast}$ and $X_{t}$ such that $F\left(X_{t}\right) = \tau\varepsilon\phi_{r}'\left(\zeta_{x}\right)\epsilon_{x, t}$, with $\phi_{r}'\left(x\right) = \frac{\lambda\rho\exp\left(- \rho\left(x - x_{on}\right)\right)}{\left(1 + \exp\left(- \rho\left(x - x_{on}\right)\right)\right)^{2}}$. Moreover, $0 < \phi_{r}'\left(x\right) \leq \frac{\lambda\rho}{4}$, $\forall x \in \mathbb{R}$. Let $L_{2} = \frac{\tau\varepsilon\lambda\rho}{4} > 0$, we use \eqref{ine}: $\forall \delta_{2} > 0$, $\epsilon_{x, t}\epsilon_{z, t} \leq \frac{1}{2\delta_{2}}\epsilon_{x, t}^{2} + \frac{\delta_{2}}{2}\epsilon_{z, t}^{2}$. Taking into account that $\tau\varepsilon\phi_{r}'\left(\zeta_{x}\right)$ is positive, we get
\begin{equation}
\tau\varepsilon\phi_{r}'\left(\zeta_{x}\right)\epsilon_{x, t}\epsilon_{z, t} \leq \frac{\tau\varepsilon\phi_{r}'\left(\zeta_{x}\right)}{2\delta_{2}}\epsilon_{x, t}^{2} + \frac{\tau\varepsilon\phi_{r}'\left(\zeta_{x}\right)\delta_{2}}{2}\epsilon_{z, t}^{2} \leq \frac{L_{2}}{2\delta_{2}}\epsilon_{x, t}^{2} + \frac{L_{2}\delta_{2}}{2}\epsilon_{z, t}^{2}.
\label{eqF}
\end{equation}

\item $G\left(\xi_{t}\right)$: For this term, we use the global linear growth bounds and the inequality $\forall a, b \in \mathbb{R}$, $a^{2} \leq 2\left(a - b\right)^{2} + 2b^{2}$. Taking $\beta \coloneqq \max\left\{1, \alpha\right\} > 0$, we obtain
\begin{equation}
G\left(\xi_{t}\right) \leq \beta K\left(1 + \left\|\xi_{t}\right\|^{2}\right) \leq 2\beta K\left(1 + \epsilon_{x, t}^{2} + \epsilon_{y, t}^{2} + \epsilon_{z, t}^{2} + \left\|\left(x^{\ast}, y^{\ast}, z^{\ast}\right)\right\|^{2}\right).
\label{eqG}
\end{equation}
\end{itemize}

\noindent Inserting the bounds \eqref{eqE}--\eqref{eqG} into the equation \eqref{LV1}, we obtain
\begin{equation}
LV \leq \tilde{A}\left(X_{t}\right)\epsilon_{x, t}^{2} + \tilde{B}\epsilon_{y, t}^{2} + \tilde{C}\epsilon_{z, t}^{2} + \tilde{D},
\label{LV2}
\end{equation}
\noindent where now:

\begin{itemize}
\item $\tilde{A}\left(X_{t}\right) = A\left(X_{t}\right) + \frac{L_{1}}{2\delta_{1}} + \frac{L_{2}}{2\delta_{2}} + 2\beta K$.

\item $\tilde{B} = B + 2\beta K$.

\item $\tilde{C} = C + \frac{L_{1}\delta_{1}}{2} + \frac{L_{2}\delta_{2}}{2} + 2\beta K$.

\item $\tilde{D} = 2\beta K + 2\beta K\left(x^{\ast}\right)^{2} + 2\beta K\left(y^{\ast}\right)^{2} + 2\beta K\left(z^{\ast}\right)^{2}$.
\end{itemize}

\noindent We now study each term separately. Firstly, note that $\tilde{D}$ is always positive and only depends on $K$ and the equilibrium. On the other hand, $\tilde{B} < 0$ because $\tilde{B} = B + 2\beta K = - \frac{\tau b_{0}a_{1}}{a_{0}} + 2\beta K$ is always negative as $K < \frac{\tau b_{0}a_{1}}{2a_{0}\beta}$.

\noindent Also, we choose $\delta_{1}$ and $\delta_{2}$ such that $\tilde{C} < 0$, i.e., we choose for example $0 < \delta_{1} < \frac{2}{L_{1}}\left(- C - \frac{L_{2}\delta_{2}}{2} - 2\beta K\right)$ and $0 < \delta_{2} < \frac{2}{L_{2}}\left(- C - 2\beta K\right)$. Note that, $- C - 2\beta K = \frac{\tau\varepsilon}{\tau_{z}} - 2\beta K$ is always positive as $K < \frac{\tau\varepsilon}{2\tau_{z}\beta}$.

\noindent Finally, we study the parabola $\tilde{A}\left(X_{t}\right)$. Let $\tilde{\delta}$ a positive value chosen to prevent division by zero and let $L_{3} = \max\left\{\tilde{A}\left(X_{t}\right), \tilde{\delta}\mathbb{I}_{\left\{\max\left\{\tilde{A}\left(X_{t}\right)\right\} = 0\right\}}\right\} \geq 0$, where $\mathbb{I}$ is the indicator function. Note that if the parabola is always negative, $L_{3} = 0$ and otherwise, $L_{3}$ is the value of the parabola in its vertex or $\tilde{\delta}$ if the parabola in its vertex is equal to zero. Let $x^{-}$ and $x^{+}$ be the roots (or root) of the parabola (if they exist), with $x^{-} \leq x^{+}$, we choose $\delta_{3}$ a positive value, such that $\tilde{A}\left(x^{-} - \delta_{3}\right) = \tilde{A}\left(x^{+} + \delta_{3}\right) = - L_{3}$. Then, we split the parabola as follows:
\begin{equation*}
\tilde{A}\left(X_{t}\right) = \tilde{A}\left(X_{t}\right)^{+} + \tilde{A}\left(X_{t}\right)^{-},
\end{equation*}
\noindent where
\begin{equation*}
\tilde{A}\left(X_{t}\right)^{+} \coloneqq \left\{\begin{array}{ll}
\displaystyle 0, & \displaystyle X_{t} \notin \left[x^{-} - \delta_{3}, x^{+} + \delta_{3}\right],\\
\displaystyle \tilde{A}\left(X_{t}\right) + L_{3}, & \displaystyle X_{t} \in \left[x^{-} - \delta_{3}, x^{+} + \delta_{3}\right],
\end{array}\right.
\end{equation*}
\begin{equation*}
\tilde{A}\left(X_{t}\right)^{-} \coloneqq \left\{\begin{array}{ll}
\displaystyle \tilde{A}\left(X_{t}\right), & \displaystyle X_{t} \notin \left[x^{-} - \delta_{3}, x^{+} + \delta_{3}\right],\\
\displaystyle - L_{3}, & \displaystyle X_{t} \in \left[x^{-} - \delta_{3}, x^{+} + \delta_{3}\right].
\end{array}\right.
\end{equation*}

\noindent Note that $\tilde{A}\left(X_{t}\right)^{-}$ is always negative. In particular, $\tilde{A}\left(X_{t}\right)^{-} \leq - L_{3} < 0$ if the parabola is not necessarily negative. Also, $\tilde{A}\left(X_{t}\right)^{+}$ is always nonnegative and is bounded by $2L_{3}$. Therefore,
\begin{equation*}
\tilde{A}\left(X_{t}\right)^{+}\epsilon_{x, t}^{2} \leq 2L_{3}\max_{X_{t} \in \left[x^{-} - \delta_{3}, x^{+} + \delta_{3}\right]}\left\{\epsilon_{x, t}^{2}\right\} \eqqcolon \tilde{E} < \infty.
\end{equation*}

\noindent This leads us to the following inequality from \eqref{LV2}:
\begin{equation*}
LV \leq C_{2}\left(\epsilon_{x, t}^{2} + \epsilon_{y, t}^{2} + \epsilon_{z, t}^{2}\right) + C_{1},
\end{equation*}
\noindent where,

\begin{itemize}
\item $C_{1} = \tilde{D} + \tilde{E} \geq 0$.

\item $C_{2} = \max{\left\{\max{\left\{\tilde{A}\left(X_{t}\right)^{-}\right\}}, \tilde{B}, \tilde{C}\right\}} < 0$.
\end{itemize}

\noindent Substituting the above inequality into \eqref{dV},
\begin{equation*}
\mathrm{d}V \leq \left(C_{2}\left(\epsilon_{x, t}^{2} + \epsilon_{y, t}^{2} + \epsilon_{z, t}^{2}\right) + C_{1}\right)\mathrm{d}t + \epsilon_{x, t}g_{1}\left(\xi_{t}\right)\mathrm{d}W_{t}^{1} + \alpha\epsilon_{y, t}\sqrt{\varepsilon}g_{2}\left(\xi_{t}\right)\mathrm{d}W_{t}^{2}.
\end{equation*}

\noindent Integrating both sides between 0 and $t$ and taking expectation, we get:
\begin{equation*}
0 \leq \mathbb{E}\left[V\left(\xi_{t}\right)\right] \leq V\left(\xi_{0}\right) + C_{2}\mathbb{E}\left[\int_{0}^{t}\left(\epsilon_{x, s}^{2} + \epsilon_{y, s}^{2} + \epsilon_{z, s}^{2}\right)\mathrm{d}s\right] + C_{1}t,
\end{equation*}
\noindent which can be rewritten as
\begin{equation*}
\mathbb{E}\left[\int_{0}^{t}\left(\epsilon_{x, s}^{2} + \epsilon_{y, s}^{2} + \epsilon_{z, s}^{2}\right)\mathrm{d}s\right] \leq \frac{V\left(\xi_{0}\right)}{- C_{2}} + \frac{C_{1}}{- C_{2}}t.
\end{equation*}

\noindent Hence, taking limit and letting $t \to \infty$, we finally obtain the desired estimate
\begin{equation*}
\limsup_{t \to \infty}\frac{1}{t}\mathbb{E}\left[\int_{0}^{t}\left(\left(X_{s} - x^{\ast}\right)^{2} + \left(Y_{s} - y^{\ast}\right)^{2} + \left(Z_{s} - z^{\ast}\right)^{2}\right)\mathrm{d}s\right] \leq C,
\end{equation*}
\noindent thus concluding the proof.
\end{proof}

\begin{remark}\label{remark2}
Note that in the reasoning of the previous theorem, all terms can be controlled except for the parabolic one.

It may happen that the parabola has no real roots and remains always negative. From its expression, this occurs when the equilibrium lies sufficiently far from the fold curves $\Gamma^{-}$ or $\Gamma^{+}$ (in the $x^{\ast}$ component). In this case, the constant $C_{1}$ depends only on $K$. Moreover, in the deterministic setting, that is, when $K$ can be taken equal to zero, the obtained result shows that the equilibrium point is globally asymptotically stable.

In general, since the parabola depends on $x^{\ast}$, the distance between the equilibrium point and the fold curves can be interpreted as a measure of the stability of the point: as this distance increases, i.e., as $\left|x^{\ast}\right|$ grows, the constant $C_{1}$ decreases, and consequently so does $C$. This reflects the fact that equilibria located deeper inside the stable sheets $S_{l}$ or $S_{r}$ exhibit stronger stability properties.

On the other hand, a similar behaviour is observed with respect to the constant $K$: the smaller the value of $K$, the smaller the constant appearing in condition \eqref{condi}. Since $K$ controls the intensity of the stochastic perturbation, this is consistent with the fact that, as the noise level decreases, the stochastic trajectories remain closer, on average, to the deterministic equilibrium.

As will be shown in the numerical section, even in the case where the deterministic system possesses a stable equilibrium point (for instance when it lies on $S_{l}$ or $S_{r}$), the stochastic solution may jump to $S_{m}$. This event becomes less likely as $\left|x^{\ast}\right|$ increases and as the noise intensity $K$ decreases. In agreement with estimate \eqref{condi}, we will observe numerically that the constant appearing in the bound increases with the noise intensity.
\end{remark}

\section{Numerical Simulations}\label{sec4}
In this section, we illustrate several qualitative behaviours of the stochastic ICC model through numerical simulations that do not arise in the deterministic framework. In particular, stochastic perturbations may induce intermittent large excursions in the attractive regime, generate mixed-mode oscillations in parameter regions where the deterministic system exhibits smooth relaxation oscillations, and destroy the strict periodicity of deterministic MMOs.

All simulations are performed for the linear stochastic model \eqref{lineal}, which preserves the slow-fast structure of the deterministic ICC system.

Since the ICC model is a stiff slow-fast system, numerical schemes with good stability properties are required. Classical It{\^o}--Taylor methods, such as the Euler-Maruyama scheme, are not well suited to accurately capture this type of multiscale dynamics for practical time steps. For this reason, we employ the stochastic TR-BDF2 method introduced in \cite{Caraballo2026}, which attains strong order two and preserves the favourable stability properties of its deterministic counterpart.

For completeness, we consider a uniform partition of the interval $\left[0, T\right]$, with $t_{0} = 0 < t_{1} = t_{0} + h < \cdots < t_{n} = t_{n - 1} + h < t_{N} = T$, for some $N \in \mathbb{N}$ and time step $h = \frac{T}{N}$. We recall that the stochastic TR-BDF2 method applied to the general SDE \eqref{sde} is written as \cite{Caraballo2026}
\begin{equation*}
\left\{\begin{array}{l}
\displaystyle \text{Given } Y_{0}, \text{ compute } \forall n = 0, \dots, N - 1,\\
\displaystyle Y_{n + \gamma} = Y_{n} + \frac{1}{2}\left(f\left(t_{n}, Y_{n}\right) + f\left(t_{n + \gamma}, Y_{n + \gamma}\right)\right)h\gamma + \hat{P}_{n}\\
\displaystyle Y_{n + 1} = \gamma_{3}Y_{n + \gamma} + \left(1 - \gamma_{3}\right)Y_{n} + f\left(t_{n + 1}, Y_{n + 1}\right)h\gamma_{2} + \hat{Q}_{n} + \hat{Q}_{n + \gamma},
\end{array}\right.
\end{equation*}
\noindent where $Y_{n}$ is the approximation of $U_{t_{n}}$, $Y_{n + \gamma}$ is an auxiliary step, $t_{n + \gamma} = t_{n} + h\gamma$, $\gamma \in \left(0, 1\right)$, $\gamma_{2} = \frac{1 - \gamma}{2 - \gamma}$ and $\gamma_{3} = \frac{1}{\gamma\left(2 - \gamma\right)}$. Moreover, $\hat{P}_{n}$, $\hat{Q}_{n}$ and $\hat{Q}_{n + \gamma}$ are the stochastic correction terms involving stochastic integrals and terms arising from the It{\^o}--Taylor expansion of order two, required to achieve strong order two; see \cite{Caraballo2026} for details.

This scheme is a stochastic extension of the deterministic TR-BDF2 method (if $g \equiv 0$, then $\hat{P}_{n}$, $\hat{Q}_{n}$ and $\hat{Q}_{n + \gamma}$ vanish). From now on, we take $\gamma = 2 - \sqrt{2}$, which minimizes the local truncation error of the deterministic TR-BDF2 scheme. We recall that the deterministic TR-BDF2 scheme is second-order accurate and is $A$-stable and $L$-stable; see \cite{Bank1985, Hairer2010, Hosea1996}.

The stochastic TR-BDF2 attains strong order two of convergence and inherits the favourable stability properties of its deterministic counterpart. In particular, it is $A$-stable for the additive noise test equation and mean-square stable for sufficiently small time steps. Moreover, numerical evidence suggests that it exhibits improved mean-square stability properties compared with the It{\^o}--Taylor approximation of order two.

All tests were performed on a personal computer (MacBook Air, M1, 8GB, 2020) using MATLAB$\_$R2024b software.

For the stochastic model \eqref{lineal}, we fix $\left(X_{0}, Y_{0}, Z_{0}\right)^{\top} = \left(- 1, - 4, 1\right)^{\top}$ as the initial condition. To analyse the effect of the stochastic perturbations, we perform simulations activating a single noise coefficient at a time, that is, taking $\sigma_{i} \neq 0$ and $\sigma_{j} = 0$ for $j \neq i$, with $i, j \in \left\{1, \dots, 4\right\}$. Since the qualitative behaviour obtained in these four cases is similar, we only display the results corresponding to the representative case in which $\sigma_{1} \neq 0$ and $\sigma_{2} = \sigma_{3} = \sigma_{4} = 0$.

In all experiments, the realizations of the driving Wiener processes are fixed in order to ensure a consistent comparison between scenarios. More precisely, the same realization of $W_{t}^{1}$ is used across all simulations involving the first noise component, and analogously for $W_{t}^{2}$. The two Wiener processes are generated independently.

\subsection{Noise-induced jumps in the attractive regime}
We first consider the attractive regime in which the deterministic ICC model admits a unique globally asymptotically stable equilibrium. For $\mu = 2.5$, all deterministic trajectories converge to this equilibrium (see Figure~\ref{Fig1a}).

In the stochastic framework, Theorem~\ref{th} provides bounds for the average oscillations around the deterministic equilibrium. Nevertheless, as noted in Remark~\ref{remark2}, stochastic perturbations may occasionally induce qualitative changes in the dynamics. In particular, random fluctuations may drive the trajectory from the stable sheets $S_{l}$ or $S_{r}$ of the critical manifold into the unstable middle sheet $S_{m}$. Owing to the slow-fast structure of the system, once such a transition occurs the solution undergoes a fast excursion before returning to a neighbourhood of the equilibrium, producing a noise-induced jump that is absent in the deterministic attractive regime.

This behaviour is illustrated in Figure~\ref{fig:22}, where we show simulations for the representative case with a single active noise component. For the noise intensity shown in the left panel, the solution stays close to equilibrium for long time intervals and only sporadically exhibits large excursions. When the noise level is doubled (right panel), these transitions become more frequent and the intermittent character of the dynamics becomes more pronounced.

\begin{figure}[ht]
\centering
\begin{subfigure}[ht]{0.45\textwidth}
\centering
\includegraphics[scale=0.28]{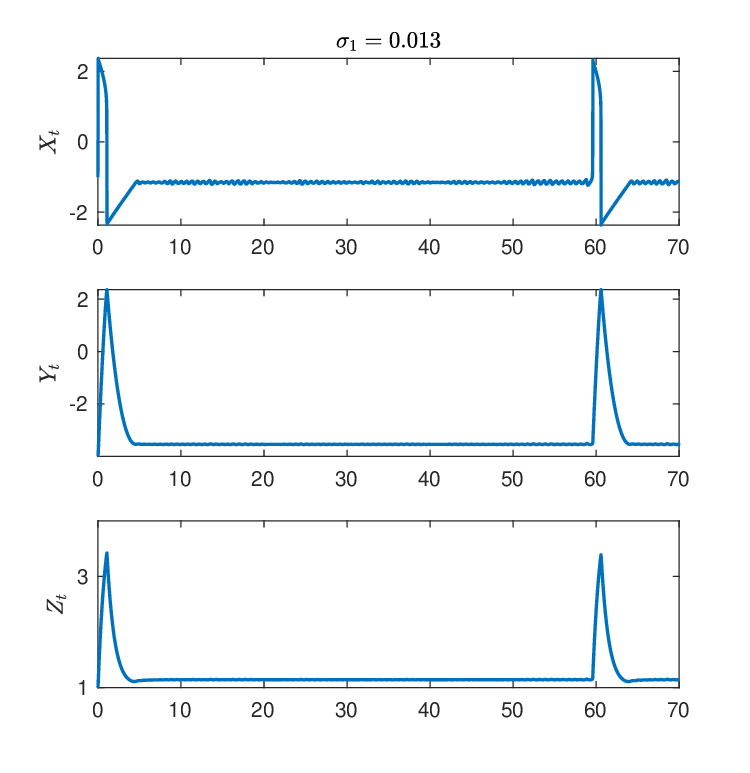}
\caption{$\sigma_{1} = 0.013$, $\sigma_{j} = 0$ for $j \neq 1$.}
\end{subfigure}
\hfill
\begin{subfigure}[ht]{0.45\textwidth}
\centering
\includegraphics[scale=0.28]{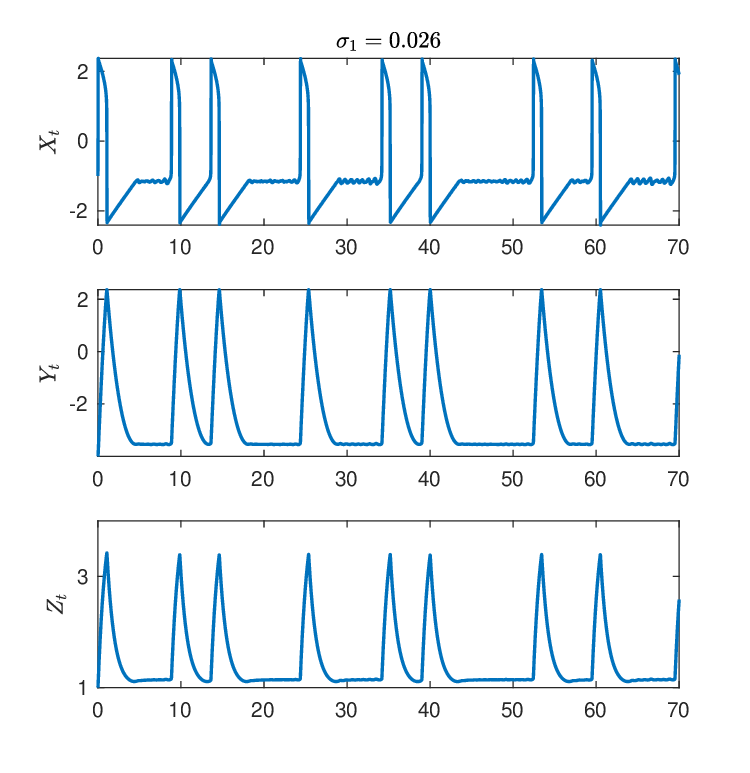}
\caption{$\sigma_{1} = 0.026$, $\sigma_{j} = 0$ for $j \neq 1$.}
\end{subfigure}
\caption{Noise-induced jumps in the attractive regime ($\mu = 2.5$) for model \eqref{lineal}. Left: reference noise intensity. Right: doubled noise intensity.}\label{fig:22}
\end{figure}

The value of $\sigma_{1}$ in the left panel corresponds to the smallest intensity for which a jump is observed within the time interval $\left[0, T\right]$ (with $T = 70$) under the fixed realizations of the Wiener processes described above. Depending on the particular realization and on the observation window, such excursions may occur earlier, later, or not at all.

These simulations show that, although the deterministic dynamics enforces convergence towards a stable equilibrium, stochastic perturbations allow the system to explore dynamically inaccessible regions of phase space. Moreover, the frequency of large excursions increases with the noise intensity, in agreement with the bounds in Theorem~\ref{th}.

\subsection{Noise effects in the unstable equilibrium regime}\label{nume}
We next consider parameter regimes in which the deterministic ICC model exhibits an unstable equilibrium. Two representative cases are shown in Figure~\ref{fig:periodic2}. For $\mu = 2.4$, the equilibrium lies on the middle sheet $S_{m}$ close to a fold curve and the deterministic dynamics generates strictly periodic mixed-mode oscillations (MMOs), as shown in Figure~\ref{Fig1c2}. For $\mu = 2.21$, the deterministic system instead converges to a smooth relaxation limit cycle, as illustrated in Figure~\ref{Fig1b}.

In the stochastic framework, these behaviours are qualitatively modified. For $\mu = 2.4$ (left panel of Figure~\ref{fig:periodic2}), stochastic perturbations destroy the strict periodicity of deterministic MMOs. Although the alternating structure of small- and large-amplitude oscillations is preserved, the duration of the small-amplitude phase and the timing of the large excursions vary from cycle to cycle, and the number of small oscillations preceding each large excursion is no longer constant.

\begin{figure}[ht]
\centering
\begin{subfigure}[ht]{0.45\textwidth}
\centering
\includegraphics[scale=0.28]{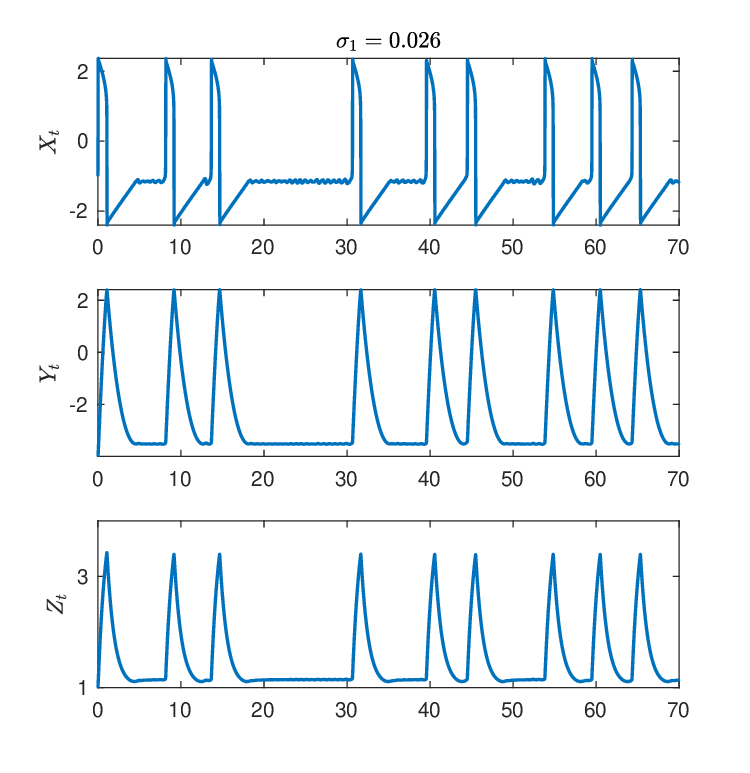}
\caption{$\sigma_{1} = 0.026$, $\sigma_{j} = 0$ for $j \neq 1$.}
\end{subfigure}
\hfill
\begin{subfigure}[ht]{0.45\textwidth}
\centering
\includegraphics[scale=0.28]{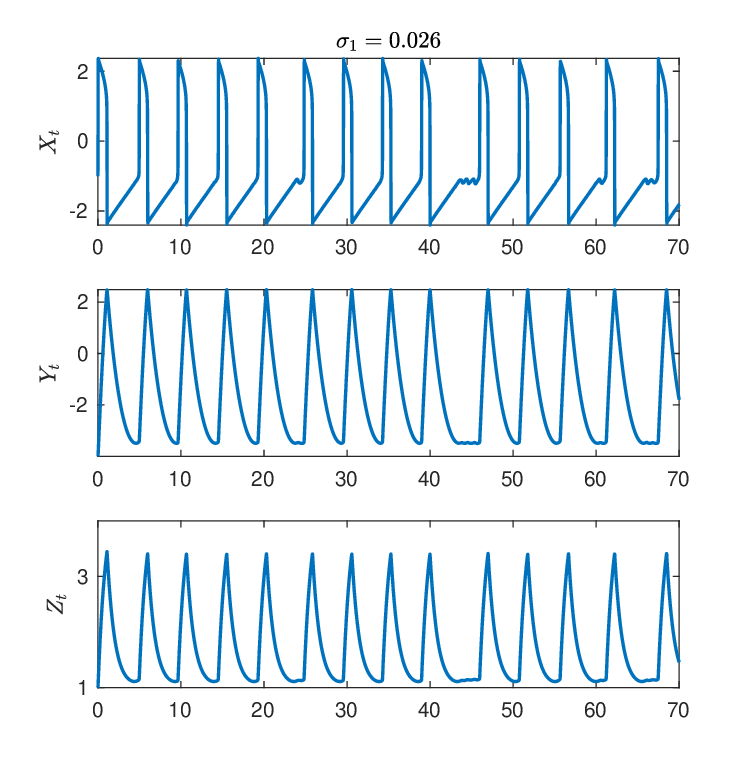}
\caption{$\sigma_{1} = 0.026$, $\sigma_{j} = 0$ for $j \neq 1$.}
\end{subfigure}
\caption{Noise effects in the unstable equilibrium regime for model \eqref{lineal}. Left: loss of periodicity of MMOs ($\mu = 2.4$). Right: noise-induced MMOs in the relaxation regime ($\mu = 2.21$).}\label{fig:periodic2}
\end{figure}

For $\mu = 2.21$ (right panel), stochastic forcing induces mixed-mode oscillations in a parameter regime where the deterministic model exhibits only smooth relaxation oscillations. The resulting trajectories display irregular sequences of small oscillations followed by large excursions, reflecting the influence of the noise.

As in the previous subsection, we show simulations for the representative case with a single active noise component, which clearly illustrates the qualitative changes in the oscillatory patterns.

These simulations show that stochastic perturbations may both destroy deterministic periodicity and induce mixed-mode dynamics in parameter regimes where such behaviour is absent in the deterministic model.

\subsection{Numerical validation of the theoretical estimates}
We now provide a numerical validation of the estimate obtained in Theorem~\ref{th} and discussed in Remark~\ref{remark2}. In the deterministic attractive regime, the limit in \eqref{condi} tends to zero when the equilibrium is globally asymptotically stable, meaning that the time-averaged squared distance to the equilibrium vanishes.

In the stochastic framework, this limit is no longer expected to vanish; however, for sufficiently small noise intensities the solution should remain close, on average, to its deterministic counterpart over long time intervals.

To approximate numerically the quantity in \eqref{condi}, we consider a uniform partition of the interval $\left[0, T\right]$, given by $t_{0} = 0 < t_{1} = t_{0} + h < \cdots < t_{n} = t_{n - 1} + h < t_{N} = T$, for some $N \in \mathbb{N}$ and time step $h = \frac{T}{N}$. For each realization of the Wiener processes, we compute the discrete-time average of the squared distance to the deterministic equilibrium and then average over $M$ independent realizations. More precisely, we define the following empirical estimator:
\begin{equation}
\begin{array}{ll}
\displaystyle \hat{C}_{t_{n}} \coloneqq \frac{1}{t_{n}}\left(\frac{1}{M}\sum_{j = 1}^{M}\left(h\sum_{k = 1}^{n}\left\|\hat{\xi}_{t_{k}}^{j} - \left(x^{\ast}, y^{\ast}, z^{\ast}\right)^{\top}\right\|^{2}\right)\right), & \displaystyle n = 1, \dots, N,
\end{array}
\label{Ctn}
\end{equation}
\noindent which provides an empirical approximation of the bound appearing in \eqref{condi}. Here, $\hat{\xi}_{t_{k}}^{j}$ denotes the numerical approximation of the solution at time $t_{k}$ corresponding to the $j$-th realization of the Wiener processes.

We first consider the deterministic case in the attractive regime $\mu = 2.5$, using a large final time $T = 1000$ in order to approximate the asymptotic behaviour predicted by the theorem. The evolution of $\hat{C}_{t_{n}}$ is displayed in Figure~\ref{fig:detC}. The obtained value at the final time is $\hat{C}_{T} \approx 0.051771$, which is small and decreases as $T$ increases. This is consistent with the theoretical prediction that the limit in \eqref{condi} vanishes in the deterministic setting.

We next perform the same computation in the stochastic framework, considering $M = 50$ independent realizations and increasing values of the noise intensity $\sigma_{1}$ while setting $\sigma_{2} = \sigma_{3} = \sigma_{4} = 0$. The results are shown in Figure~\ref{fig:stochC}.

\begin{figure}[ht]
\centering
\begin{subfigure}[ht]{0.45\textwidth}
\centering
\includegraphics[scale=0.32]{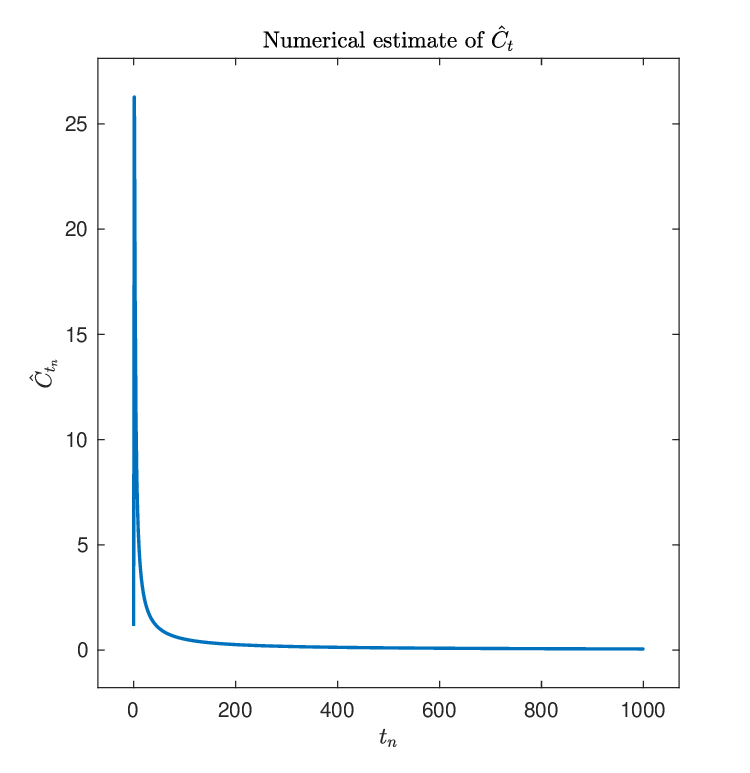}
\caption{Deterministic case ($\mu = 2.5$).}\label{fig:detC}
\end{subfigure}
\hfill
\begin{subfigure}[ht]{0.45\textwidth}
\centering
\includegraphics[scale=0.32]{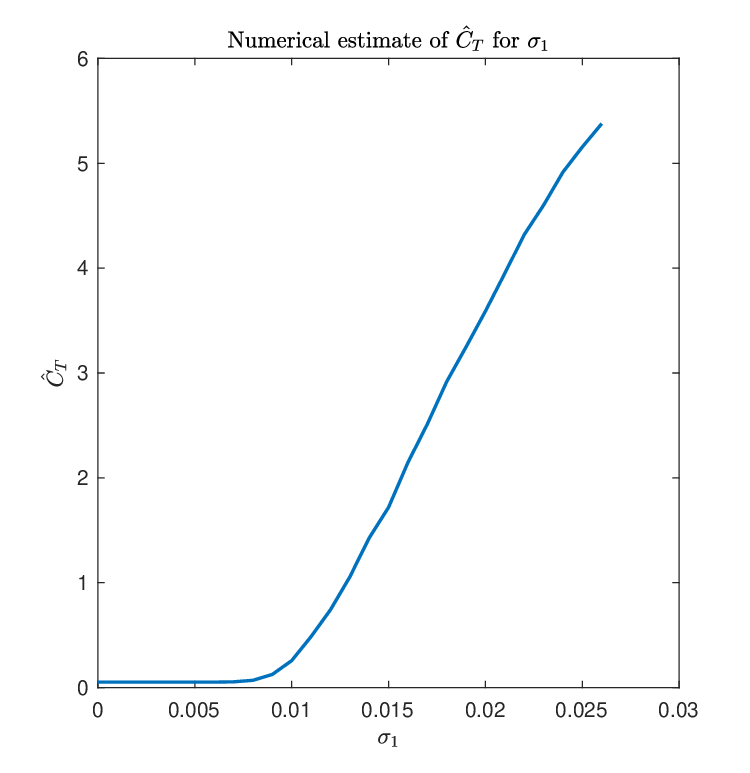}
\caption{Stochastic case with $\sigma_{1} \in \left[0, 0.026\right]$.}\label{fig:stochC}
\end{subfigure}
\caption{Numerical approximation of the quantity in \eqref{condi} in the attractive regime ($\mu = 2.5$). Left: deterministic model. Right: stochastic model with increasing noise intensity $\sigma_{1}$.}
\end{figure}

As expected, the estimated quantity increases with the noise intensity, in agreement with the bound in \eqref{condi}, where the constant depends explicitly on the noise level. 

Moreover, for larger values of $\mu$, the modulus of the equilibrium component $\left|x^{\ast}\right|$ increases, making noise-induced excursions less likely and leading to smaller empirical values. This is consistent with the discussion in Remark~\ref{remark2}, where the position of the equilibrium relative to the fold curves is shown to influence the magnitude of the bound in \eqref{condi}.

\section{The 6D ICC model for the dynamics in two coupled cells}\label{sec5}
To study a more realistic setting, we extend the 3D model \eqref{ICC} to a pair of coupled cells interacting through a coupling term acting on the recovery variables and driven by the fast voltages. This framework can in fact be extended to two connected clusters of cells, exhibiting a behaviour similar to that of two coupled neurons \cite{Bandera2022}. We therefore first study the stochastic case of two coupled cells, starting from the corresponding deterministic model:
\begin{equation}
\left\{\begin{array}{l}
\displaystyle \left.\begin{array}{l}
\displaystyle \dot{x}_{1} = \tau\left(b_{0}y_{1} + b_{1}x_{1} - x_{1}^{3} - \phi_{f}\left(z_{1}\right)\right),\\
\displaystyle \dot{y}_{1} = \tau\varepsilon k_{1}\left(a_{0}x_{1} + a_{1}y_{1} + a_{2} + c\left(x_{1} - x_{2}\right)\right),\\
\displaystyle \dot{z}_{1} = \tau\varepsilon\left(\phi_{r}\left(x_{1}\right) - \frac{z_{1} - z_{b}}{\tau_{z}}\right),
\end{array}\displaystyle \right\} O_{1}\\
\displaystyle \left.\begin{array}{l}
\displaystyle \dot{x}_{2} = \tau\left(b_{0}y_{2} + b_{1}x_{2} - x_{2}^{3} - \phi_{f}\left(z_{2}\right)\right),\\
\displaystyle \dot{y}_{2} = \tau\varepsilon k_{2}\left(a_{0}x_{2} + a_{1}y_{2} + a_{2} + c\left(x_{2} - x_{1}\right)\right),\\
\displaystyle \dot{z}_{2} = \tau\varepsilon\left(\phi_{r}\left(x_{2}\right) - \frac{z_{2} - z_{b}}{\tau_{z}}\right),
\end{array}\displaystyle \right\} O_{2}\\
\displaystyle \left(x_{1}\left(0\right), y_{1}\left(0\right), z_{1}\left(0\right), x_{2}\left(0\right), y_{2}\left(0\right), z_{2}\left(0\right)\right)^{\top} \text{ is given}.
\end{array}\right.
\label{6d}
\end{equation}

Here $O_{1}$ and $O_{2}$ denote the models for the cells with variables $\left(x_{1}, y_{1}, z_{1}\right)^{\top}$ and $\left(x_{2}, y_{2}, z_{2}\right)^{\top}$ respectively. The parameters $k_{i} > 0$, $i = 1, 2$, account for possible heterogeneity between cells, while $c \in \left[- 1, 1\right]$ measures the coupling strength. Note that when $c = 0$ the interaction vanishes and each cell behaves independently as described in Section~\ref{sec2}.

From now on, we are mainly interested in the homogeneous case $k_{1} = k_{2} = 1$, which is studied in \cite{Fern2020}. The heterogeneous case $k_{i} \neq k_{j}$ and generalizations to larger networks are considered in \cite{Bandera2022}. In addition, in \cite{Bandera2026} different values of the coupling parameter across cells have been explored, leading to the appearance of further dynamical behaviours. This setting is also of considerable interest and will be addressed in future work.

However, the theoretical results are established for general $k_{i}$, $i = 1, 2$, and can likewise be extended to the case of distinct $c$. Our main objective is to analyse how stochastic perturbations affect the qualitative behaviours of the deterministic model. We fix the parameters of Section~\ref{sec2} for $\mu = 2.4$, where the uncoupled system exhibits MMOs.

In this context, we summarize the main asymptotic behaviours, which now depend on the coupling parameter $c$, considering the initial conditions $\left(r, 4r - r^{3}, 1\right)^{\top}$ with $r = - 1.25$ for $O_{1}$ and $r = - 1.75$ for $O_{2}$ (see \cite{Fern2020}):

\begin{enumerate}
\item \textit{Total oscillation death.} Both cells converge to a stable equilibrium for $c < - 0.509$ (Figure~\ref{Fig2a}, $c = - 0.514$).

\item \textit{Relaxation loss.} One cell produces trajectories that approach both outer sheets $S_{l}$ and $S_{r}$, while the other remains confined near $S_{l}$ performing small oscillations, for $c \in \left[- 0.509, - 0.501\right]$ (Figure~\ref{Fig2b}, $c = - 0.501$).

\item \textit{Antiphase synchronization.} Both cells converge asymptotically to the same periodic orbit with an approximate half-period phase shift between them. This occurs for $c \in \left(- 0.501, 0\right)$ (Figure~\ref{Fig2c}, $c = - 0.005$).

\item \textit{Uncoupled case.} For $c = 0$, both evolve independently, converging to the MMO regime of Section~\ref{sec2} (Figure~\ref{Fig2d}).

\item \textit{Almost-in-phase synchronization.} Both cells synchronize with a small phase shift, following an attractive relaxation limit cycle that generates MMOs. This regime is observed for $c \in \left(0, 0.716\right]$ (Figure~\ref{Fig2e}, $c = 0.1$).

\item \textit{In-phase locking synchronization.} Both cells converge asymptotically to the same periodic orbit with zero phase difference, coinciding with the cycle of the uncoupled case. This occurs for $c > 0.716$ (Figure~\ref{Fig2f}, $c = 1$).
\end{enumerate}

These regimes show that the coupling may suppress oscillations, reorganize the relaxation structure, or produce different synchronization patterns. In the following subsection we introduce stochastic perturbations of \eqref{6d} and analyse how noise modifies these deterministic behaviours.

\begin{figure}[ht]
\centering
\begin{subfigure}[ht]{0.45\textwidth}
\centering
\includegraphics[scale=0.28]{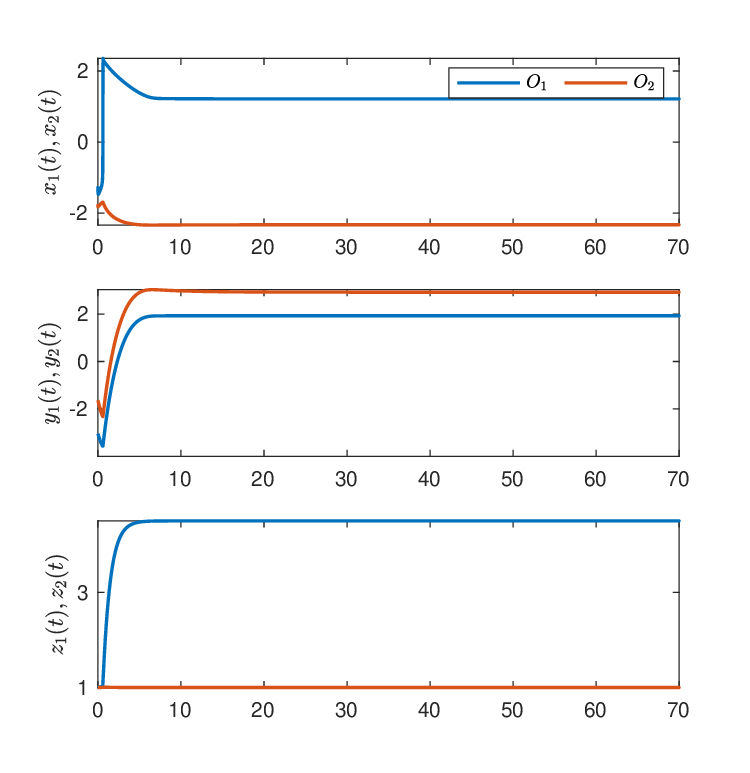}
\caption{Total oscillation death, $c = - 0.514$.}\label{Fig2a}
\end{subfigure}
\hfill
\begin{subfigure}[ht]{0.45\textwidth}
\centering
\includegraphics[scale=0.28]{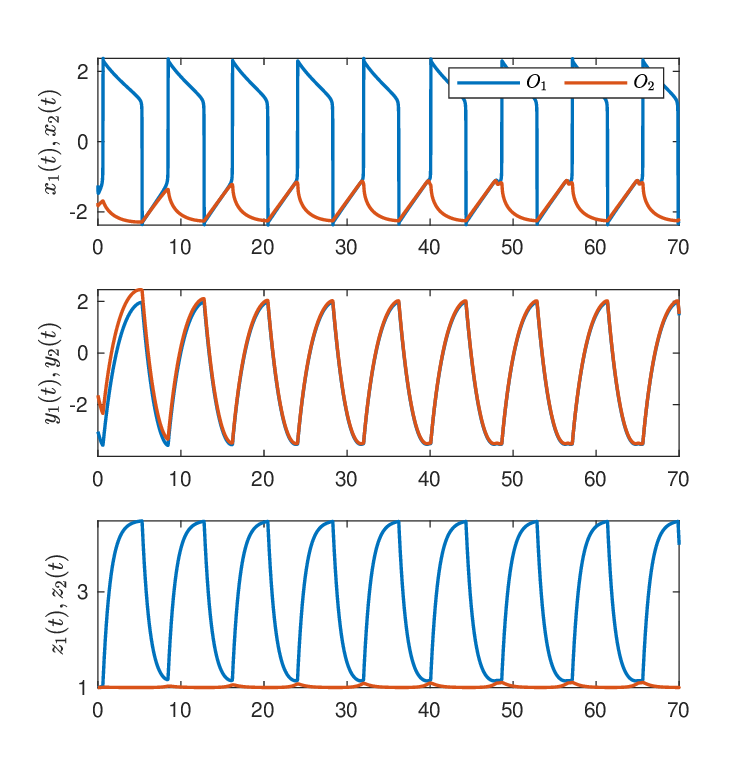}
\caption{Relaxation loss, $c = - 0.501$.}\label{Fig2b}
\end{subfigure}
\begin{subfigure}[ht]{0.45\textwidth}
\centering
\includegraphics[scale=0.28]{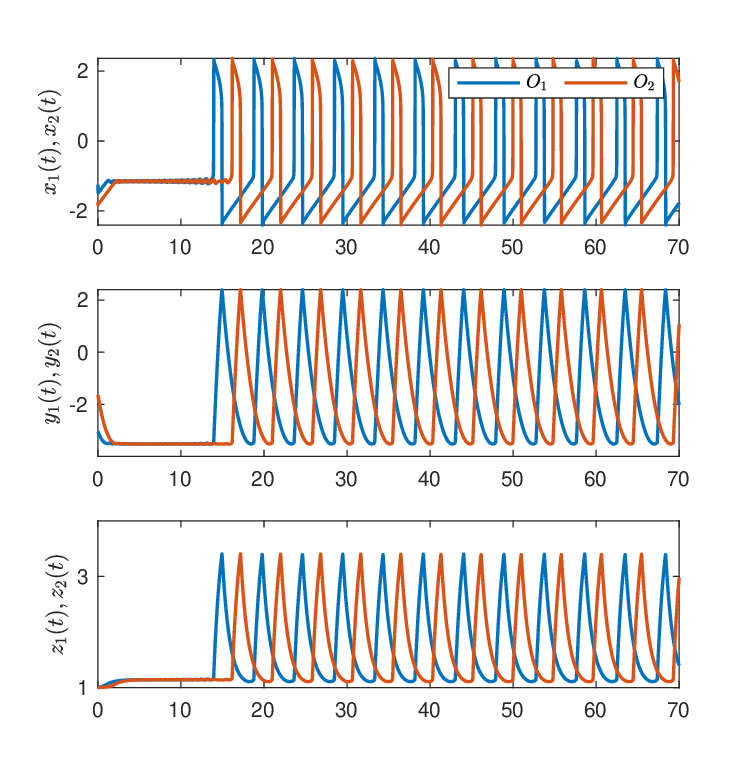}
\caption{Antiphase synchronization, $c = - 0.005$.}\label{Fig2c}
\end{subfigure}
\hfill
\begin{subfigure}[ht]{0.45\textwidth}
\centering
\includegraphics[scale=0.28]{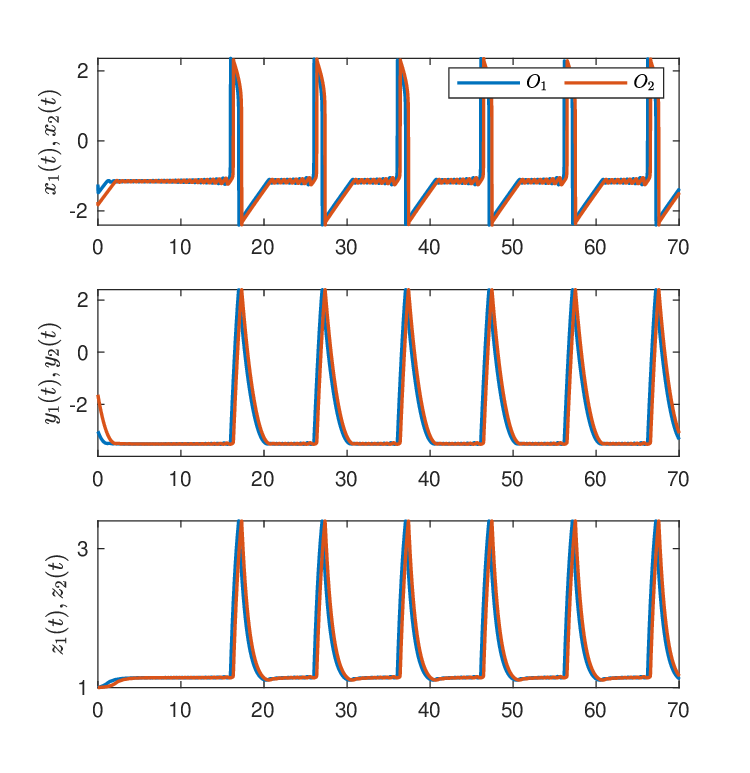}
\caption{Uncoupled case, $c = 0$.}\label{Fig2d}
\end{subfigure}
\caption{Asymptotic behaviours of the solution of model \eqref{6d} for $c \leq 0$.}
\end{figure}

\begin{figure}[ht]
\centering
\begin{subfigure}[ht]{0.45\textwidth}
\centering
\includegraphics[scale=0.28]{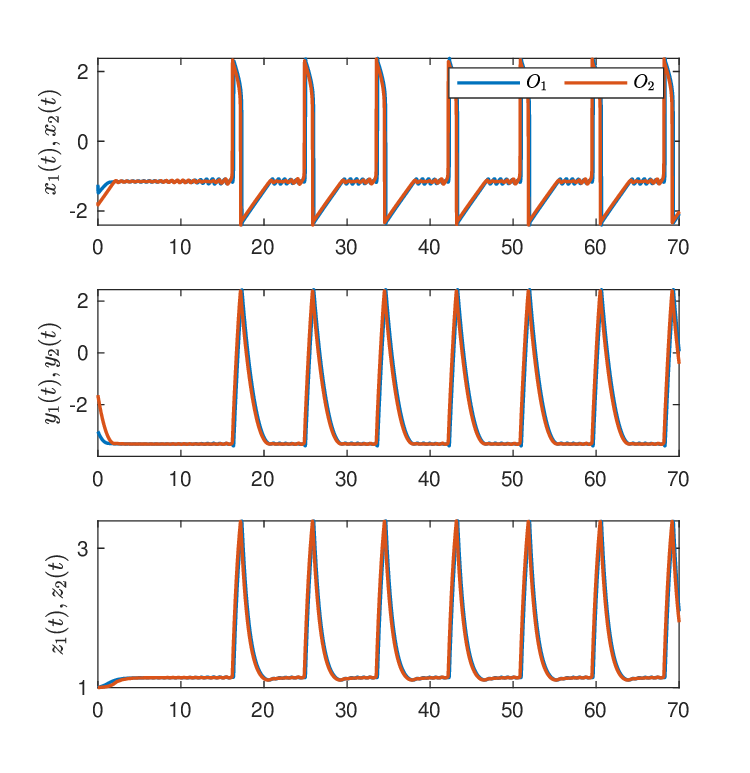}
\caption{Almost-in-phase synchronization, $c = 0.1$.}\label{Fig2e}
\end{subfigure}
\hfill
\begin{subfigure}[ht]{0.45\textwidth}
\centering
\includegraphics[scale=0.28]{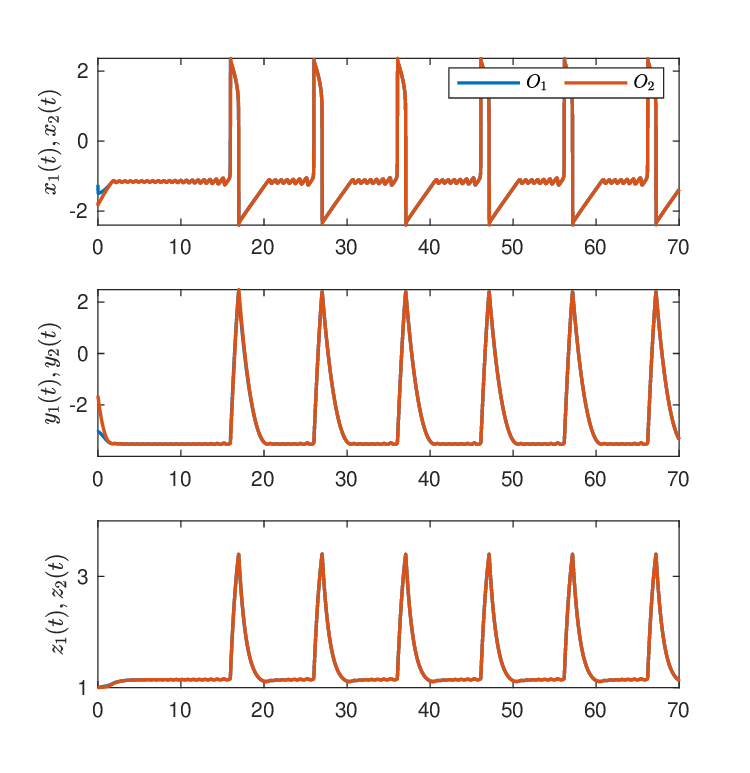}
\caption{In-phase locking synchronization, $c = 1$.}\label{Fig2f}
\end{subfigure}
\caption{Asymptotic behaviours of the solution of model \eqref{6d} for $c > 0$.}
\end{figure}

\subsection{Stochastic formulation of the 6D model}
To make the coupled two-cell ICC model more realistic, we now incorporate stochastic perturbations into the six-dimensional system. As in the single-cell case, this allows us to account for random fluctuations arising from intrinsic variability and uncertain physiological effects that are not captured by the purely deterministic formulation. Moreover, the stochastic framework makes it possible to observe qualitative behaviours that do not arise in the deterministic setting, such as noise-induced transitions between dynamical regimes or modifications in the synchronization patterns of the coupled cells.

Let $\Xi_{t} = \left(X_{1, t}, Y_{1, t}, Z_{1, t}, X_{2, t}, Y_{2, t}, Z_{2, t}\right)^{\top}$. We consider the following stochastic version of the coupled model \eqref{6d}:
\begin{equation}
\left\{\begin{array}{l}
\displaystyle \mathrm{d}X_{1, t} = \tau\left(b_{0}Y_{1, t} + b_{1}X_{1, t} - X_{1, t}^{3} - \phi_{f}\left(Z_{1, t}\right)\right)\mathrm{d}t + G_{1}\left(\Xi_{t}\right)\mathrm{d}W_{t}^{1},\\
\displaystyle \mathrm{d}Y_{1, t} = \tau\varepsilon k_{1}\left(a_{0}X_{1, t} + a_{1}Y_{1, t} + a_{2} + c\left(X_{1, t} - X_{2, t}\right)\right)\mathrm{d}t + \sqrt{\varepsilon}G_{2}\left(\Xi_{t}\right)\mathrm{d}W_{t}^{2},\\
\displaystyle \mathrm{d}Z_{1, t} = \tau\varepsilon\left(\phi_{r}\left(X_{1, t}\right) - \frac{Z_{1, t} - z_{b}}{\tau_{z}}\right)\mathrm{d}t,\\
\displaystyle \mathrm{d}X_{2, t} = \tau\left(b_{0}Y_{2, t} + b_{1}X_{2, t} - X_{2, t}^{3} - \phi_{f}\left(Z_{2, t}\right)\right)\mathrm{d}t + G_{3}\left(\Xi_{t}\right)\mathrm{d}W_{t}^{3},\\
\displaystyle \mathrm{d}Y_{2, t} = \tau\varepsilon k_{2}\left(a_{0}X_{2, t} + a_{1}Y_{2, t} + a_{2} + c\left(X_{2, t} - X_{1, t}\right)\right)\mathrm{d}t + \sqrt{\varepsilon}G_{4}\left(\Xi_{t}\right)\mathrm{d}W_{t}^{4},\\
\displaystyle \mathrm{d}Z_{2, t} = \tau\varepsilon\left(\phi_{r}\left(X_{2, t}\right) - \frac{Z_{2, t} - z_{b}}{\tau_{z}}\right)\mathrm{d}t,\\
\displaystyle \Xi_{0} \text{ is given},
\end{array}\right.
\label{S6d}
\end{equation}
\noindent where $W_{t}^{1}$, $W_{t}^{2}$, $W_{t}^{3}$ and $W_{t}^{4}$ are independent scalar Wiener processes. The functions $G_{1}$, $G_{2}$, $G_{3}$ and $G_{4}$ describe the intensity of the stochastic perturbations and, as in the three-dimensional case, they are assumed to satisfy suitable regularity and growth conditions ensuring existence and uniqueness of solutions, as well as the asymptotic estimates established below.

In particular, for the numerical simulations we will focus on the case in which the stochastic perturbations act on the linear terms of the first two equations of each cell. Moreover, since we take the same linear coefficients in both cells, the same Wiener processes naturally appear in both equations. More precisely, by formally replacing the linear parameters with stochastic perturbations as in the three-dimensional case and replacing $\varepsilon c$ with $\varepsilon c + \sqrt{\varepsilon}\frac{\sigma_{5}}{\tau k}\frac{\mathrm{d}W_{t}^{2}}{\mathrm{d}t}$, we obtain the following six-dimensional linear stochastic model:
\begin{equation}
\left\{\begin{array}{l}
\displaystyle \mathrm{d}X_{1, t} = \tau\left(b_{0}Y_{1, t} + b_{1}X_{1, t} - X_{1, t}^{3} - \phi_{f}\left(Z_{1, t}\right)\right)\mathrm{d}t + \left(\sigma_{1}X_{1, t} + \sigma_{2}Y_{1, t}\right)\mathrm{d}W_{t}^{1},\\
\displaystyle \mathrm{d}Y_{1, t} = \tau\varepsilon k_{1}\left(a_{0}X_{1, t} + a_{1}Y_{1, t} + a_{2} + c\left(X_{1, t} - X_{2, t}\right)\right)\mathrm{d}t\\
\displaystyle \mathrel{\phantom{\mathrm{d}Y_{1, t} = }} + \sqrt{\varepsilon}\left(\sigma_{3}X_{1, t} + \sigma_{4}Y_{1, t} + \sigma_{5}\left(X_{1, t} - X_{2, t}\right)\right)\mathrm{d}W_{t}^{2},\\
\displaystyle \mathrm{d}Z_{1, t} = \tau\varepsilon\left(\phi_{r}\left(X_{1, t}\right) - \frac{Z_{1, t} - z_{b}}{\tau_{z}}\right)\mathrm{d}t,\\
\displaystyle \mathrm{d}X_{2, t} = \tau\left(b_{0}Y_{2, t} + b_{1}X_{2, t} - X_{2, t}^{3} - \phi_{f}\left(Z_{2, t}\right)\right)\mathrm{d}t + \left(\sigma_{1}X_{2, t} + \sigma_{2}Y_{2, t}\right)\mathrm{d}W_{t}^{1},\\
\displaystyle \mathrm{d}Y_{2, t} = \tau\varepsilon k_{2}\left(a_{0}X_{2, t} + a_{1}Y_{2, t} + a_{2} + c\left(X_{2, t} - X_{1, t}\right)\right)\mathrm{d}t\\
\displaystyle \mathrel{\phantom{\mathrm{d}Y_{2, t} = }} + \sqrt{\varepsilon}\left(\sigma_{3}X_{2, t} + \sigma_{4}Y_{2, t} + \sigma_{5}\left(X_{2, t} - X_{1, t}\right)\right)\mathrm{d}W_{t}^{2},\\
\displaystyle \mathrm{d}Z_{2, t} = \tau\varepsilon\left(\phi_{r}\left(X_{2, t}\right) - \frac{Z_{2, t} - z_{b}}{\tau_{z}}\right)\mathrm{d}t,\\
\displaystyle \Xi_{0} \text{ is given},
\end{array}\right.
\label{lineal6d}
\end{equation}

This particular choice preserves the slow-fast structure of the deterministic coupled system and provides a natural extension of the linear stochastic perturbations considered in the single-cell framework.

We now state the main theoretical results for the stochastic coupled model \eqref{S6d}. These results extend those obtained for the three-dimensional case. Their proofs follow essentially the same arguments, the only difference being the presence of the additional coupling terms in the equations for $Y_{1, t}$ and $Y_{2, t}$.

As a first result, we show that the stochastic coupled model preserves the positivity of the calcium variables whenever the initial conditions are positive.

\begin{theorem}[Positivity of the variables $Z_{1, t}$ and $Z_{2, t}$]
Let $Z_{1, 0}, Z_{2, 0} \in \mathbb{R}_{+}$. Then, the third and sixth components associated with model \eqref{S6d} satisfy
\begin{equation*}
\begin{array}{ll}
\displaystyle Z_{1, t}, Z_{2, t} \in \mathbb{R}_{+}, & \displaystyle \forall t \in \left[0, T\right].
\end{array}
\end{equation*}
\end{theorem}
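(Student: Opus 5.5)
The argument repeats the proof of Theorem~\ref{posi} componentwise. In model \eqref{S6d} the equations for $Z_{1,t}$ and $Z_{2,t}$ carry no diffusion term, and they do not involve the coupling parameter $c$. The only other variables entering them are $X_{1,t}$ and $X_{2,t}$, respectively. So for each $i\in\{1,2\}$ and each fixed $\omega$, the process $Z_{i,t}$ solves the pathwise linear ordinary differential equation
\begin{equation*}
\mathrm{d}Z_{i,t} = \tau\varepsilon\left(\phi_{r}\left(X_{i,t}\right) + \frac{z_{b}}{\tau_{z}}\right)\mathrm{d}t - \frac{\tau\varepsilon}{\tau_{z}}Z_{i,t}\,\mathrm{d}t .
\end{equation*}
Its forcing term $\phi_r(X_{i,t})$ is continuous in $t$, because $X_{i,t}$ has continuous paths and $\phi_r$ is continuous.

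The key step is to solve this equation with the integrating factor $e^{\frac{\tau\varepsilon}{\tau_z}t}$, exactly as in \eqref{Zt}. This gives
\begin{equation*}
Z_{i,t} = \left(\int_{0}^{t}\tau\varepsilon\left(\phi_{r}\left(X_{i,s}\right) + \frac{z_{b}}{\tau_{z}}\right)e^{\frac{\tau\varepsilon}{\tau_{z}}s}\mathrm{d}s + Z_{i,0}\right)e^{-\frac{\tau\varepsilon}{\tau_{z}}t}, \qquad i=1,2.
\end{equation*}
Positivity then follows term by term. First, $\phi_r(x)=\lambda/(1+\exp(-\rho(x-x_{on})))>0$ for every real $x$. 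Second, $z_b,\tau_z,\tau,\varepsilon>0$. Third, $Z_{i,0}>0$ by hypothesis, and the exponential factors are positive. Hence the integral is nonnegative and the bracket is strictly positive, so $Z_{i,t}>0$ for all $t\in[0,T]$.

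The only delicate point is the validity of this representation: it requires the solution of \eqref{S6d} to exist on $[0,T]$ with continuous $X_{i,t}$. I would invoke the six-dimensional analogue of Theorem~\ref{EUICC} for this, under the growth assumptions on $G_1,\dots,G_4$. If needed, I would run the argument up to any local existence time, which is enough since positivity holds on every such interval. There is also a potential circularity, because $\phi_f$ is only controlled for $z>0$. This is resolved by the same formula: as long as the solution exists, it keeps $Z_{i,t}$ positive, so the trajectory never leaves $\mathbb{R}^2\times\mathbb{R}_+\times\mathbb{R}^2\times\mathbb{R}_+$.
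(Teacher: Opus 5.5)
Your proposal is correct and follows essentially the same route as the paper, which also notes that the $Z_{i,t}$ equations carry no noise, solves them explicitly with the integrating-factor formula \eqref{Zt} from Theorem~\ref{posi}, and concludes positivity from $\phi_r>0$, $z_b>0$ and $Z_{i,0}>0$. Your extra remarks are a sound refinement that the paper leaves implicit: that the representation holds pathwise on any existence interval, and that it keeps the trajectory in the region $z>0$ where $\phi_f$ is controlled.
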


\begin{proof}
The result follows exactly as in the three-dimensional case, since the equations for $Z_{1, t}$ and $Z_{2, t}$ are deterministic once $X_{1, t}$ and $X_{2, t}$ are given, and can be solved explicitly. Arguing as in the proof of Theorem~\ref{posi} for the variable $Z_{t}$ in the single-cell model, one obtains that both $Z_{1, t}$ and $Z_{2, t}$ remain positive for all $t \in \left[0, T\right]$ whenever $Z_{1, 0}, Z_{2, 0} > 0$.
\end{proof}

We next state an existence and uniqueness result for the stochastic coupled system.

\begin{theorem}[Existence and uniqueness for the 6D stochastic model]\label{EU6D}
Assume that the functions $G_{1}$, $G_{2}$, $G_{3}$ and $G_{4}$ are locally Lipschitz and satisfy the global linear growth condition
\begin{equation*}
\frac{1}{2}G_{1}\left(\Xi\right)^{2} + \frac{1}{2}\varepsilon G_{2}\left(\Xi\right)^{2} + \frac{1}{2}G_{3}\left(\Xi\right)^{2} + \frac{1}{2}\varepsilon G_{4}\left(\Xi\right)^{2} \leq K\left(1 + \left\|\Xi\right\|^{2}\right),
\end{equation*}
\noindent for all $\Xi \in \mathcal{R} \times \mathcal{R}$, for some constant $K > 0$.

Then, for any given initial value $\Xi_{0} \in \mathcal{R} \times \mathcal{R}$, the stochastic coupled model \eqref{S6d} admits a unique strong solution
\begin{equation*}
\begin{array}{ll}
\displaystyle \Xi_{t} \in \mathcal{M}^{2}\left(\left[0, T\right]; \mathcal{R} \times \mathcal{R}\right), \displaystyle \forall t \in \left[0, T\right].
\end{array}
\end{equation*}
\end{theorem}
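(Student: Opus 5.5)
The plan is to apply Theorem~\ref{eu} with $d = 6$ and $m = 4$, repeating the argument of Theorem~\ref{EUICC} cell by cell. The diffusion matrix $g(\Xi)$ is the $6 \times 4$ matrix whose nonzero entries are $G_{1}$ in position $(1,1)$, $\sqrt{\varepsilon}G_{2}$ in $(2,2)$, $G_{3}$ in $(4,3)$ and $\sqrt{\varepsilon}G_{4}$ in $(5,4)$. Hence $\frac{1}{2}\left\|g\right\|^{2}$ is exactly the left-hand side of the assumed growth condition.

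\textbf{Measurability and local Lipschitz continuity.} Measurability is immediate, since all coefficients are continuous and autonomous. For the local Lipschitz condition:
\begin{itemize}
\item the drift consists of polynomials, of $\phi_{r}$ (which has bounded derivative on $\mathbb{R}$), and of $\phi_{f}$ (which has bounded derivative $\frac{\mu z_{0}}{(z+z_{0})^{2}} \leq \frac{\mu}{z_{0}}$ for $z > 0$);
\item the new coupling terms $\tau\varepsilon k_{i}c(X_{i} - X_{j})$ are linear;
\item the functions $G_{i}$ are locally Lipschitz by hypothesis.
\end{itemize}
Positivity of $Z_{1,t}$ and $Z_{2,t}$ is already guaranteed by the preceding positivity theorem, so the solution stays in $\mathcal{R} \times \mathcal{R}$, where $\phi_{f}$ is well behaved. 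As in Theorem~\ref{EUICC}, I would use this to justify working on that domain. If needed, $\phi_{f}$ can be extended to a globally Lipschitz function on $z \leq 0$; the solution never visits that region, so the extension does not change it.

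\textbf{Growth bound.} I would write $\Xi^{\top}f(\Xi)$ as the sum of the two single-cell expressions already bounded in the proof of Theorem~\ref{EUICC}, with $k$ replaced by $k_{i}$, plus the coupling contributions
\begin{equation*}
\tau\varepsilon k_{1}c\,Y_{1}(X_{1} - X_{2}) + \tau\varepsilon k_{2}c\,Y_{2}(X_{2} - X_{1}).
\end{equation*}
Each single-cell part is bounded by constant terms plus quadratic terms, exactly as in the terms $I$--$IV$ before. The negative quartic term $-\tau X_{i}^{4}$ and the term $-\frac{\tau\varepsilon}{\tau_{z}}Z_{i}^{2}$ are discarded.

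For the coupling terms I would apply the right inequality of \eqref{ine} with $\delta = 1$ to each product, after bounding $|c| \leq 1$. For example,
\begin{equation*}
|\tau\varepsilon k_{1}c\,Y_{1}X_{2}| \leq \frac{\tau\varepsilon k_{1}}{2}\left(Y_{1}^{2} + X_{2}^{2}\right),
\end{equation*}
and the products $Y_{1}X_{1}$, $Y_{2}X_{2}$, $Y_{2}X_{1}$ are handled the same way. These yield only terms quadratic in the components of $\Xi$. Collecting everything gives
\begin{equation*}
\Xi^{\top}f(\Xi) \leq K_{0}\left(1 + \left\|\Xi\right\|^{2}\right).
\end{equation*}
Adding the assumed bound on $\frac{1}{2}\left\|g\right\|^{2}$ gives the monotone condition with constant $K^{\ast} = K_{0} + K$. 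Theorem~\ref{eu} then provides the unique solution in $\mathcal{M}^{2}$.

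\textbf{Main obstacle.} The only non-routine point is not the coupling, which is harmless. It is the domain issue: Theorem~\ref{eu} is stated on all of $\mathbb{R}^{6}$, while $\phi_{f}$ is singular at $z = -z_{0}$. I would resolve it through the Lipschitz extension of $\phi_{f}$ described above, combined with the positivity theorem, so that the solution of the extended system coincides with that of \eqref{S6d} and remains in $\mathcal{R} \times \mathcal{R}$.
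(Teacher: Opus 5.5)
Your proposal is correct and follows the same route as the paper. Both verify the hypotheses of Theorem~\ref{eu}, note that the coupling terms are linear, and absorb the extra products $\tau\varepsilon k_{i}c\,Y_{i}(X_{i}-X_{j})$ into quadratic terms via \eqref{ine} on top of the single-cell estimate of Theorem~\ref{EUICC}. Your extension of $\phi_{f}$ to $z \leq 0$, combined with positivity, fixes a domain issue the paper's proof silently passes over; positivity still holds for the extended system because the $Z_{i}$-equations do not involve $\phi_{f}$. To make that fix airtight, modify the $G_{i}$ the same way, since their growth bound is only assumed on $\mathcal{R} \times \mathcal{R}$: for instance, compose everything with $z_{i} \mapsto \max\{z_{i}, \eta\}$, where $\eta = \min\{Z_{1,0}, Z_{2,0}, z_{b}\}$ is a level that \eqref{Zt} shows $Z_{i,t}$ never drops below.
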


\begin{proof}
The proof follows by direct application of Theorem~\ref{eu}. The local Lipschitz property holds since the new coupled terms are linear, and the diffusion coefficients are locally Lipschitz by hypothesis.

It remains to verify the growth condition. Arguing as in the proof of Theorem~\ref{EUICC}, the additional coupling terms appearing in the equations for $Y_{1, t}$ and $Y_{2, t}$ give rise to the products
\begin{equation*}
\tau\varepsilon c\left(k_{1}X_{1, t}Y_{1, t} - k_{1}X_{2, t}Y_{1, t} + k_{2}X_{2, t}Y_{2, t} - k_{2}X_{1, t}Y_{2, t}\right),
\end{equation*}
\noindent which can be controlled by means of Young's inequality \eqref{ine}. Therefore, for some constant $K^{\ast} > 0$, these extra terms are bounded by $K^{\ast}\left(1 + \left\|\Xi_{t}\right\|^{2}\right)$, and the monotone condition follows together with the assumed growth bound on the diffusion terms. Hence, Theorem~\ref{eu} yields the existence and uniqueness of a strong solution to \eqref{S6d}.
\end{proof}

\begin{remark}
Existence and uniqueness also hold for the linear stochastic system \eqref{lineal6d}. Indeed, the assumptions of Theorem~\ref{EU6D} are fulfilled since its drift and diffusion coefficients are locally Lipschitz, and the diffusion terms satisfy a global linear growth condition
\begin{equation*}
K = \max\left\{\sigma_{1}^{2} + \frac{3}{2}\varepsilon\sigma_{3}^{2} + 6\varepsilon\sigma_{5}^{2}, \sigma_{2}^{2} + \frac{3}{2}\varepsilon\sigma_{4}^{2}\right\},
\end{equation*}
\end{remark}

Finally, we turn to the asymptotic behaviour of the stochastic coupled system. Since the deterministic coupled problem \eqref{6d} admits at least one equilibrium point for every value of the coupling parameter $c$, it is natural to formulate an asymptotic result describing the behaviour of the stochastic trajectories relative to a deterministic equilibrium.

\begin{theorem}
Assume that the problem \eqref{6d} has a unique globally asymptotically stable equilibrium $\Xi^{\ast}$ and functions $G_{1}$, $G_{2}$, $G_{3}$ and $G_{4}$ of the stochastic model \eqref{S6d} satisfy the following global linear growth bounds:
\begin{equation*}
\frac{1}{2}G_{1}\left(\Xi\right)^{2} + \frac{1}{2}\varepsilon G_{2}\left(\Xi\right)^{2} + \frac{1}{2}G_{3}\left(\Xi\right)^{2} + \frac{1}{2}\varepsilon G_{4}\left(\Xi\right)^{2} \leq K\left(1 + \left\|\Xi\right\|^{2}\right),
\end{equation*}
\noindent for all $\Xi \in \mathcal{R} \times \mathcal{R}$, with
\begin{equation*}
K < \frac{\tau\varepsilon}{2}\min\left\{- k_{1}a_{1}, - k_{2}a_{1}, \frac{1}{\tau_{z}}\right\}.
\end{equation*}

Then, for any given initial condition $\Xi_{0} \in \mathcal{R} \times \mathcal{R}$, there exists a positive constant $C$ such that the solution $\Xi_{t}$ of model \eqref{S6d} satisfies
\begin{equation}
\limsup_{t \to \infty}\frac{1}{t}\mathbb{E}\left[\int_{0}^{t}\left\|\Xi_{s} - \Xi^{\ast}\right\|^{2}\mathrm{d}s\right] \leq C.
\label{condi6d}
\end{equation}
\end{theorem}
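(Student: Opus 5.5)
We mimic the proof of Theorem~\ref{th}, using a weighted quadratic Lyapunov function centred at the deterministic equilibrium $\Xi^{\ast} = (x_{1}^{\ast}, y_{1}^{\ast}, z_{1}^{\ast}, x_{2}^{\ast}, y_{2}^{\ast}, z_{2}^{\ast})^{\top}$:
\begin{equation*}
V(\Xi_{t}) = \sum_{i = 1}^{2}\left(\frac{1}{2}\epsilon_{x_{i}, t}^{2} + \frac{\alpha_{i}}{2}\epsilon_{y_{i}, t}^{2} + \frac{1}{2}\epsilon_{z_{i}, t}^{2}\right),
\end{equation*}
where $\epsilon_{x_{i}, t} = X_{i, t} - x_{i}^{\ast}$, and similarly for $\epsilon_{y_{i}, t}$ and $\epsilon_{z_{i}, t}$. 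We take $\alpha_{i} = -\frac{b_{0}}{\varepsilon k_{i}a_{0}}$, so that the cross terms $\epsilon_{x_{i}}\epsilon_{y_{i}}$ coming from $b_{0}Y_{i}$ and $a_{0}X_{i}$ cancel, exactly as the choice $D = 0$ did in Theorem~\ref{th}.

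By Itô's formula \eqref{Ito}, $\mathrm{d}V = LV\,\mathrm{d}t + \text{martingale terms}$. We subtract the equilibrium relations $f_{j}(\Xi^{\ast}) = 0$, which now include the coupling terms $c(x_{1}^{\ast} - x_{2}^{\ast})$. Then $LV$ splits into two parts. The first consists of two copies of the single-cell expression \eqref{LV1}: parabolic coefficients $A(X_{i, t})$, $B_{i} = \alpha_{i}\tau\varepsilon k_{i}a_{1} = \tau b_{0}a_{1}/a_{0} \cdot(-1)$, $C = -\tau\varepsilon/\tau_{z}$, and the mixed terms $E$ and $F$. The second part is new:
\begin{equation*}
\alpha_{i}\tau\varepsilon k_{i}c\,\epsilon_{y_{i}}(\epsilon_{x_{i}} - \epsilon_{x_{j}}) = -\frac{\tau b_{0}c}{a_{0}}\,\epsilon_{y_{i}}(\epsilon_{x_{i}} - \epsilon_{x_{j}}).
\end{equation*}
We handle $E$ and $F$ by the mean value theorem and \eqref{ine} with parameters $\delta_{1}$ and $\delta_{2}$, as in \eqref{eqE}--\eqref{eqF}. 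The noise part $\frac{1}{2}G_{1}^{2} + \frac{\alpha_{1}}{2}\varepsilon G_{2}^{2} + \cdots$ is bounded by $2\beta K(1 + \|\Xi_{t} - \Xi^{\ast}\|^{2} + \|\Xi^{\ast}\|^{2})$, where $\beta = \max\{1, \alpha_{1}, \alpha_{2}\}$, as in \eqref{eqG}.

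For the coupling cross terms we use \eqref{ine} with a small weight on the $y$ side: $|\epsilon_{y_{i}}\epsilon_{x_{j}}| \leq \frac{\delta_{4}}{2}\epsilon_{y_{i}}^{2} + \frac{1}{2\delta_{4}}\epsilon_{x_{j}}^{2}$. The $y$-contribution is absorbed into the negative coefficient $\tilde{B}_{i}$. The $x$-contribution is merely a positive constant added to the quadratic coefficients of $\epsilon_{x_{i}}^{2}$. Those coefficients are the downward parabolas $\tilde{A}(X_{i, t})$, whose leading term is $-\tau X_{i}^{2}$, so they remain parabolas bounded above and tending to $-\infty$.

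We then repeat the splitting $\tilde{A} = \tilde{A}^{+} + \tilde{A}^{-}$ from the proof of Theorem~\ref{th} for each cell. On a compact interval the term $\tilde{A}^{+}\epsilon_{x_{i}}^{2}$ contributes a bounded constant, and off it $\tilde{A}^{-} \leq -L_{3} < 0$. The smallness assumption on $K$ guarantees that $\tilde{C} < 0$ after choosing $\delta_{1}$ and $\delta_{2}$ small, and that $\tilde{B}_{i} < 0$ after choosing $\delta_{4}$ small. This yields
\begin{equation*}
LV \leq C_{2}\|\Xi_{t} - \Xi^{\ast}\|^{2} + C_{1}, \qquad C_{2} < 0.
\end{equation*}
Integrating, taking expectations so that the Itô integrals vanish (they are martingales because $\Xi_{t} \in \mathcal{M}^{2}$ by Theorem~\ref{EU6D} and the noise coefficients have linear growth), dividing by $t$ and letting $t \to \infty$ gives \eqref{condi6d} with $C = C_{1}/(-C_{2})$.

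The main obstacle is reconciling the stated threshold $K < \frac{\tau\varepsilon}{2}\min\{-k_{i}a_{1}, 1/\tau_{z}\}$ with the weights $\alpha_{i}$. With the choice above, the $y$-dissipation is $\tau b_{0}a_{1}/a_{0}$ and the noise weight is $\beta$, so the condition appears in a different normalisation. To match the statement I would instead try $\alpha_{i} = 1$. The cross terms $\tau(b_{0} + \varepsilon k_{i}(a_{0} + c))\epsilon_{x_{i}}\epsilon_{y_{i}}$ are then no longer cancelled. I would bound them by Young's inequality, putting the large weight on $\epsilon_{x_{i}}^{2}$ (absorbed by the parabola) and an arbitrarily small weight on $\epsilon_{y_{i}}^{2}$. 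The $y$-coefficient is then $\tau\varepsilon k_{i}a_{1} + 2K + \text{small}$, which is negative exactly under the stated hypothesis, and the $z$-condition is identical.
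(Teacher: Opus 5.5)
Your final route is exactly the paper's proof: take $\alpha_i = 1$, i.e.\ $\hat V = \frac{1}{2}\|\Xi_t - \Xi^{\ast}\|^2$; absorb the uncancelled terms $\hat D_i\epsilon_{x_i,t}\epsilon_{y_i,t}$ and the coupling terms $\epsilon_{x_j,t}\epsilon_{y_i,t}$ by Young's inequality, with the large weight on $\epsilon_{x_i,t}^2$ (swallowed by the concave parabola) and a small weight on $\epsilon_{y_i,t}^2$; then split the parabola as in Theorem~\ref{th}. Your diagnosis of the weighted attempt is also right: there the noise bound is multiplied by $\beta = \max\{1,\alpha_1,\alpha_2\}$, which is of order $1/\varepsilon$ and spoils the $z$-condition $K < \tau\varepsilon/(2\tau_z)$, so the switch is necessary. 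One small correction: $\Xi_t \in \mathcal{M}^2$ alone does not make $\int_0^t \epsilon_{x_1,s}G_1(\Xi_s)\,\mathrm{d}W_s^1$ a true martingale, because under linear growth of $G_1$ the usual square-integrability criterion involves fourth moments of $\Xi_s$; either invoke a fourth-moment bound or localize with stopping times and pass to the limit by monotone convergence, a step the paper also leaves implicit.
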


\begin{proof}
\noindent Let us define
\begin{equation*}
\hat{V}\left(\Xi_{t}\right) = \frac{1}{2}\left\|\Xi_{t} - \Xi^{\ast}\right\|^{2}.
\end{equation*}

\noindent In contrast to Theorem~\ref{th}, we do not introduce a weighted coefficient $\alpha$ in front of the recovery variables, since it may happen that $a_{0} + c = 0$, and we prefer not to distinguish between different cases.

\noindent Denoting
\begin{equation*}
\Xi^{\ast} = \left(x_{1}^{\ast}, y_{1}^{\ast}, z_{1}^{\ast}, x_{2}^{\ast}, y_{2}^{\ast}, z_{2}^{\ast}\right)^{\top},
\end{equation*}
\noindent we write
\begin{equation*}
\begin{array}{lll}
\displaystyle \epsilon_{x_{1}, t} = X_{1, t} - x_{1}^{\ast}, & \displaystyle \epsilon_{y_{1}, t} = Y_{1, t} - y_{1}^{\ast}, & \displaystyle \epsilon_{z_{1}, t} = Z_{1, t} - z_{1}^{\ast},\\
\displaystyle \epsilon_{x_{2}, t} = X_{2, t} - x_{2}^{\ast}, & \displaystyle \epsilon_{y_{2}, t} = Y_{2, t} - y_{2}^{\ast}, & \displaystyle \epsilon_{z_{2}, t} = Z_{2, t} - z_{2}^{\ast}.
\end{array}
\end{equation*}

\noindent Repeating the same procedure as in the proof of Theorem~\ref{th}, and applying It{\^o}'s formula \eqref{Ito} to $\hat{V}\left(\Xi_{t}\right)$, we obtain an expression of the form
\begin{equation}
\mathrm{d}\hat{V} = L\hat{V}\mathrm{d}t + \epsilon_{x_{1}, t}G_{1}\left(\Xi_{t}\right)\mathrm{d}W_{t}^{1} + \epsilon_{y_{1}, t}\sqrt{\varepsilon}G_{2}\left(\Xi_{t}\right)\mathrm{d}W_{t}^{2} + \epsilon_{x_{2}, t}G_{3}\left(\Xi_{t}\right)\mathrm{d}W_{t}^{3} + \epsilon_{y_{2}, t}\sqrt{\varepsilon}G_{4}\left(\Xi_{t}\right)\mathrm{d}W_{t}^{4},
\label{dhatV}
\end{equation}
\noindent with
\begin{equation*}
\begin{array}{l}
\displaystyle L\hat{V}\left(\Xi_{t}\right) = \sum_{i = 1}^{2}\left(\hat{A}_{i}\left(X_{i, t}\right)\epsilon_{x_{i}, t}^{2} + \hat{B}_{i}\epsilon_{y_{i}, t}^{2} + \hat{C}_{i}\epsilon_{z_{i}, t}^{2} + \hat{D}_{i}\epsilon_{x_{i}, t}\epsilon_{y_{i}, t} + \hat{E}_{i}\left(Z_{i, t}\right)\epsilon_{x_{i}, t} + \hat{F}_{i}\left(X_{i, t}\right)\epsilon_{z_{i}, t}\right) + \hat{G}\left(\Xi_{t}\right)\\
\displaystyle \mathrel{\phantom{L\hat{V}\left(\Xi_{t}\right) = }} + \sum_{\substack{i, j = 1\\ i \neq j}}^{2}\hat{H}_{i}\epsilon_{x_{j}, t}\epsilon_{y_{i}, t},
\end{array}
\end{equation*}
\noindent where each coefficient is the counterpart of the ones of Theorem~\ref{th}, that is, for $i = 1, 2$,
\begin{itemize}
\item $\hat{A}_{i}\left(X_{i, t}\right) = \tau\left(b_{1} - X_{i, t}^{2} - X_{i, t}x_{i}^{\ast} - \left(x_{i}^{\ast}\right)^{2}\right)$.

\item $\hat{B}_{i} = \tau\varepsilon k_{i}a_{1}$.

\item $\hat{C}_{i} = - \frac{\tau\varepsilon}{\tau_{z}}$.

\item $\hat{D}_{i} = \tau\left(b_{0} + \varepsilon k_{i}\left(a_{0} + c\right)\right)$.

\item $\hat{E}_{i}\left(Z_{i, t}\right) = - \tau\left(\phi_{f}\left(Z_{i, t}\right) - \phi_{f}\left(z_{i}^{\ast}\right)\right)$.

\item $\hat{F}_{i}\left(X_{i, t}\right) = \tau\varepsilon\left(\phi_{r}\left(X_{i, t}\right) - \phi_{r}\left(x_{i}^{\ast}\right)\right)$.

\item $\hat{G}\left(\Xi_{t}\right) = \frac{1}{2}G_{1}\left(\Xi_{t}\right)^{2} + \frac{1}{2}\varepsilon G_{2}\left(\Xi_{t}\right)^{2} + \frac{1}{2}G_{3}\left(\Xi_{t}\right)^{2} + \frac{1}{2}\varepsilon G_{4}\left(\Xi_{t}\right)^{2}$.

\item $\hat{H}_{i} = - \tau\varepsilon k_{i}c$.
\end{itemize}

\noindent Using Young's inequality \eqref{ine} exactly as in Theorem~\ref{th}, all mixed terms can be estimated in the same way. The only additional contributions with respect to the three-dimensional case are those arising from the coupling, namely
\begin{equation*}
\hat{D}_{1}\varepsilon_{x_{1}, t}\varepsilon_{y_{1}, t} + \hat{D}_{2}\varepsilon_{x_{2}, t}\varepsilon_{y_{2}, t} + \hat{H}_{1}\varepsilon_{x_{2}, t}\varepsilon_{y_{1}, t} + \hat{H}_{2}\varepsilon_{x_{1}, t}\varepsilon_{y_{2}, t}.
\end{equation*}

\noindent Applying again Young's inequality \eqref{ine} to these terms, and using the dissipativity provided by $a_{1}$ in the equations of $Y_{1}$ and $Y_{2}$, and by $- \frac{1}{\tau_{z}}$ in the equations of $Z_{1}$ and $Z_{2}$, we obtain negative coefficients in the terms $\varepsilon_{y_{i}, t}^{2}$ and $\varepsilon_{z_{i}, t}^{2}$, $i = 1, 2$, provided that $K$ satisfies
\begin{equation*}
K < \frac{\tau\varepsilon}{2}\min\left\{- k_{1}a_{1}, - k_{2}a_{1}, \frac{1}{\tau_{z}}\right\}.
\end{equation*}

\noindent Therefore, proceeding exactly as in the proof of Theorem~\ref{th}, we conclude that the only potentially nonnegative contributions are the parabolic terms in $\varepsilon_{x_{1}, t}$ and $\varepsilon_{x_{2}, t}$. Since these are concave parabolas, each of them can be decomposed into a positive part and a negative part. Repeating the same argument as in Theorem~\ref{th}, this yields an estimate of the form
\begin{equation*}
L\hat{V} \leq C_{2}\left\|\Xi_{t} - \Xi^{\ast}\right\|^{2} + C_{1},
\end{equation*}
\noindent where $C_{1} \geq 0$ and $C_{2} < 0$.

\noindent Substituting the above inequality into \eqref{dhatV}, integrating between $0$ and $t$, taking expectations, and proceeding exactly as in Theorem~\ref{th}, we finally obtain
\begin{equation*}
\limsup_{t \to \infty}\frac{1}{t}\mathbb{E}\left[\int_{0}^{t}\left\|\Xi_{s} - \Xi^{\ast}\right\|^{2}\mathrm{d}s\right] \leq C,
\end{equation*}
\noindent which proves \eqref{condi6d}.
\end{proof}

\begin{remark}\label{remfin}
As in Remark~\ref{remark2}, the ideal situation would be that, whenever the equilibrium point is stable in the deterministic setting, the constant in \eqref{condi6d} were equal to zero. As in the three-dimensional case, the estimate obtained here shows that the deviation with respect to the equilibrium is governed by the corresponding parabolic terms, which now depend on both $x_{1}^{\ast}$ and $x_{2}^{\ast}$.

As will be illustrated in the numerical section, in the regime of total oscillation death, where the deterministic solution converges globally to the equilibrium, the stochastic trajectories may still exhibit occasional jumps. However, consistently with Remark~\ref{remark2}, such events become less likely as $\left|x_{i}^{\ast}\right|$, $i = 1, 2$, increases and as the noise intensity $K$ decreases.
\end{remark}

\subsection{Numerical simulations of the stochastic coupled model}
We proceed as in Section~\ref{sec4}. Here we illustrate numerically the qualitative behaviours of the stochastic coupled ICC system by studying the linear stochastic model \eqref{lineal6d}. To isolate the role of each stochastic perturbation, we activate only one noise intensity at a time; that is, we take $\sigma_{i} \neq 0$, $i = 1, \dots, 5$, for a single coefficient while setting the remaining ones equal to zero.

Unless otherwise stated, the initial condition is chosen as at the beginning of this section. As in the three-dimensional case, stochastic perturbations may induce intermittent large excursions in the attractive regime, destroy the strict periodicity of deterministic oscillations, and generate transitions between different dynamical patterns.

\subsubsection{Total oscillation death}
As discussed in Remark~\ref{remfin}, even in the regime of total oscillation death stochastic perturbations may induce occasional jumps. In particular, a realization of the Wiener process may drive the solution away from the stable equilibrium, producing a large excursion before returning to its neighbourhood.

We first illustrate the smallest values of the noise intensities for which jumps are observed within the time interval $\left[0, T\right]$. Figure~\ref{Aspike} shows the corresponding simulations when a single noise component is activated at a time. As in the three-dimensional case, the trajectories remain close to the equilibrium for long intervals but sporadically perform large excursions.

These values correspond to the smallest noise intensities for which a jump is observed under the fixed realizations of the Wiener processes described above. Depending on the realization and on the observation window, jumps may occur earlier, later, or not at all.

\begin{figure}[ht]
\centering
\begin{subfigure}[ht]{0.45\textwidth}
\centering
\includegraphics[scale=0.28]{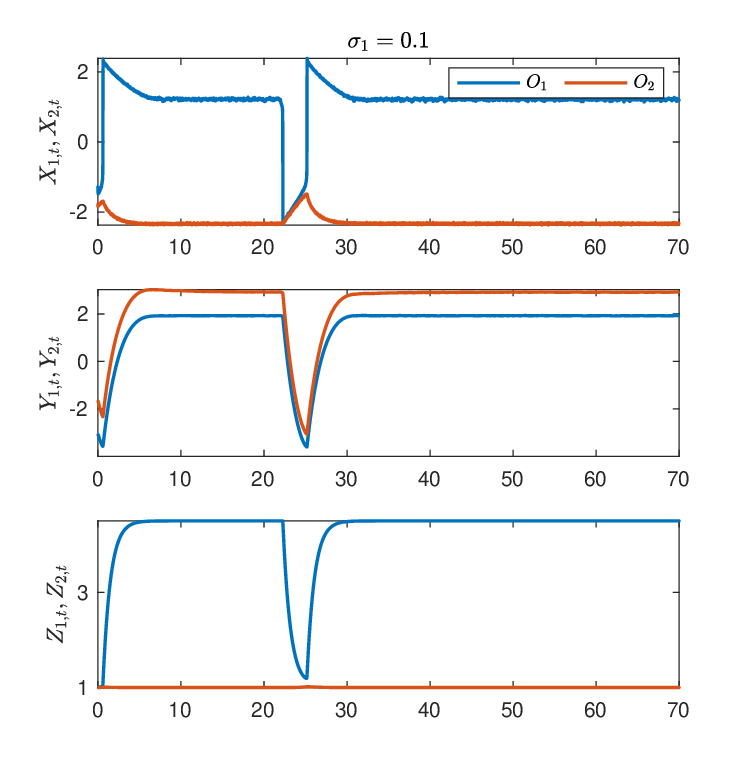}
\caption{$\sigma_{1} = 0.1$, $\sigma_{j} = 0$ for $j \neq 1$.}
\end{subfigure}
\hfill
\begin{subfigure}[ht]{0.45\textwidth}
\centering
\includegraphics[scale=0.28]{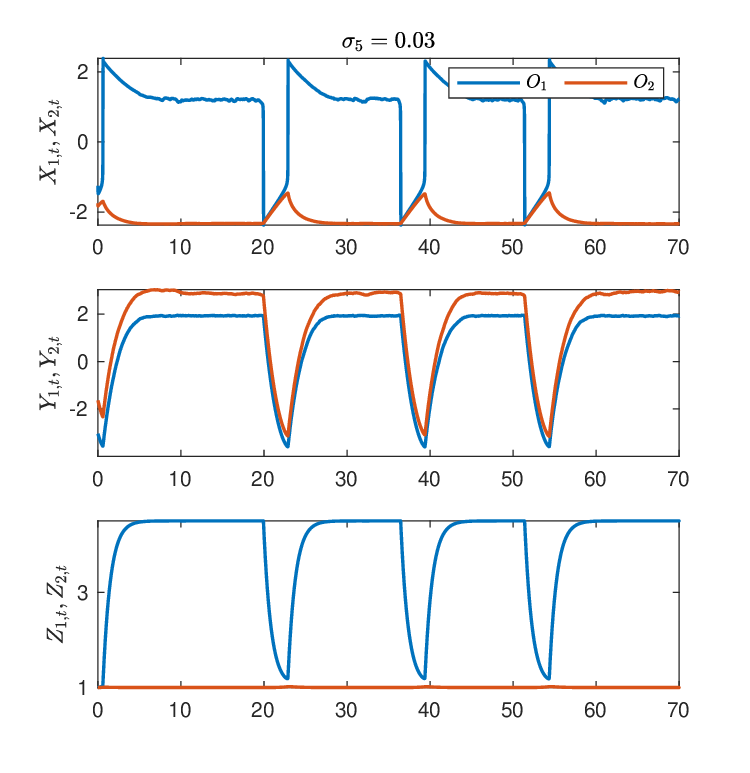}
\caption{$\sigma_{5} = 0.03$, $\sigma_{j} = 0$ for $j \neq 5$.}
\end{subfigure}
\caption{Noise-induced jumps in the total oscillation death regime for model \eqref{lineal6d}.}\label{Aspike}
\end{figure}

To illustrate the effect of the noise level, we double these values in Figure~\ref{Aspikes}. In this case, jumps occur more frequently and the intermittent character of the dynamics becomes more pronounced.

\begin{figure}[ht]
\centering
\begin{subfigure}[ht]{0.45\textwidth}
\centering
\includegraphics[scale=0.28]{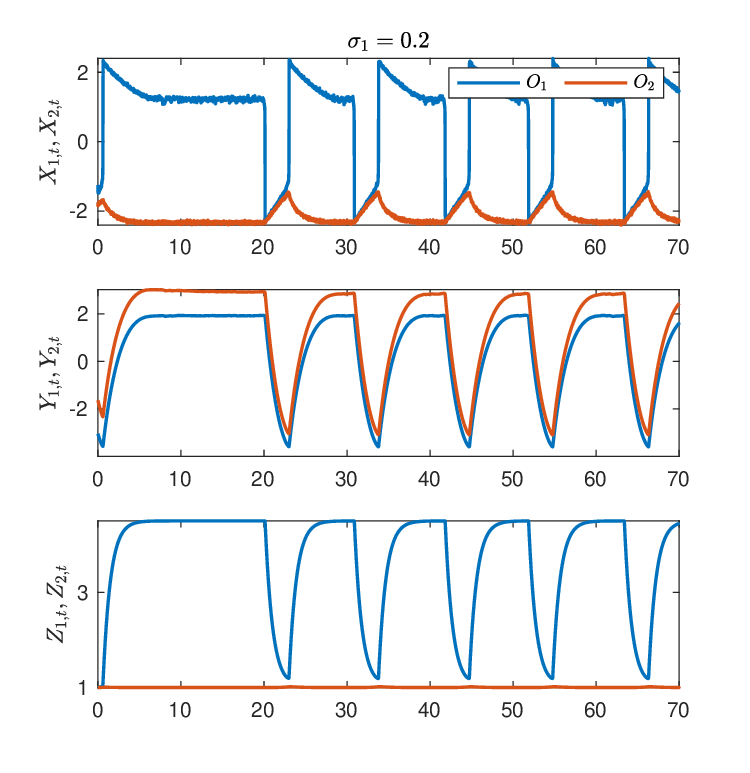}
\caption{$\sigma_{1} = 0.2$, $\sigma_{j} = 0$ for $j \neq 1$.}
\end{subfigure}
\hfill
\begin{subfigure}[ht]{0.45\textwidth}
\centering
\includegraphics[scale=0.28]{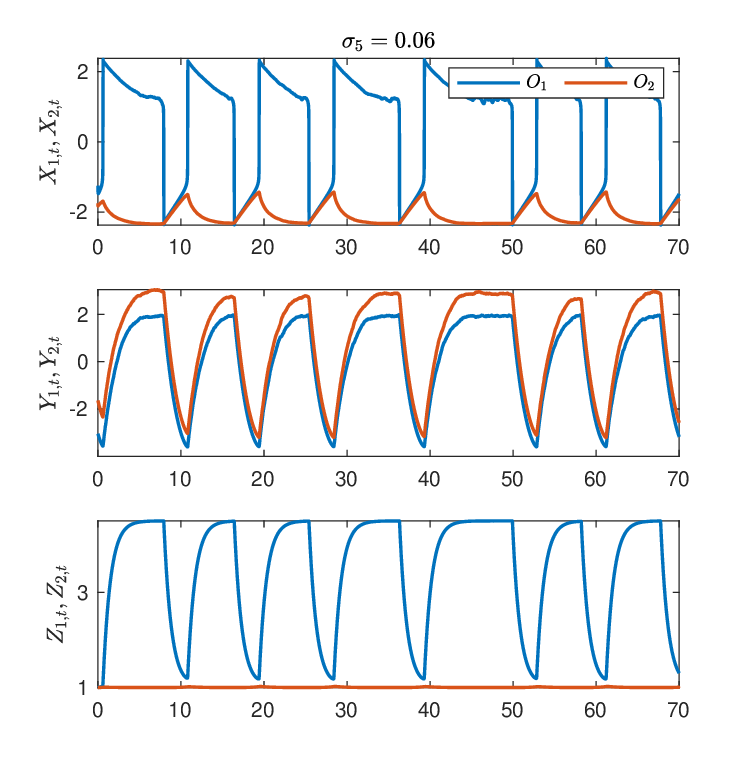}
\caption{$\sigma_{5} = 0.06$, $\sigma_{j} = 0$ for $j \neq 5$.}
\end{subfigure}
\caption{Noise-induced jumps with doubled noise intensities.}\label{Aspikes}
\end{figure}

\subsubsection{Relaxation loss}
In the deterministic relaxation loss regime, cell $O_{1}$ always produces spikes, whereas cell $O_{2}$ exhibits only small-amplitude oscillations. In the stochastic framework, however, different behaviours may occur.

First, it may happen that the spikes are not always produced by the same cell; in particular, the cell that does not spike in the deterministic case may also jump. Second, both cells may spike simultaneously, as illustrated in Figure~\ref{B}.

This situation is related to the antiphase synchronization regime, in which the cells spike alternately. However, in the present case the dynamics is less structured, and the asymmetry observed in the deterministic framework is generally lost, so that either cell may produce the jump.

\begin{figure}[ht]
\centering
\begin{subfigure}[ht]{0.45\textwidth}
\centering
\includegraphics[scale=0.28]{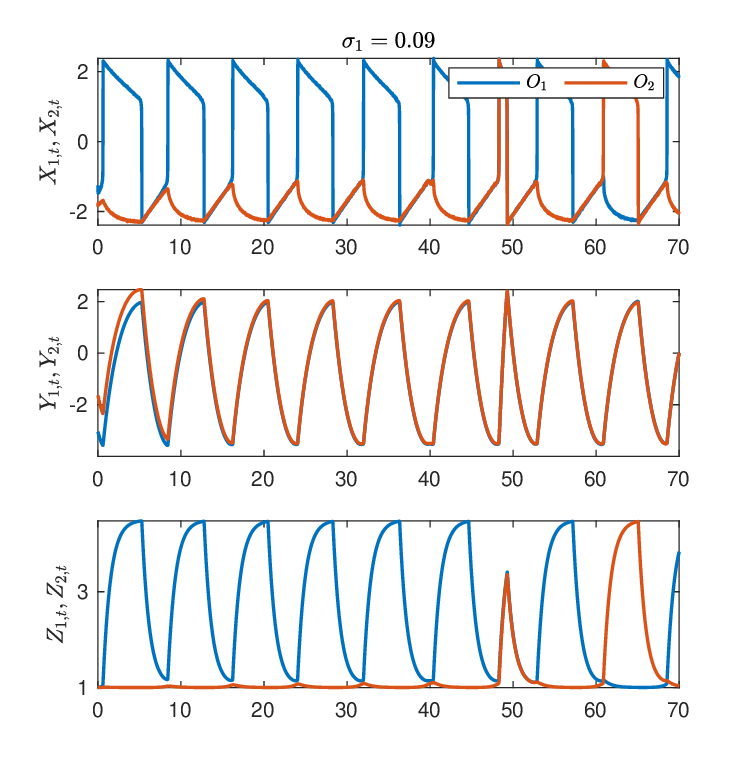}
\caption{$\sigma_{1} = 0.09$, $\sigma_{j} = 0$ for $j \neq 1$.}
\end{subfigure}
\hfill
\begin{subfigure}[ht]{0.45\textwidth}
\centering
\includegraphics[scale=0.28]{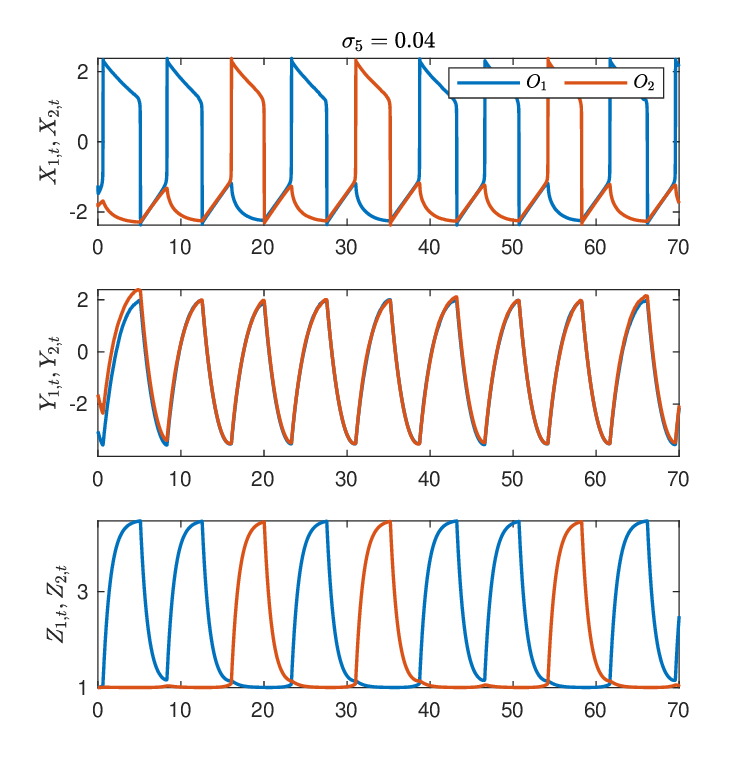}
\caption{$\sigma_{5} = 0.04$, $\sigma_{j} = 0$ for $j \neq 5$.}
\end{subfigure}
\caption{Relaxation loss regime for model \eqref{lineal6d}. In the stochastic framework, either cell may spike and both cells may spike simultaneously.}\label{B}
\end{figure}

\subsubsection{Antiphase synchronization}
To conclude with $c < 0$, we next consider the antiphase synchronization regime. In the deterministic framework, the cells spike alternately periodically and no MMOs are observed.

In the stochastic setting, this alternation is generally lost: the jumps no longer occur periodically and MMOs may appear, as in the three-dimensional case discussed in Subsection~\ref{nume}. Moreover, it may even happen that both cells spike almost simultaneously, leading to episodes that resemble in-phase behaviour. These effects are illustrated in Figure~\ref{C}.

\begin{figure}[ht]
\centering
\begin{subfigure}[ht]{0.45\textwidth}
\centering
\includegraphics[scale=0.28]{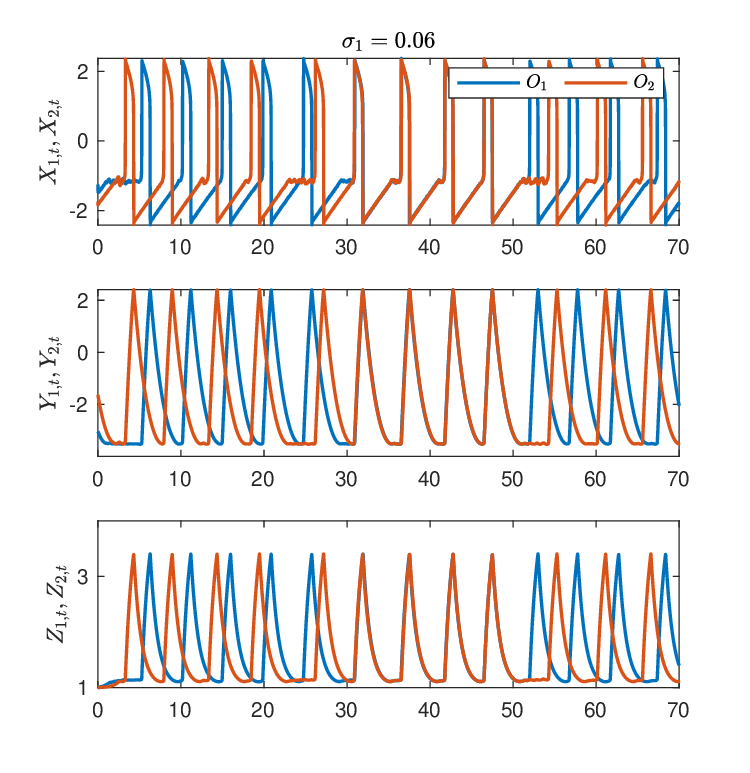}
\caption{$\sigma_{1} = 0.06$, $\sigma_{j} = 0$ for $j \neq 1$.}
\end{subfigure}
\hfill
\begin{subfigure}[ht]{0.45\textwidth}
\centering
\includegraphics[scale=0.28]{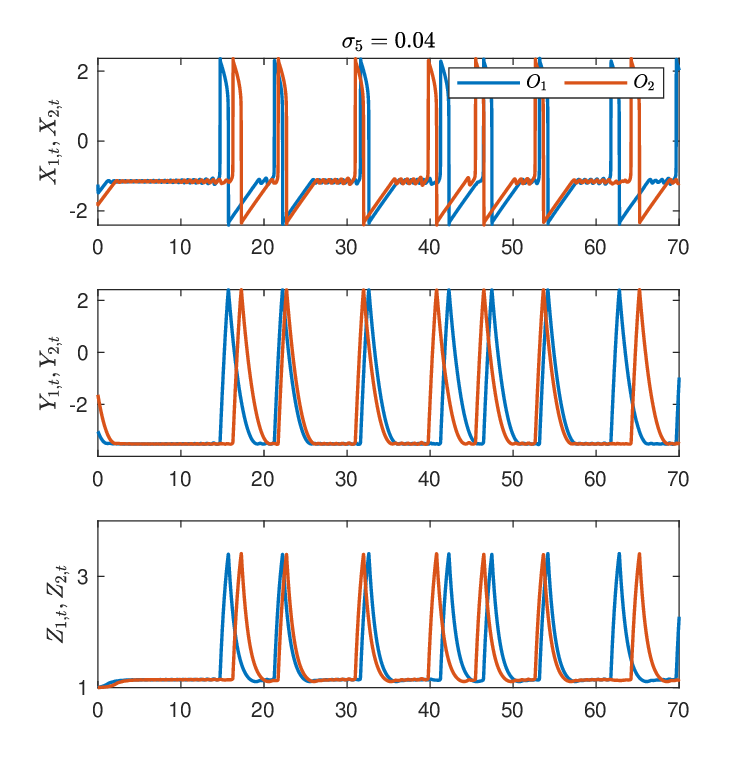}
\caption{$\sigma_{5} = 0.04$, $\sigma_{j} = 0$ for $j \neq 5$.}
\end{subfigure}
\caption{Antiphase synchronization regime for model \eqref{lineal6d}. In the stochastic framework, alternation and periodicity are lost, MMOs may appear, and in-phase episodes may occur.}\label{C}
\end{figure}

\subsubsection{Case $c \geq 0$}
We finally consider together the uncoupled case, the almost-in-phase synchronization regime and the in-phase locking synchronization regime. In the stochastic framework, the three cases tend to evolve towards in-phase behaviour. Moreover, the larger the value of $c$, the faster this synchronization is observed.

As in the three-dimensional case, when MMOs appear they are no longer periodic. This behaviour is consistent with the fact that both neurons have identical parameters, which are affected by the stochastic perturbations, as illustrated in Figure~\ref{F}.

We recall that, for different values of $k_{i}$, $i = 1, 2$, and for different values of the coupling parameter $c$, studied in \cite{Bandera2022, Bandera2026}, respectively, further dynamical behaviours may arise, some of which are already suggested by the numerical simulations presented in this section. The stochastic analysis of these cases is left for future work.

\begin{figure}[ht]
\centering
\begin{subfigure}[ht]{0.95\textwidth}
\centering
\begin{subfigure}[ht]{0.45\textwidth}
\centering
\includegraphics[scale=0.28]{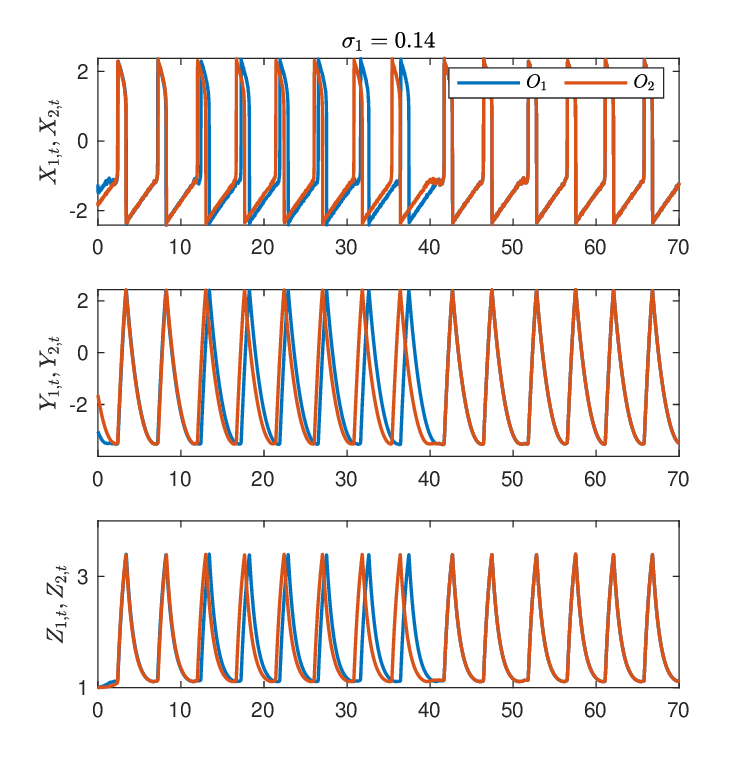}
\end{subfigure}
\hfill
\begin{subfigure}[ht]{0.45\textwidth}
\centering
\includegraphics[scale=0.28]{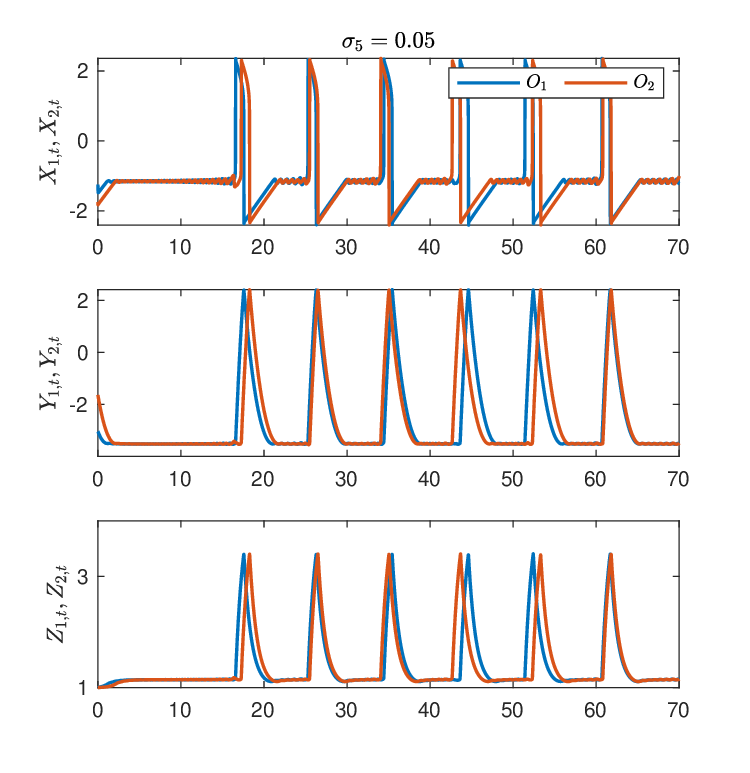}
\end{subfigure}
\caption{Uncoupled regime for model \eqref{lineal6d}. In the stochastic framework, the trajectories tend towards an in-phase behaviour. Left: $\sigma_{1} = 0.1$. Right: $\sigma_{5} = 0.06$.}
\end{subfigure}
\begin{subfigure}[ht]{0.95\textwidth}
\centering
\begin{subfigure}[ht]{0.45\textwidth}
\centering
\includegraphics[scale=0.28]{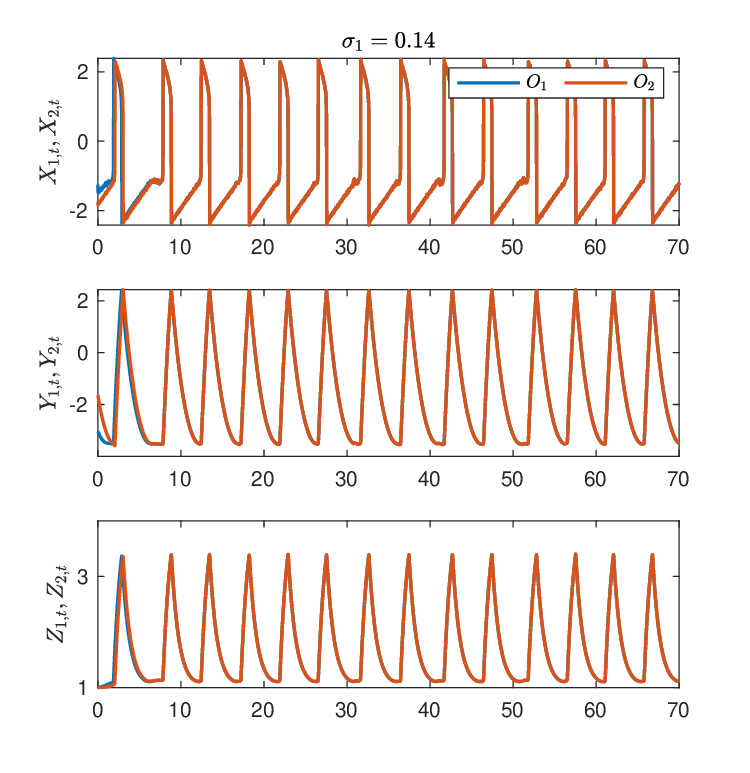}
\end{subfigure}
\hfill
\begin{subfigure}[ht]{0.45\textwidth}
\centering
\includegraphics[scale=0.28]{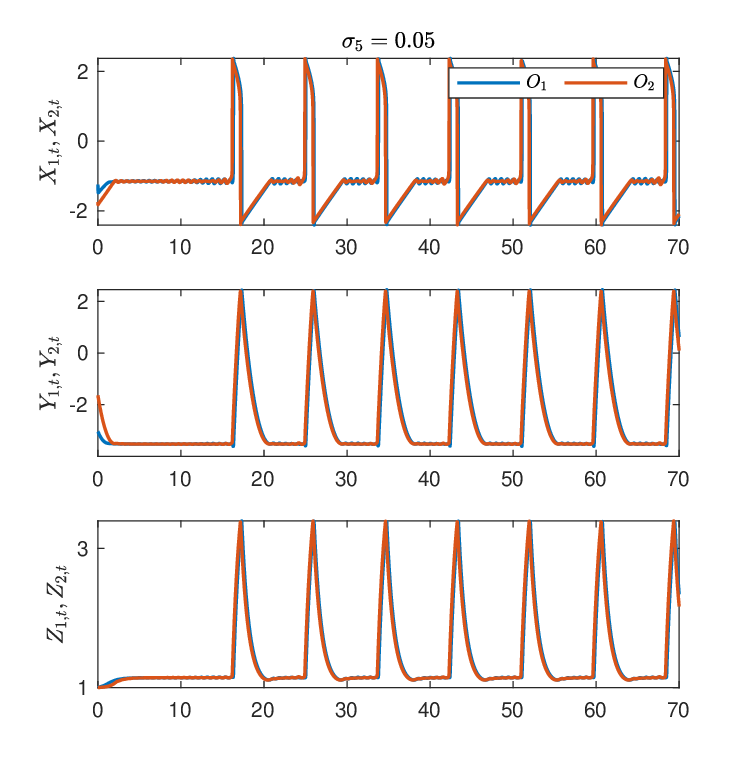}
\end{subfigure}
\caption{Almost-in-phase synchronization regime for model \eqref{lineal6d}. In the stochastic framework, the trajectories tend towards an in-phase behaviour. Left: $\sigma_{1} = 0.1$. Right: $\sigma_{5} = 0.06$.}
\end{subfigure}
\begin{subfigure}[ht]{0.95\textwidth}
\centering
\begin{subfigure}[ht]{0.45\textwidth}
\centering
\includegraphics[scale=0.28]{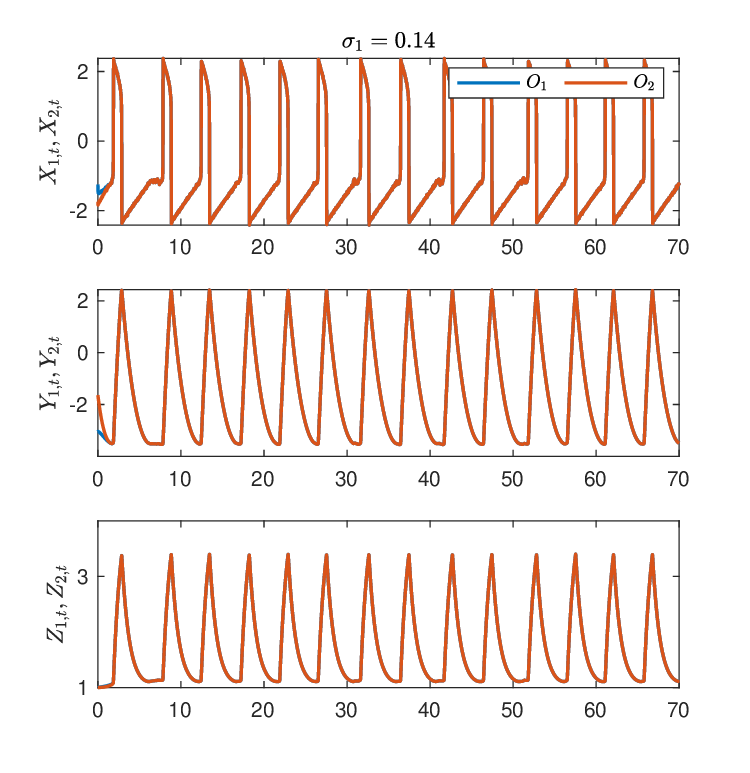}
\end{subfigure}
\hfill
\begin{subfigure}[ht]{0.45\textwidth}
\centering
\includegraphics[scale=0.28]{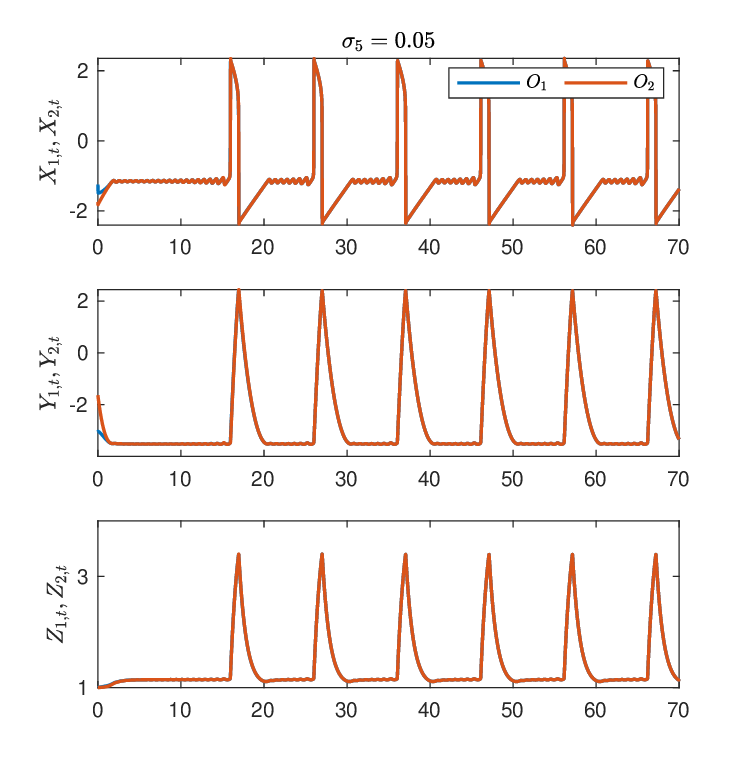}
\end{subfigure}
\caption{In-phase locking synchronization regime for model \eqref{lineal6d}. In the stochastic framework, the trajectories remain in phase. Left: $\sigma_{1} = 0.1$. Right: $\sigma_{5} = 0.06$.}
\end{subfigure}
\caption{Qualitative behaviour of model \eqref{lineal6d} for $c \geq 0$. In the stochastic framework, the dynamics consistently tends towards in-phase synchronization in all three regimes (uncoupled, almost-in-phase, and in-phase locking).}\label{F}
\end{figure}

\subsection{Numerical validation of the theoretical estimates}
To conclude this section, we proceed as in the three-dimensional case. In particular, we approximate numerically the quantity in \eqref{condi6d} using the empirical estimator \eqref{Ctn}, now considering the six-dimensional variable $\Xi_{t}$.

We focus on the total oscillation death regime, taking $c = - 0.514$ and $\mu = 2.4$, as in the previous simulations. In the deterministic framework, we obtain $\hat{C}_{T} \approx 0.095946$, which is small and consistent with the theoretical prediction that the limit in \eqref{condi6d} vanishes when the equilibrium is globally asymptotically stable.

We next perform the same computation in the stochastic framework, considering $M = 50$ independent realizations and increasing values of the noise intensities $\sigma_{1}$ and $\sigma_{5}$, with a single active noise source in each simulation. The results are shown in Figure~\ref{fig:stochC6d}.

Overall, the results are consistent with the theoretical estimate in \eqref{condi6d}, where the bound depends explicitly on the noise intensity.

\begin{figure}[ht]
\centering
\begin{subfigure}[ht]{0.3\textwidth}
\centering
\includegraphics[scale=0.32]{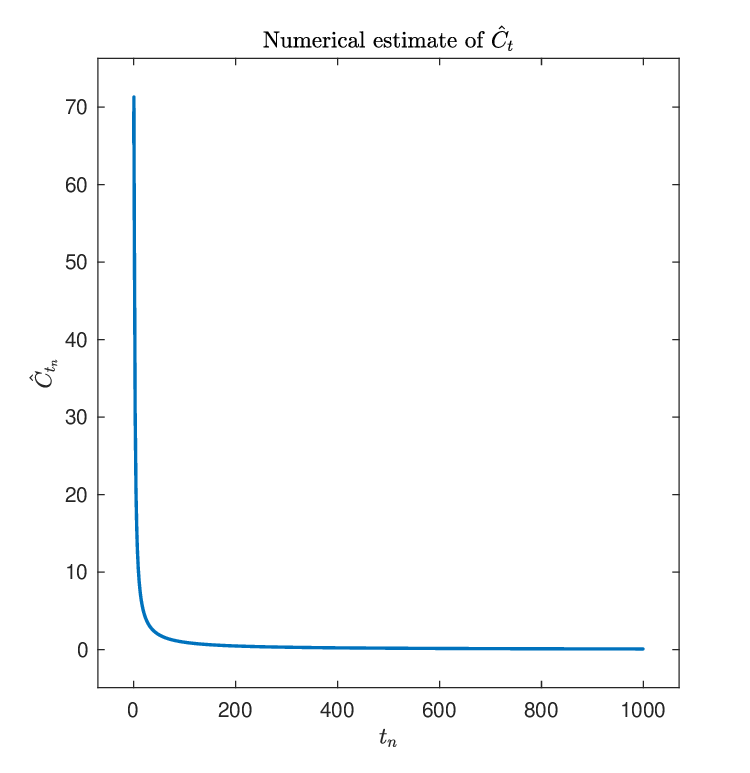}
\caption{Deterministic case.}
\end{subfigure}
\hfill
\begin{subfigure}[ht]{0.3\textwidth}
\centering
\includegraphics[scale=0.32]{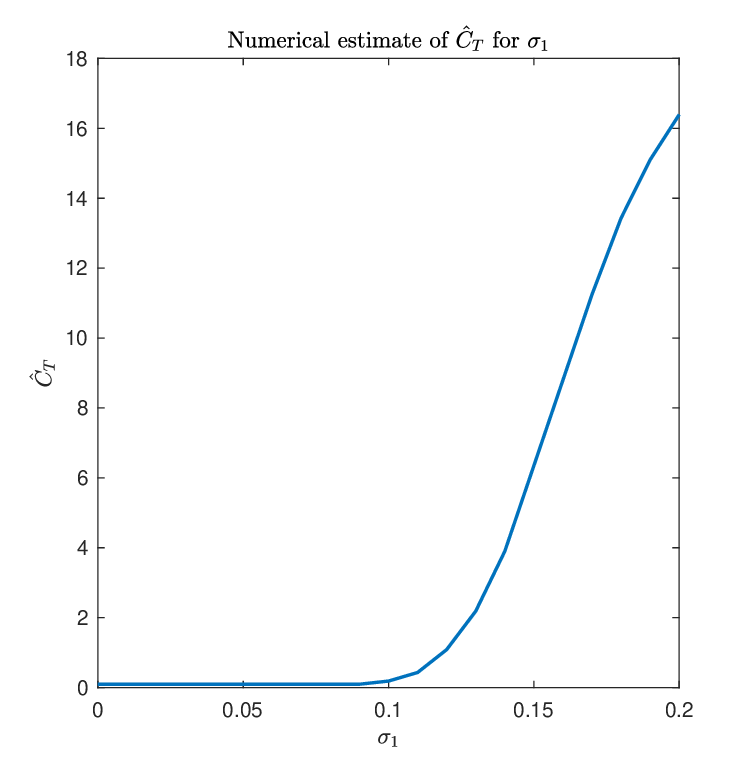}
\caption{$\sigma_{1} \in \left[0, 0.2\right]$.}
\end{subfigure}
\hfill
\begin{subfigure}[ht]{0.3\textwidth}
\centering
\includegraphics[scale=0.32]{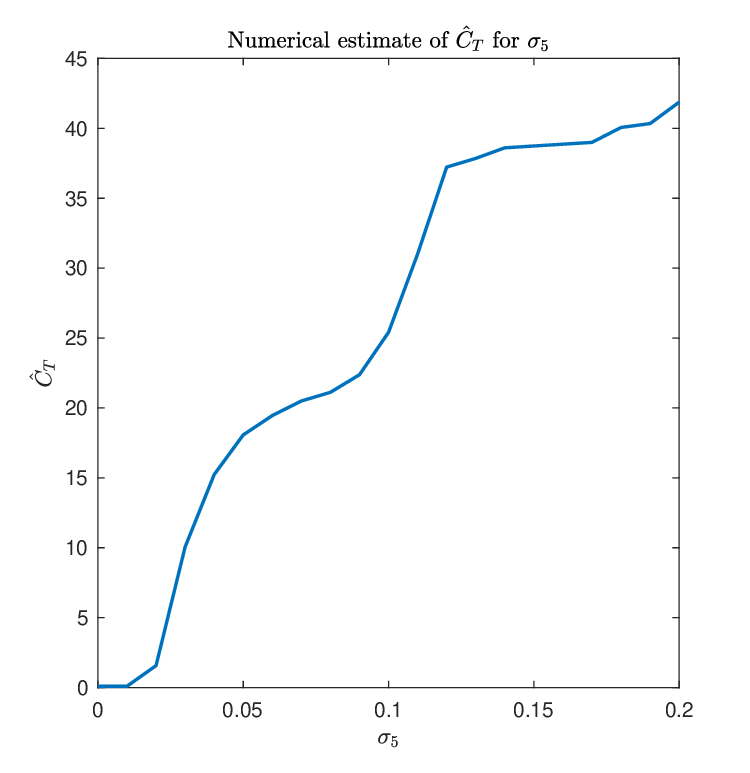}
\caption{$\sigma_{5} \in \left[0, 0.2\right]$.}
\end{subfigure}
\caption{Numerical approximation of \eqref{condi6d} for the stochastic model \eqref{lineal6d} in the total oscillation death regime, computed from $M = 50$ realizations and increasing noise intensities.}\label{fig:stochC6d}
\end{figure}

\section*{Conclusions}
In this paper, we have investigated stochastic versions of a slow-fast model for Intracellular Calcium Concentrations (ICC) based on the FitzHugh-Nagumo structure. Starting from the three-dimensional single-cell formulation, we introduced stochastic perturbations preserving the slow-fast dynamics and analysed the resulting stochastic differential equations from both theoretical and numerical points of view.

For the stochastic single-cell model, we established existence and uniqueness of solutions and proved positivity of the calcium variable. We also derived mean-square asymptotic estimates describing the behaviour of stochastic trajectories around deterministic equilibria. These estimates quantify explicitly the influence of the noise intensity and show that stochastic solutions remain close, on average, to the deterministic dynamics whenever the perturbations are sufficiently small.

The numerical simulations revealed several qualitative effects induced by noise. In the attractive regime, stochastic perturbations may generate intermittent large excursions that are absent in the deterministic framework. In the oscillatory regimes, noise destroys the strict periodicity of deterministic mixed-mode oscillations and may induce mixed-mode patterns in parameter regions where the deterministic system exhibits only relaxation oscillations.

We then extended the analysis to a coupled six-dimensional two-cell ICC model. For this stochastic coupled system, we again established positivity, existence and uniqueness of solutions, together with asymptotic estimates around deterministic equilibria. Numerical simulations showed that stochastic perturbations may alter synchronization patterns, generate transitions between dynamical regimes, and induce qualitative behaviours not present in the deterministic setting, including irregular synchronization episodes and noise-induced jumps in the total oscillation death regime.

Overall, the results obtained in this work show that stochastic perturbations play a significant role in the dynamics of ICC models, enriching the behaviour of the deterministic system while preserving its underlying slow-fast structure. Several problems remain open for future research, including the analysis of heterogeneous networks with different coupling structures, the study of larger collections of interacting cells, and the investigation of stochastic bifurcation phenomena in coupled ICC systems.

\paragraph*{Acknowledgement}
This document is the result of the research project funded by The Andalusian Government under grant PREDOC-PAID2020, The Spanish Ministerio de Ciencia e Innovaci{\'o}n, Agencia Estatal de Investigaci{\'o}n (AEI) and FEDER under projects PID2024-156228NB-I00 and PID2021-123153OB-C21, The National Natural Science Foundation of China (Grant No.~12471165), and The Generalitat Valenciana, Conselleria de Educaci{\'o}n, Cultura, Universidades y Empleo, Grant CIAICO/2024/251.

\bibliographystyle{plain}
\bibliography{references}
\end{document}